
 \documentclass{amsart}
\usepackage{amsmath,amsthm,latexsym,amssymb}
\usepackage{eucal}
\usepackage{mathrsfs}
\usepackage[all]{xy}
\xyoption{2cell}

\numberwithin{equation}{section}
\newtheorem{thm}{Theorem}[section]
\newtheorem{lem}[thm]{Lemma}
\newtheorem{prop}[thm]{Proposition}
\newtheorem{cor}[thm]{Corollary}
\newtheorem{conj}[thm]{Conjecture}
\theoremstyle{definition}
\newtheorem{defn}[thm]{Definition}
\theoremstyle{remark}
\newtheorem{rmk}[thm]{Remark}

\newtheorem{ex}[thm]{Example}
\newtheorem{exs}[thm]{Examples}

\newtheorem{notn}[thm]{Notation}
\newtheorem{convention}[thm]{Convention}

\newtheorem{acknowledge}[thm]{Acknowledgement}

\newcommand\Om{\Omega}

\newcommand\vp{\varphi}
\newcommand \ve{\varepsilon}

\newcommand\si{s^{-1}}

\newcommand{\cat}{\mathbf}
\newcommand{\op}{\mathcal}

\newcommand{\ob}{\operatorname{Ob}}
\newcommand{\mor}{\operatorname{Mor}}

\newcommand\egal[2]{\overset {#1}{\underset {#2}\rightrightarrows }}
\newcommand\adjunct[2]{\overset {#1}{\underset {#2}\rightleftarrows }}
\newcommand\fib{\ar @{->>} [r]} 
 
\newcommand\cof{\;\ar@{ >->}[r]} 
\newcommand\ccof{\;\ar@{ >->}[rr]} 
\newcommand\we{\ar [r]^{\sim}}
\newdir{ >}{{}*!/-10pt/\dir{>}}
\newcommand\wefib{\fib ^{\sim}} 
\newcommand\wecof{\cof^{\sim}}

\newcommand\zz {\mathbb Z}

\begin{document}

\title[Homotopic Hopf-Galois extensions]{Homotopic Hopf-Galois extensions:\\ Foundations and examples}

\author{Kathryn Hess}

\address{Institut de g\'eom\'etrie, alg\`ebre et topologie (IGAT) \\
    \'Ecole Polytechnique F\'ed\'erale de Lausanne \\
    CH-1015 Lausanne \\
    Switzerland}
\email{kathryn.hess@epfl.ch}
    
\begin{abstract} Hopf-Galois extensions of rings generalize  Galois extensions, with the coaction of a Hopf algebra replacing the action of a group.  Galois extensions with respect to a group $G$ are the Hopf-Galois extensions with respect to the dual of the group algebra of $G$.  Rognes recently defined an analogous notion of Hopf-Galois extensions in the category of structured ring spectra, motivated by the fundamental example of the unit map from the sphere spectrum to $MU$. 

This article introduces a  theory of homotopic Hopf-Galois extensions in a monoidal category with compatible model category structure that generalizes the case of structured ring spectra.  In particular, we provide explicit examples of homotopic Hopf-Galois extensions in various categories of interest to topologists, showing that, for example, a principal fibration of simplicial monoids is a homotopic Hopf-Galois extension in the category of simplicial sets.  We also investigate the relation of homotopic Hopf-Galois extensions to descent.
\end{abstract}

\date{\today}

\maketitle

\tableofcontents

\section*{Introduction}

The goal of this paper is to lay the foundations of a theory of Hopf-Galois extensions in monoidal model categories, generalizing both the classical case of rings \cite{montgomery}, \cite{schauenburg}  and its extension to ``brave new rings,'' i.e., ring spectra \cite{rognes}.  We begin by recalling the classical notion.

\begin{defn}\label{defn:hg-classical}  Let $\Bbbk$ be a commutative ring, and let $B$ be a $\Bbbk$-algebra, endowed with an augmentation $\ve: B\to \Bbbk$.  Let $\varphi: B\to A$ be a  homomorphism of $\Bbbk$-algebras. Let $H$ be a bialgebra, considered as a $B$-algebra with trivial $B$-action, i.e., the action determined by the composite $B\xrightarrow \ve \Bbbk \xrightarrow \eta H$, where $\eta$ is the unit of $H$.   

The homomorphism $\varphi$ is an \emph{$H$-Hopf-Galois extension} if $A$ admits a right $H$-coaction $\rho:A\to A\otimes H$, which is a morphism of $B$-algebras such that 
\begin{enumerate}
\item the composite
$$A\underset B\otimes A \xrightarrow {A\underset B \otimes \rho} A\underset B\otimes A\otimes H \xrightarrow{\mu \otimes H} A\otimes H,$$
where $\mu$ denotes the multiplication map of $A$ as a $B$-algebra,
and 
\item the induced map
$$B\to A^{co\, H}:=A\underset H\square \Bbbk =\{ a\in A\mid \rho(a)=a\otimes 1\}$$
\end{enumerate}
are both isomorphisms.
\end{defn}

\begin{notn}  The composite in (1) is usually denoted $\beta: A\underset B\otimes A \to A\otimes H$ and called the \emph{Galois map}, while the induced map in (2) is usually denoted $i: B\to A^{co\, H}$.
\end{notn}

\begin{ex} Let $G$ be a group.  If $\varphi: B \to A$ is a $G$-Galois extension of commutative rings, then it is a $\operatorname{Hom} (\zz [G], \zz)$-Hopf-Galois extension.
\end{ex}

\begin{ex}  Let $\Bbbk$ be a commutative ring.  Let $H$ be a Hopf algebra over $\Bbbk$ that is flat as $\Bbbk$-module, and let $A$ be a flat $\Bbbk$-algebra.  Then the trivial extension $A\to A\otimes H: a \mapsto a\otimes 1$ is an $H$-Hopf-Galois extension.
\end{ex}

For further discussion of the classical theory of Hopf-Galois extensions, we refer the reader to the article \cite{montgomery} by Montgomery in these proceedings.

In his monograph on Galois extensions of structured ring spectra \cite{rognes}, Rognes observed that the unit map from the sphere spectrum $S$ to $MU$ was a $S[BU]$-Hopf-Galois extension in a homotopical sense, where
\begin{itemize}
\item the diagonal $\Delta:BU\to BU\times BU$ induces the comultiplication $S[BU]\to S[BU]\wedge S[BU]$;
\item the Thom diagonal $ MU\to MU\wedge BU_{+}$ gives rise to the coaction of $S[BU]$ on $MU$; and
\item $\beta:MU\wedge MU \xrightarrow \simeq MU\wedge S[BU]$ is the Thom equivalence.
\end{itemize}
This article is motivated by the desire to provide a general framework in which to study such homotopic Hopf-Galois extensions.

The generalization of Hopf-Galois extensions to categories with compatible mon\-oi\-dal and model structures (Definition \ref{defn:hg-homotopic}) proceeds essentially by asking that the maps $\beta $ and $i$ be weak equivalences rather than isomorphisms and by taking the homotopy coinvariants of the coaction of $H$, rather than ordinary coinvariants.  In fact we ``categorify'' condition (2) of Definition \ref{defn:hg-classical}, promoting it to a condition on homotopy categories of modules.  As we explain in Remark \ref{rmk:tautology}, we speculate that the ``correct'' definition of homotopic Hopf-Galois extensions may require categorification of condition (1) of Definition \ref{defn:hg-classical} as well.  For the purposes of this paper, we have chosen not to do so, but further experience with this notion may lead to the consensus that one should.

The key problem that we must solve before defining homotopic Hopf-Galois extensions is to determine how to compute the homotopy coinvariants of a coaction, in particular when taking multiplicative structure into account.  Our discussion of this problem forms the heart of this paper.  

We begin in section 1 by developing a framework for studying the homotopy theory of comodules.  In particular, we provide conditions under which a category of comodules in a monoidal model category admits a reasonable model structure.  In section 2 we explain how to define homotopy coinvariants of a coaction, in terms of the homotopy theory defined in section 1, and apply the theory to a number of specific categories.  We show in particular that there is a reasonable model category structure on the category of comodules over a fixed comonoid, when the underlying category is that of simplicial sets, simplicial monoids, chain complexes over a field or chain algebras over a field.  We then give explicit formulas for the homotopy coinvariants of a coaction in each of these cases.

The definition of homotopic Hopf-Galois extensions is formulated in section 3.  We show that trivial extensions are indeed homotopic Hopf-Galois extensions under reasonable conditions and provide examples of homotopic Hopf-Galois extensions in the categories of simplicial monoids and of chain algebras.  Finally, in section 4 we initiate a study of the theory of homotopic Hopf-Galois extensions, exploring their relation to notions of (homotopic) faithful flatness and descent, within the general framework of the homotopy theory of comonoids over co-rings. 

Essential definitions and terminology concerning model categories are recalled in the appendix, where we also prove useful existence results (Theorem \ref{thm:left-ind} and Corollary \ref{cor:postnikov}) for model structures induced from right to left across adjunctions. Our discussion of the homotopy theory of comodules and of comodules over co-rings is based on these existence results.

In a follow-up to this paper, the theory of homotopic Hopf-Galois extensions, including the behavior of extensions under cobase change, extensions of commutative monoids and the proof of one direction of the Hopf-Galois correspondence, will be developed in greater depth.  Further examples, such as the categories of rational, commutative cochain algebras and of symmetric spectra, will also be treated.

\begin{notn} Let $\cat M$ be a small category, and let $A,B\in \ob \cat M$.  In these notes, the set of morphisms from $A$ to $B$ is denoted $\cat M(A,B)$.  The identity morphisms on an object $A$ will often be denoted $A$ as well.
\end{notn}

\begin {acknowledge}This project began with the masters thesis of C\'edric Bujard \cite{bujard}, supervised by the author, in which  a theory of homotopic Hopf-Galois extensions was first sketched.  The formulation of the theory presented in this paper has its roots in Bujard's thesis. 

The author would like to thank Bill Dwyer for an enlightening discussion of the appropriate definition of homotopy coinvariants and Susan Montgomery for suggesting Schauenburg's survey article \cite{schauenburg}.  The author also extends her gratitude to Andy Baker and Birgit Richter for having organized a fantastic workshop at the Banff International Research Station.  Finally, the author greatly appreciated the constructive comments of referee.
\end{acknowledge}

\section {Homotopy theory of comodules}\label{sec:cats}

We recall the definition of comonoids and of their comodules in a monoidal category.  We then provide conditions under which the category of comodules over a fixed comonoid  admits a reasonable model category structure, inherited from that of the underlying category.

\subsection{Comonoids and their comodules}

Throughout this section $(\cat M, \otimes , I)$ denotes any monoidal category. 

The following definition dualizes the familiar notion of monoids in a monoidal category.

\begin{defn}  A \emph{comonoid} in $\cat M$ is an object $C$ in $\cat M$, together with two morphisms in $\cat M$: a comultiplication map $\Delta:C\to C\otimes C$ and a counit map $\ve :C\to I$ such that $\Delta$ is coassociative and counital, i.e., the diagrams
$$\xymatrix{
C\ar [d]_{\Delta }\ar [rr]^{\Delta}&&C\otimes C\ar[d]_{\Delta\otimes C}&&C\ar[r]^\Delta\ar[d]_{\cong}&C\otimes C\ar[dl]^{C\otimes \ve}\ar[dr]_{\ve\otimes C}&C\ar[l]_{\Delta}\ar[d]^\cong\\
C\otimes C\ar [rr]^{C\otimes \Delta}&&C&&C\otimes I&&I\otimes C}$$
must commute, where the isomorphisms are the natural isomorphisms of the monoidal structure on $\cat M$.   

A comonoid $(C,\Delta , \ve)$ that is endowed with a comonoid map $\eta: I\to C$, where the comultiplication on $I$ is the natural isomorphism $I\xrightarrow \cong I\otimes I$, is said to be \emph{coaugmented}.

Let $(C,\Delta, \ve)$ and $(C',\Delta',\ve')$ be comonoids in a monoidal category $(\cat M,\otimes ,I)$. A \emph{morphism of comonoids} from $(C,\Delta, \ve)$ to $(C',\Delta',\ve')$ is a morphism $f\in \cat M(C,C')$ such that the diagrams
$$\xymatrix{
C\ar [r]^f\ar [d]_{\Delta}&C'\ar [d]_{\Delta'}&&C\ar[rr]^f\ar [dr]_{\ve}&&C''\ar [dl]^{\ve'}\\
C\otimes C\ar [r]^{f\otimes f}&C'\otimes C'&&&I
}$$ 
commute.
\end{defn}  

\begin{notn} We often abuse terminology slightly and refer to a (co)monoid simply by its underlying object in the category $\cat M$, just as we sometimes write only the underlying category when naming a monoidal category.
\end{notn}

\begin{rmk}  If $\cat M$ is a symmetric monoidal category, the category $\cat {Alg}$ of monoids in $\cat M$ is itself a monoidal category, where the multiplication on a tensor product of monoids $(A,\mu)$ and $(A',\mu')$ is given by the composite
$$(A\otimes A')\otimes (A\otimes A')\cong (A\otimes A)\otimes (A'\otimes A') \xrightarrow {\mu\otimes \mu'} A\otimes A'.$$
A comonoid in $\cat {Alg}$ is called a \emph{bimonoid} and consists of an object $H$ in $\cat M$, together with a multiplication $\mu: H\otimes H\to H$, a comultiplication $\Delta:H\to H\otimes H$, a unit $\eta:I\to H$ and a counit $\ve: H\to I$, which are appropriately compatible. Note that any bimonoid is automatically coaugmented as a comonoid, via the unit $\eta$.
\end{rmk}

\begin{defn} Let $(C,\Delta, \ve)$ be a comonoid in a monoidal category $(\cat M, \otimes, I)$.  A \emph{right $C$-comodule} in $\cat M$ is an object $M$ in $\cat M$ together with a morphism $\rho:M\to M\otimes C$ in $\cat M$, called the \emph{coaction map}, such that the diagrams
$$\xymatrix{
M\ar [d]_{\rho}\ar [rr]^{\rho}&&M\otimes C\ar[d]_{\rho\otimes C}&&M\ar [r]^{\rho}\ar [d]_\cong&M\otimes C\ar [dl]^{M\otimes \ve}\\
M\otimes C\ar [rr]^{M\otimes\Delta}&&M\otimes C\otimes C&&M\otimes I }$$
commute, where the isomorphism is the natural isomorphism of the monoidal structure on $\cat M$.

Let $(M,\rho)$ and $(M',\rho')$ be right $C$-comodules.  A \emph{morphism of right $C$-comodules} from $(M,\rho)$ to $(M',\rho')$ is a morphism $g\in \cat M(M,M')$ such that the diagram
\begin{equation}\label{eqn:modmap}
\xymatrix{M\ar [d]_{\rho}\ar [rr]^g&&M'\ar [d]_{\rho '}\\
M\otimes C\ar [rr]^{g\otimes C}&&M'\otimes C
}
\end{equation}
commutes.  The category of right $C$-comodules and their morphisms is denoted $\cat {Comod}_{C}$.
\end{defn}

\begin{rmk} The forgetful functor $U_{C}:\cat {Comod}_{C}\to \cat M$ admits a right adjoint $-\otimes C: \cat M\to \cat {Comod}_{C}$, where the action map on $X\otimes C$ is given by  
$$X\otimes \Delta:X\otimes C\to X\otimes C\otimes C.$$  
We call $X\otimes C$ the \emph{cofree right $C$-comodule} generated by $X$.
\end{rmk}

\begin{rmk}  It is an easy exercise to show that a morphism $\rho: M\to M\otimes C$  in $\cat M$ is a right $C$-coaction if and only if $\rho$ is a morphism of right $C$-comodules, with respect to the cofree coaction on $M\otimes C$.
\end{rmk}

\begin{rmk}\label{rmk:limits}  If $-\otimes C$ commutes with colimits, e.g., if $\cat M$ is a closed monoidal category, then all colimits exist in $\cat {Comod}_{C}$.  On the other hand, limits do not exist in general in $\cat {Comod}_{C}$.  Since model categories have all finite limits, in order to study the homotopy theory of comodules, we must restrict ourselves to cases in which at least finite limits exist in $\cat {Comod}_{C}$.
\end{rmk}

The category ${}_{C}\cat {Comod}$ of left comodules over a comonoid $C$ and their morphisms is defined analogously, in terms of coaction maps $\lambda:M\to C\otimes M$.    For any object $X$ of $\cat M$, the \emph {cofree left $C$-module} generated by $X$ is $C\otimes X$, endowed with the action map $\Delta\otimes X:C\otimes X\to C\otimes C\otimes X$.

\begin{defn}\label{defn:cotensor} Suppose that $\cat M$ admits equalizers.  Let $(M,\rho)$ and $(N,\lambda)$ be a right and a left $C$-comodule, respectively.  The \emph{cotensor product} $M\underset C\square N$ of $M$ and $N$ is the equalizer
$$M\underset C\square N\to M\otimes N\underset {\rho\otimes Id_{N}}{\overset {M\otimes \lambda }{\rightrightarrows}} M\otimes C\otimes N,$$
which is computed in $\cat M$.
Since this construction is clearly natural in $M$ and in $N$, there is in fact a bifunctor
$$-\underset C\square -:\cat {Comod}_{C}\times {}_{C}\cat {Comod}\to \cat M.$$
\end{defn}

\begin{rmk}\label{rmk:coinvariant}  Let $C$ be a coaugmented comonoid, with coaugmentation $\eta: I\to C$. If $N=I$, endowed with the left $C$-coaction $I\cong I\otimes I \xrightarrow {\eta\otimes I} C\otimes I$, then 
$$M\underset C\square I=\operatorname{equal}(M\underset {\rho}{\overset {M\otimes \eta }{\rightrightarrows}}M\otimes C).$$
In other words $M\underset C\square I$ can be seen as  the object of  \emph{coinvariants} of the coaction $\rho$, justifying the notation 
$$M^{co\, C}:=M\underset C\square I$$
 that we use henceforth.
A similar observation applies to $N^{co\, C}:=I\underset C\square N$ for all $(N,\lambda)\in {}_{C}\cat {Comod}$.
\end{rmk}

\begin{ex}  An easy computation shows that if $C$ is coaugmented and $X\otimes C$ is a cofree $C$-comodule, then 
$$(X\otimes C)^{co\, C}\cong X.$$
\end{ex}

Combining multiplicative and comodule structure, we obtain the theory of comodule algebras.

\begin{defn} \label{defn:comod-monoidal}Suppose that $\cat M$ is symmetric monoidal, and let $(H, \Delta, \mu, \ve, \eta)$ be a bimonoid in $\cat M$.  .  There  is a natural monoidal structure on $\cat{Comod}_{H}$ given by  $(M,\rho)\otimes (M',\rho')=(M\otimes M', \rho *\rho')$, where $\rho*\rho'$ is equal to the composite
$$M\otimes M' \xrightarrow {\rho\otimes \rho'} M\otimes H\otimes M'\otimes H \xrightarrow \cong M\otimes M'\otimes H\otimes H\xrightarrow {M\otimes M'\otimes \mu} M\otimes M'\otimes H.$$ 
The unit object is $I$, endowed with the coaction $I\cong I\otimes I \xrightarrow {I\otimes \eta }I\otimes H$.

Let $\cat{Alg}_{H}$ be the category of  monoids in $\cat{Comod}_{H}$, also known as \emph{$H$-comodule algebras}.  Note that $\cat{Alg}_{H}$ isomorphic to the category of $H$-comodules in the category $\cat{Alg}$ of monoids in $\cat M$.
\end{defn} 

\begin{rmk} Observe that $\cat{Comod}_{I}=\cat M$, while $\cat{Alg} _{I}=\cat{Alg}$.
\end{rmk}

\subsection{Model categories of comodules}

Let $\cat M$ be a model category and a monoidal category. In this section we provide conditions under which the category of comodules over a fixed comonoid in $\cat M$ admits a model category structure inherited from $\cat M$.

We recall the definition of a model category, its homotopy category and derived functors and prove a useful existence theorem for model category structure in the appendix.  We encourage the reader  with questions about the terminology and notation used throughout this paper to consult the appendix.  In particular, we make frequent use of the notions of \emph{right-} and \emph{left-induced model structures} (Definition \ref{defn:induction}). 

Given a model category $\cat M$ that is \emph{cofibrantly generated}  \cite{hovey}, there is a standard procedure for transfering model category structure from $\cat M$ to another category $\cat D$, across an adjunction
$$F:\cat M \adjunct {}{} \cat D:G,$$
where $F$ is the left member of the adjoint pair, under certain conditions on $F$ and $G$ and their relationship to the cofibrations and weak equivalences in $\cat M$ (cf., e.g.,  Lemma 2.3 in \cite{schwede-shipley}).  We cannot apply this technique, however, to transfering model category structure from $\cat M$ to  the category of comodules over a fixed comonoid $C$ in $\cat M$, since the adjoint pair at our disposal is
$$U_{C}:\cat{Comod}_{C}\adjunct {}{}\cat M: -\otimes C,$$ 
where $U_{C}$ is the forgetful functor.  The model category $\cat M$ is on the wrong side of the adjunction for the usual transfer arguments to apply.

In certain special cases it is nonetheless possible to define a model category structure on $\cat{Comod}_{C}$ that is ``inherited'' from that of $\cat M$. We now explore two such special cases. 

\subsubsection{Cartesian categories}\label{subsec:cartesian}  Let $\cat M$ be a category admitting all finite products and a terminal object $e$.  The triple $(\cat M, \times, e)$ is then a monoidal category, of the special type called a \emph{cartesian category}.    

Any object $C$ in a cartesian category $\cat M$ is naturally a comonoid, where the comultiplication is just the usual diagonal morphism $\Delta_{C}:C\to C\times C$.  Moreover, given objects $B$ and $C$ in $\cat M$, the right (or left) $C$-coactions on $B$, with respect to diagonal comultiplication on $C$, are in natural, bijective correspondence with the morphisms in $\cat M$ from $B$ to $C$.

Indeed, if $f\in \cat M(B,C)$, then the composites
$$B\xrightarrow {\Delta_{B}} B\times B \xrightarrow{ B\times f} B\times C$$ 
and 
$$B\xrightarrow {\Delta_{B}} B\times B \xrightarrow{ f\times B} C\times B$$ 
are right and left $C$-coactions on $B$.  Inversely,  if  $\rho: B\to B\times C$ is a right $C$-coaction, then the composite
$$B\xrightarrow\rho B\times C \xrightarrow {pr_{2}} C$$
is an element of $\cat M(B,C)$.  A similar construction works in the case of left $C$-coactions.

Using the universal property of the product, one can easily show that for any right $C$-coaction $\rho: B\to B\times C$,
$$\rho=(B\times pr_{2}\rho)\Delta.$$
It is also immediately obvious that 
$$pr_{2} (B\times f)\Delta =f$$
for all $f\in \cat M(B,C)$.  

Henceforth, let $C$ denote an object of the cartesian category $\cat M$, endowed with its natural diagonal comonoid structure.    The argument above shows that $\cat{Comod}_{C}$ is equivalent to $\cat M/C$, the slice category of objects in $\cat M$ over $C$.   Recall that the objects of $\cat M/C$ are the morphisms in $\cat M$ with target $C$, while a morphism from $f:A\to C$ to $g:B\to C$ is a morphism $a:A\to B$ in $\cat M$ such that $ga=f$ 

It is well known (cf., e.g., Theorem 7.6.5 in \cite {hirschhorn}) that a model category structure on  $\cat M$ gives rise to a model category structure on  $\cat M/C$, in which a morphism 
$$a: (f:A\to C)\to (g:B\to C)$$  
is a weak equivalence, fibration or cofibration  if $a:A\to B$ is a morphism of the same type in $\cat M$.  Thus, in this case, the category of comodules over $C$ does inherit a model structure from $\cat M$, that is right-induced by the forgetful functor.

Important examples of cartesian model categories include the categories of topological spaces,  of simplicial sets and of small categories.

\subsubsection {Postnikov presentations}\label{subsec:postnikov}  We now apply Corollary \ref{cor:postnikov} from the appendix to obtaining model category structure on $\cat{Comod}_{C}$ in the noncartesian case.   All the notation and terminology used below is explained in the appendix.

 The model structure described here is inspired both by the semi-free models of differential modules over differential graded algebras \cite{fht} and by the desire for fibrant replacements of comodules to be ``injective resolutions''.

\begin{thm}\label{thm:post-comod} Let $\cat M$ be endowed with both a model category structure with Postnikov presentation $(\mathsf X, \mathsf Z)$ and a monoidal structure $(\otimes, I)$.  Let $C$ be a comonoid in $\cat M$ such that $\cat {Comod}_{C}$ is finitely bicomplete, and let $U_{C}:\cat{Comod}_{C}\to \cat M$ denote the forgetful functor. Let 
$$\mathsf W=U_{C}^{-1}(\mathsf{WE}_{\cat M}) \text { and } \mathsf C=U_{C}^{-1}(\mathsf{Cof}_{\cat M}).$$   
If $\mathsf{Post}_{\mathsf Z\otimes C}\subset \mathsf W$ and for all $f\in \mor \cat {Comod}_{C}$
there exist 
\begin{enumerate}
\item [(a)] $i\in \mathsf{C} $ and $p\in \mathsf{Post}_{ \mathsf Z\otimes C}$ such
that $f=pi$; 
\smallskip
\item [(b)]$j\in \mathsf{C}\cap \mathsf{W}$ and $q\in \mathsf{Post}_{\mathsf X\otimes C}$ such that
$f=qj$,
\end{enumerate}
then    $\mathsf W$, $\mathsf C$  and $\widehat{\mathsf{Post}_{\mathsf X\otimes C}}$ are the weak equivalences, cofibrations and fibrations in a model category structure on $\cat {Comod}_{C}$, with respect to which 
$$U_{C}:\cat {Comod}_{C} \adjunct {}{}Ê\cat M: -\otimes C$$ 
is a Quillen pair.
\end{thm}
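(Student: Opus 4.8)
The plan is to derive the asserted model structure on $\cat{Comod}_C$ by invoking Corollary \ref{cor:postnikov} directly, after recognizing that the hypotheses of the theorem have been engineered precisely to match the input that corollary requires. The key structural observation is that we have an adjunction $U_C \dashv (-\otimes C)$ with the forgetful functor $U_C$ as the \emph{left} adjoint, so we are in the setting of inducing a model structure from right to left across the adjunction — exactly the situation the appendix's existence results (Theorem \ref{thm:left-ind} and Corollary \ref{cor:postnikov}) are designed to handle. The weak equivalences and cofibrations are defined by preimage under $U_C$, i.e.\ $\mathsf W = U_C^{-1}(\mathsf{WE}_{\cat M})$ and $\mathsf C = U_C^{-1}(\mathsf{Cof}_{\cat M})$, which is the left-induced prescription; the fibrations are then forced to be $\widehat{\mathsf{Post}_{\mathsf X\otimes C}}$, the retract-closure of the Postnikov class generated by applying the cofree functor $-\otimes C$ to the generating class $\mathsf X$ of the given Postnikov presentation on $\cat M$.

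First I would verify that $\cat{Comod}_C$ is finitely bicomplete, which is assumed as a hypothesis, so this is immediate. Next I would check that the candidate classes $(\mathsf X\otimes C, \mathsf Z\otimes C)$ constitute a valid Postnikov presentation relative to the adjunction, in the sense demanded by Corollary \ref{cor:postnikov}. Here the two factorization hypotheses (a) and (b) are doing the essential work: condition (a) provides, for every morphism $f$ in $\cat{Comod}_C$, a factorization $f = pi$ with $i \in \mathsf C$ a cofibration and $p \in \mathsf{Post}_{\mathsf Z\otimes C}$, while condition (b) provides a factorization $f = qj$ with $j \in \mathsf C \cap \mathsf W$ an acyclic cofibration and $q \in \mathsf{Post}_{\mathsf X\otimes C}$. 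These are exactly the two factorization axioms, phrased in the language of the Postnikov presentation, and they furnish the hypotheses of the corollary. The additional assumption $\mathsf{Post}_{\mathsf Z\otimes C}\subset \mathsf W$ guarantees the compatibility (the acyclicity of the relevant class) needed so that the lifting and two-out-of-three properties align correctly.

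The main obstacle — or rather, the point that most needs care — is confirming that the abstract output of Corollary \ref{cor:postnikov} genuinely yields a \emph{model structure} with the three classes as named, and in particular that the fibrations come out to be precisely $\widehat{\mathsf{Post}_{\mathsf X\otimes C}}$ rather than some larger or smaller class. This amounts to checking that the lifting axioms hold: every acyclic cofibration must lift against every fibration, and every cofibration against every acyclic fibration. Since the cofibrations and weak equivalences are detected by $U_C$ and the fibrations are defined via a right-lifting property against the Postnikov class, these lifting properties should follow formally from the way Postnikov presentations interact with the induction across the adjoint pair, as encoded in the corollary. I would therefore lean on the corollary to supply the lifting and factorization axioms wholesale, rather than re-deriving them.

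Finally, the Quillen pair assertion is essentially built into the construction and requires only a brief remark: because cofibrations and acyclic cofibrations in $\cat{Comod}_C$ are defined as the $U_C$-preimages of cofibrations and acyclic cofibrations in $\cat M$, the left adjoint $U_C$ preserves both classes tautologically, which is one of the equivalent characterizations of a Quillen pair. Hence $U_C \dashv (-\otimes C)$ is Quillen, completing the statement. I expect the genuinely substantive content to reside entirely in Corollary \ref{cor:postnikov} and in verifying its hypotheses; the present theorem is then a clean application, with the two factorization conditions (a) and (b) serving as the precise bridge between the comodule category and the appendix's general machinery.
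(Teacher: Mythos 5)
Your proposal is correct and matches the paper's argument exactly: the paper proves this theorem in one line, by applying Corollary \ref{cor:postnikov} to the adjunction $U_{C}:\cat{Comod}_{C}\adjunct{}{}\cat M:-\otimes C$, noting that with $G=-\otimes C$ the classes $G(\mathsf X)$ and $G(\mathsf Z)$ are precisely $\mathsf X\otimes C$ and $\mathsf Z\otimes C$, so the theorem's hypotheses are verbatim those of the corollary. Your additional remark that $U_{C}$ tautologically preserves cofibrations and acyclic cofibrations is a valid (and slightly more direct) way to see the Quillen pair claim, though it too is already part of the corollary's conclusion.
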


This theorem follows immediately from applying Corollary \ref{cor:postnikov} to the adjunction  $U_{C}:\cat {Comod}_{C}\adjunct{}{} \cat M:-\otimes C$.  We call the factorizations required in hypotheses (a) and (b) \emph{Postnikov factorizations}.

\begin{rmk} In the model category structure developed in Theorem \ref{thm:post-comod}, every $C$-comodule $M$ admits a fibrant replacement $\xymatrix@1{M\wecof &M'\fib &e\otimes C}$, built inductively as follows. There is a ordinal $\lambda$ such that  the limit of a $\lambda$-tower $...\to M'_{\beta +1}\xrightarrow {p_{\beta+1}} M'_{\beta}\to ...$ exists and is isomorphic to $M'$, where for all $\beta<\lambda$, there exist $x_{\beta+1}:X_{\beta+1}\to X_{\beta}$ in $\mathsf X$ and $f_{\beta}: U_{C}M'_{\beta}\to X_{\beta}$ in $\cat M$ such that 
$$\xymatrix{M'_{\beta +1}\ar [d]^{p_{\beta+1}}\ar [r]&X_{\beta+1}\otimes C\ar [d]^{x_{\beta +1}\otimes C}\\ M'_{\beta}\ar [r]^{f_{\beta}^\sharp}&X_{\beta}\otimes C}$$
is a pullback diagram in $\cat{Comod}_{C}$, where $f_{\beta}^\sharp$ is the transpose of $f_{\beta}$.    This is what we think of as an ``injective'' or ``semi-cofree'' resolution of $M$.
\end{rmk}

There are reasonable conditions under which one of the required types of Postnikov factorization exists.  We see in section \ref{subsec:ex-htpycoinv} examples of categories  and comonoids for which these conditions are satisfied.

\begin{lem}\label{lem:postfact}  Let $\cat M$ be endowed with both a model category structure and a monoidal structure $(\otimes, I)$.  Let $C$ be a comonoid in $\cat M$ such that $\cat{Comod}_{C}$ admits pullbacks, and let $U_{C}:\cat{Comod}_{C}\to \cat M$ denote the forgetful functor. Let $ \mathsf C=U_{C}^{-1}(\mathsf{Cof}_{\cat M})$, and let $\mathsf Z$ be a subset of $\mathsf{Fib}_{\cat M}\cap\mathsf{WE}_{\cat M}$ such that  for all $f\in \mor \cat M$,
there exist $j\in \mathsf{Cof}_{\cat M}$ and $q\in \mathsf{Post}_{ \mathsf Z}$ with $f=qj$.

If 
\begin{enumerate}
\item the $C$-coaction morphism $\rho:M\to M\otimes C$ is a cofibration in $\cat M$ for every $(M,\rho)\in \ob \cat {Comod}_{C}$,
\item $-\otimes C:\cat M\to \cat M$ preserves weak equivalences and cofibrations, 
\item for all $i:M\to N$ in $\mathsf C$ and all morphisms $g:M\to N'$ in $\cat{Comod}_{C}$, the induced map $(i,g):M\to N\times N'$ is in $\mathsf C$, and
\item $\mathsf{Post}_{ \mathsf Z}\otimes C\subset \mathsf{Post}_{ \mathsf Z\otimes C}$,
\end{enumerate}
then  for all $f\in \mor \cat {Comod}_{C}$,
there exist $i\in \mathsf{C} $ and $p\in \mathsf{Post}_{ \mathsf Z\otimes C}$ such
that $f=pi$.
\end{lem}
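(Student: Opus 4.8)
The goal is to produce, for an arbitrary morphism $f$ in $\cat{Comod}_C$, a factorization $f = pi$ with $i\in\mathsf C$ (an $f$ whose underlying map is a cofibration in $\cat M$) and $p\in\mathsf{Post}_{\mathsf Z\otimes C}$ (a Postnikov tower built from pullbacks of cofree maps $\mathsf Z\otimes C$). The natural plan is to import a factorization from $\cat M$ along the forgetful functor and then \emph{lift} it back into $\cat{Comod}_C$, using the cofree comodule functor $-\otimes C$ to carry out the lifting. First I would apply the given hypothesis to the underlying morphism $U_C f$ in $\cat M$: this yields a factorization $U_C f = q\,j$ with $j\in\mathsf{Cof}_{\cat M}$ and $q\in\mathsf{Post}_{\mathsf Z}$. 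The problem is that the intermediate object and the maps $j,q$ live in $\cat M$, not in $\cat{Comod}_C$, so I cannot directly declare $j$ to be my $i$ and $q$ to be my $p$.

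The key construction is to build the intermediate $C$-comodule by pulling back along the cofree resolution. Concretely, write the target comodule as $(N,\sigma)$ with coaction $\sigma:N\to N\otimes C$. The coaction $\sigma$ is, by Remark following Definition of the cofree comodule, a morphism of comodules into the cofree object $N\otimes C$, and hypothesis (1) guarantees that $\sigma$ is itself a cofibration in $\cat M$. The plan is to transpose the $\cat M$-level factorization into a diagram of cofree comodules: the Postnikov tower $q\in\mathsf{Post}_{\mathsf Z}$ is assembled from pullbacks of maps in $\mathsf Z$, and applying the functor $-\otimes C$ produces a tower in $\cat{Comod}_C$ assembled from pullbacks of maps in $\mathsf Z\otimes C$. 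Here hypothesis (4), namely $\mathsf{Post}_{\mathsf Z}\otimes C\subset\mathsf{Post}_{\mathsf Z\otimes C}$, is exactly what I need to know that the image of the tower is again an admissible Postnikov tower of the correct cofree type. I would then define the intermediate comodule as the pullback in $\cat{Comod}_C$ of the cofree tower $q\otimes C$ along the transpose of $\sigma$, and take $p$ to be the resulting Postnikov map down to $(N,\sigma)$; this $p$ lies in $\mathsf{Post}_{\mathsf Z\otimes C}$ by construction. Pullbacks in $\cat{Comod}_C$ exist by the standing hypothesis that $\cat{Comod}_C$ admits pullbacks.

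It remains to produce the map $i$ into this pullback and to check it lies in $\mathsf C$. The universal property of the pullback gives a canonical comodule map $i$ from the source of $f$, induced by $f$ itself together with the transposed lift $j^\sharp$ of $j$ coming from the cofree adjunction; that $p\,i=f$ follows from the compatibility of these two maps over the target. To verify $i\in\mathsf C$, i.e.\ that $U_C i$ is a cofibration in $\cat M$, I would analyze $U_C i$ as a map into $U_C$ of the pullback. The forgetful functor need not preserve the pullback on the nose, so the real work is to express $U_C i$ as built from the cofibration $j$ together with the coaction cofibration $\sigma$, and this is precisely where hypotheses (1), (2) and (3) combine: hypothesis (2) ensures $-\otimes C$ preserves cofibrations so that the cofree legs of the tower stay cofibrant, hypothesis (1) gives that $\sigma$ is a cofibration, and hypothesis (3), the statement that induced maps $(i,g):M\to N\times N'$ built from a cofibration $i\in\mathsf C$ and an arbitrary comodule map $g$ remain in $\mathsf C$, is the gluing condition that lets me conclude the map into the pullback is a cofibration.

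I expect the main obstacle to be exactly this last verification that $i\in\mathsf C$. The subtlety is bookkeeping the interaction between the $\cat M$-level factorization and the comodule-level pullback: one must show that the transpose $j^\sharp$ together with $f$ really does factor through the pullback and that the resulting map is underlyingly a cofibration, which requires carefully identifying $U_C$ of the pullback comodule and invoking hypothesis (3) in the form of a pullback-corner (equivalently, product-induced-map) argument rather than a mere composition of cofibrations. Once that identification is made, the claim that $i\in\mathsf C$ reduces to a single application of condition (3), and the proof is complete.
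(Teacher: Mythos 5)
Your overall strategy---factor in $\cat M$, transpose across the adjunction $U_{C}\dashv -\otimes C$, build the intermediate comodule by pulling back a cofree tower, and induce $i$ by the universal property---is close in spirit to the paper's argument, but the specific factorization you start from creates a gap that the stated hypotheses cannot close. You factor the underlying map $U_{C}f:U_{C}M\to U_{C}N$ as $q\circ j$ and then pull back $q\otimes C:Z\otimes C\to N\otimes C$ along the coaction $\sigma:N\to N\otimes C$. The resulting intermediate object is a genuine pullback over $N\otimes C$, not a product, whereas hypothesis (3) is stated only for products: it says that $(i,g):M\to N\times N'$ lies in $\mathsf C$ when $i\in \mathsf C$ and $g$ is an arbitrary comodule map. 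Your claim that one can invoke it ``in the form of a pullback-corner (equivalently, product-induced-map) argument'' conflates two inequivalent statements. Hypothesis (3) does show that the composite $M\to P\to N\times (Z\otimes C)$ is in $\mathsf C$, but that does not imply $M\to P$ is in $\mathsf C$: cofibrations are not left-cancellable in a general model category, and no pullback analogue of (3) is among the hypotheses. This is exactly the step you yourself flag as ``the main obstacle,'' and it remains unresolved; as written the proof is incomplete. (The rest of your construction is sound: the compatibility $(q\otimes C)j^{\sharp}=\sigma f$ does hold because $f$ is a comodule map, so $i$ exists with $pi=f$, and $p\in \mathsf{Post}_{\mathsf Z\otimes C}$ by hypothesis (4) and closure of Postnikov towers under pullback.)

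The paper's proof avoids this issue by a different choice of what to factor: not $U_{C}f$, but the map $U_{C}M\to e$ from the \emph{source} to the terminal object of $\cat M$. Writing $U_{C}M\xrightarrow{j} Z\xrightarrow{q} e$ with $j\in\mathsf{Cof}_{\cat M}$ and $q\in\mathsf{Post}_{\mathsf Z}$, the transpose $j^{\sharp}=(j\otimes C)\rho:M\to Z\otimes C$ lies in $\mathsf C$ by hypotheses (1) and (2) (the same observation you make), and the intermediate comodule is then $(Z\otimes C)\times N$---a pullback over the terminal object $e\otimes C$ of $\cat{Comod}_{C}$, i.e., an honest product---so hypothesis (3) applies verbatim to give $i=(j^{\sharp},f)\in\mathsf C$. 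The projection $p:(Z\otimes C)\times N\to N$ is the pullback of $q\otimes C\in \mathsf{Post}_{\mathsf Z}\otimes C\subset \mathsf{Post}_{\mathsf Z\otimes C}$ along $N\to e\otimes C$, hence lies in $\mathsf{Post}_{\mathsf Z\otimes C}$, and $f=pi$. If you replace your factorization of $U_{C}f$ by this factorization of $U_{C}M\to e$, your argument closes up and becomes the paper's proof.
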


Note that hypothesis (4) holds if, for example, $\mathsf Z=\mathsf{Fib}_{\cat M}\cap\mathsf{WE}_{\cat M}$, since then $\mathsf{Post}_{ \mathsf Z}=\mathsf Z$.

\begin{proof}  Let $f:M\to N$ be any morphism of right $C$-comodules.   Let $e$ denote the terminal object in $\cat M$, and consider the factorization in $\cat M$
$$\xymatrix{U_{C}M\ar [rr]\ar [dr]_{j}&&e\\ &Z\ar [ur]_{q}^\sim},$$
where $j$ is a cofibration and $q\in\mathsf{Post}_{ \mathsf Z}$.  Taking adjoints, we obtain a commuting triangle in $\cat{Comod}_{C}$
$$\xymatrix{M\ar [rr]\ar [dr]_{j^\sharp}&&e\otimes C\\ &Z\otimes C\ar [ur]_{q\otimes C}^\sim},$$
where $e\otimes C$ is the terminal object in $\cat {Comod}_{C}$, since $-\otimes C$ preserves limits.  Moreover, $j^\sharp=(j\otimes C)\rho$, where $\rho:M\to M\otimes C$ is the $C$-coaction on $M$.  It follows from hypotheses (1) and (2) that $U_{C}j^\sharp$ is a cofibration in $\cat M$, i.e., that $j^\sharp\in \mathsf C$.

Hypothesis (3) now implies that $i=(j^\sharp, f): M\to (Z\otimes C)\times N$ is also in $\cat C$.  
Finally, consider the pullback
$$\xymatrix{(Z\otimes C)\times N \ar [r]\ar [d]^{ p}&Z\otimes C\ar [d] ^{q\otimes C}\\ N\ar [r]& e\otimes C.}$$
Since $q\otimes C\in \mathsf{Post}_{ \mathsf Z}\otimes C\subset \mathsf{Post}_{ \mathsf Z\otimes C}$, the induced map $ p:(Z\otimes C)\times N \to N$ is an element of $\mathsf{Post}_{\mathsf Z\otimes C}$ as well and  $f=pi$ is the desired factorization.
\end{proof} 

It is generally more difficult to prove the existence of the second sort of Postnikov factorization in $\cat {Comod}_{C}$.  Rather than establishing a general result, we show that such factorizations exist in the examples we treat in section \ref{subsec:ex-htpycoinv}.  We suspect that the methods of proof we apply in these specific cases can be generalized in a relatively straightforward manner, whenever it is possible to construct Postnikov-type decompositions of objects in $\cat M$ inductively.

\subsection{Model categories of comodule algebras}\label{subsec:comod-alg}

Let $(\cat M, \otimes, I)$  be a monoidal category that is endowed with model category structure as well, and let $(H, \Delta, \mu, \ve, \eta)$ be a bimonoid in $\cat M$.  We now analyze possible model category structures on $\cat{Alg}_{H}$, the category of $H$-comodule algebras in $\cat M$.  
As above, we separate the analysis into two parts:  the cartesian case and the Postnikov case.

\subsubsection{Cartesian categories}  Let $(\cat M, \times , e)$ be a cartesian category and a model category.  If $A$ is a monoid in $\cat M$, then the diagonal map $A\to A\times A$ is a morphism of monoids, as can be seen by a straightforward application of the universal property of the product. 

The argument in section \ref{subsec:cartesian} implies that if $H$ is a bimonoid in $\cat M$, with comultiplication equal to the diagonal map, then the category of $H$-comodule algebras in $\cat M$ is isomorphic to the slice category $\cat{Alg}/H$ of monoid maps with target $H$.  A model structure on $\cat {Alg}$ therefore naturally gives rise to a right-induced model structure on the category of $H$-comodule algebras,  given by $\mathsf C_{\cat{Alg}_{H}}=(U'_{H})^{-1}(\mathsf C_{\cat {Alg}})$ for each of the distinguished classes $\mathsf C=\mathsf {WE}, \mathsf{Fib}, \mathsf{Cof}$, where $U'_{H}: \cat{Alg}_{H}\to \cat{Alg}$ is the forgetful functor \cite{hirschhorn}.  It remains for us to specify the model structures on $\cat {Alg}$ that interest us.

\begin{rmk}\label{rmk:cartesian-mon} Theorem 4.1 in \cite {schwede-shipley} implies that if $(\cat M, \times, e)$ is a cofibrantly generated, monoidal model category satisfying the monoid axiom and if every object in $\cat M $ is small with relative to $\cat M$, then $ \cat{Alg}$ admits a cofibrantly generated model structure that is right-induced by the forgetful functor $U_{Alg}: \cat{Alg} \to \cat M$. For example, as mentioned in section 5 of \cite{schwede-shipley}, the cartesian category $(\cat{sSet},\times , *)$ of simplicial sets satisfies these criteria.
\end{rmk}

\subsubsection{Postnikov presentations} \label{subsec:postnikov-comodalg} Let $(\cat M, \otimes, I)$ be a cofibrantly generated, monoidal model category that satisfies the monoid axiom and such that all objects are small relative to $\cat M$. Let $\mathcal I$ denote the generating cofibrations of $\cat M$.  Let $F:\cat M\to \cat {Alg}$ denote the free monoid functor, i.e., $F(X)=\coprod_{n\geq 0}X^{\otimes n}$, endowed with the multiplication induced by the isomorphism $X^{\otimes m}\otimes X^{\otimes n}\cong X^{\otimes m+n}$. There is an adjoint pair
$$F:\cat M \adjunct {}{} \cat {Alg}: U_{Alg},$$
where $U_{Alg}$ is the forgetful functor.  Theorem 4.1 in \cite{schwede-shipley} implies that there is a cofibrantly generated, right-induced model category structure on $\cat {Alg}$ where $\mathsf{Cof}_{\cat {Alg}}$ is generated by $F(\mathcal I)$.

Let $H$ be a bimonoid in $\cat M$. There is a free/forgetful adjoint pair
$$F_{H}:\cat {Comod}_{H} \adjunct {}{} \cat {Alg}_{H}: U_{Alg,H},$$
similar to the pair $(F,U_{Alg})$ above, where $F_{H}$ is defined in terms of the monoidal structure on $\cat{Comod}_{H}$ given in Definition \ref{defn:comod-monoidal}.
Unfortunately, the model category structure on $\cat {Comod}_{H}$ obtained in Theorem \ref{thm:post-comod} is not generally cofibrantly generated, so that we cannot directly apply the results of \cite{schwede-shipley} to defining a model structure on $\cat {Alg}_{H}$.  It would interesting to determine conditions under which $U_{Alg, H}$ does right-induce a model structure on $\cat {Alg}_{H}$.  For example one could specify conditions under which the model category structure on $\cat {Comod}_{H}$ is cofibrantly generated, perhaps by $U_{H}(\op I)$ and $U_{H}(\op J)$, where $\op I$ and $\op J$ are the generating cofibrations and generating acyclic cofibrations in $\cat M$. 

The forgetful/cofree adjoint  pair
$$U'_{H}:\cat {Alg}_{H}\adjunct {}{} \cat {Alg}:-\otimes H$$
can also give rise to an interesting model category structure on $\cat {Alg}_{H}$.  We cannot apply standard tranfer techniques, since the cofibrantly generated model category is on the right side of this adjunction, so we again appeal to Corollary \ref{cor:postnikov}, obtaining the following result.

\begin{thm}\label{thm:post-comodalg} Let $(\cat M, \otimes, I)$ be a cofibrantly generated, monoidal model category that satisfies the monoid axiom and such that all objects are small relative to $\cat M$.  Let $H$ be a bimonoid in $\cat M$ such that $\cat{Alg}_{H}$ is finitely bicomplete, and let $$U'_{H}:\cat {Alg}_{H}\adjunct {}{} \cat {Alg}:-\otimes H$$ denote the forgetful/cofree adjoint functor pair. Let 
$$\mathsf W=(U'_{H})^{-1}(\mathsf{WE}_{\cat {Alg}}) \text { and } \mathsf C=(U'_{H})^{-1}(\mathsf{Cof}_{\cat{Alg}}),$$
and
$$\mathsf X=\mathsf{Fib}_{\cat {Alg}} \text { and } \mathsf Z=\mathsf{Fib}_{\cat {Alg}}\cap\mathsf{WE}_{\cat {Alg}},$$
where $\cat {Alg}$ is endowed with the model structure right induced by the forgetful functor $U_{Alg}:\cat {Alg}\to \cat M$.
 
If $\mathsf{Post}_{\mathsf Z\otimes H}\subset \mathsf W$ and for all $f\in \mor \cat {Alg}_{H}$
there exist 
\begin{enumerate}
\item [(a)] $i\in \mathsf{C} $ and $p\in \mathsf{Post}_{ \mathsf Z\otimes H}$ such
that $f=pi$; 
\smallskip
\item [(b)]$j\in \mathsf{C}\cap \mathsf{W}$ and $q\in \mathsf{Post}_{\mathsf X\otimes H}$ such that
$f=qj$,
\end{enumerate}
then    $\mathsf W$, $\mathsf C$  and $\widehat{\mathsf{Post}_{\mathsf X\otimes H}}$ are the weak equivalences, cofibrations and fibrations in a model category structure on $\cat {Alg}_{H}$, with respect to which 
$$U'_{H}:\cat {Alg}_{H} \adjunct {}{}Ê\cat {Alg}: -\otimes H$$ 
is a Quillen pair.
\end{thm}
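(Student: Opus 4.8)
The plan is to recognize this statement as the instance of Theorem \ref{thm:post-comod} obtained by taking $\cat{Alg}$ itself as the ambient monoidal model category. Under the stated hypotheses on $(\cat M, \otimes, I)$, Theorem 4.1 of \cite{schwede-shipley} endows $\cat{Alg}$ with the right-induced model structure along $U_{Alg}$, while the tensor product of monoids equips $\cat{Alg}$ with a monoidal structure having unit $I$. Since $H$ is a bimonoid in $\cat M$, it is in particular a comonoid in $\cat{Alg}$; and by Definition \ref{defn:comod-monoidal} the category $\cat{Alg}_H$ of $H$-comodule algebras is isomorphic to the category of $H$-comodules in $\cat{Alg}$, in such a way that $U'_H$ becomes the forgetful functor of Theorem \ref{thm:post-comod} for the comonoid $H$ in $\cat{Alg}$, with $-\otimes H$ its cofree right adjoint. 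Thus the present statement should be exactly Theorem \ref{thm:post-comod} with $\cat M$ replaced by $\cat{Alg}$ and $C$ replaced by $H$.

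The one point requiring verification is that the prescribed classes $\mathsf X = \mathsf{Fib}_{\cat{Alg}}$ and $\mathsf Z = \mathsf{Fib}_{\cat{Alg}} \cap \mathsf{WE}_{\cat{Alg}}$ constitute a Postnikov presentation of $\cat{Alg}$. This is the canonical such presentation, and I would check it directly. In any model category the fibrations and the acyclic fibrations are closed under pullback, under limits of towers, and under retracts; hence $\mathsf{Post}_{\mathsf X} = \mathsf X$ and $\mathsf{Post}_{\mathsf Z} = \mathsf Z$, so that $\widehat{\mathsf{Post}_{\mathsf X}} = \mathsf{Fib}_{\cat{Alg}}$ and $\widehat{\mathsf{Post}_{\mathsf Z}} = \mathsf{Fib}_{\cat{Alg}} \cap \mathsf{WE}_{\cat{Alg}}$. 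The two factorizations required of a Postnikov presentation are then furnished verbatim by the factorization axioms of the model structure on $\cat{Alg}$. This is precisely the situation of the remark following Lemma \ref{lem:postfact}, where $\mathsf Z = \mathsf{Fib}_{\cat M} \cap \mathsf{WE}_{\cat M}$ forces $\mathsf{Post}_{\mathsf Z} = \mathsf Z$.

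With these identifications in place I would simply feed the data into Theorem \ref{thm:post-comod}. The hypotheses $\mathsf{Post}_{\mathsf Z \otimes H} \subset \mathsf W$ and the existence, for every $f \in \mor \cat{Alg}_H$, of the Postnikov factorizations (a) and (b) are assumed verbatim in the present statement, and $\cat{Alg}_H$ is assumed finitely bicomplete. Theorem \ref{thm:post-comod} then yields the asserted model structure on $\cat{Alg}_H$, with weak equivalences $\mathsf W$, cofibrations $\mathsf C$, and fibrations $\widehat{\mathsf{Post}_{\mathsf X \otimes H}}$, and with $U'_H \dashv (-\otimes H)$ a Quillen pair. The only genuine obstacle is the bookkeeping of the first paragraph: one must confirm that the monoidal structure on $\cat{Alg}$ supplying the cofree functor $-\otimes H$ agrees on underlying objects with the ambient $-\otimes H$ of $\cat M$, so that the tensor appearing in the Postnikov classes $\mathsf X \otimes H$ and $\mathsf Z \otimes H$ is the intended one. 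Granting this compatibility, no work beyond the application of Theorem \ref{thm:post-comod} is needed.
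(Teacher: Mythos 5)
Your proposal is correct and is essentially the paper's own argument: the paper proves this theorem by appealing directly to Corollary \ref{cor:postnikov} for the adjunction $U'_{H}:\cat{Alg}_{H}\rightleftarrows \cat{Alg}:-\otimes H$, using the canonical Postnikov presentation $(\mathsf{Fib}_{\cat{Alg}},\mathsf{Fib}_{\cat{Alg}}\cap\mathsf{WE}_{\cat{Alg}})$ of the right-induced model structure on $\cat{Alg}$ (cf.\ Remark \ref{rmk:postnikov-lift}). Your route through Theorem \ref{thm:post-comod}, viewing $H$ as a comonoid in $\cat{Alg}$ and $\cat{Alg}_{H}$ as $H$-comodules in $\cat{Alg}$, unwinds to exactly the same application of that corollary, and your verification of the canonical Postnikov presentation matches the paper's.
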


In section \ref{subsec:ex-htpycoinv} we examine examples of categories and bimonoids that satisfy the hypotheses of this theorem.

\section{Homotopy coinvariants}

Let $C$ be a comonoid in a monoidal model category $\cat M$. In this section we define and provide several examples of a homotopy invariant replacement of the coinvariants functor
$$\operatorname{Coinv}: \cat{Comod}_{C}\to \cat M: M\to M\underset C\square I.$$
Our strategy is to determine conditions under which the coinvariants functor is the right member of a Quillen pair, then to define the homotopy coinvariants functor to be the total derived right functor of $\operatorname{Coinv}$ under those conditions.

\subsection {Deriving the coinvariants functor}

\begin{defn} Let $(\cat M,\otimes , I)$ be a monoidal category, and let $C$ be a comonoid in $\cat M$ endowed with a coaugmentation $\eta:I\to C$.  The \emph{trivial comodule functor}  $\operatorname{Triv}: \cat M\to \cat{Comod}_{C}$ is specified by $\operatorname{Triv}(X)=(X, X\otimes \eta)$ for all objects $X$ in $\cat M$ and $\operatorname{Triv}(f)=f$ for all morphisms $f$.
\end{defn}

Note that $\cat M$ could itself be the category of a monoids in an underlying monoidal category, i.e., the case of  comodule algebras is englobed by this definition.

\begin{rmk} It is easy to check that $\operatorname{Triv}:\cat M \to \cat{Comod}_{C}$ is left adjoint to the coinvariants functor
$$\operatorname{Coinv}:\cat {Comod}_{C}\to \cat M: (M,\rho)\mapsto M^{co \, C}=M\underset C\square I.$$
\end{rmk}

\begin{defn} Let $C$ be a coaugmented comonoid in a monoidal category $\cat M$.  If $\operatorname{Triv}:\cat M \adjunct {}{} \cat{Comod}_{C}: \operatorname{Coinv}$ is a Quillen pair, then the total right derived functor
$$\mathbb R\operatorname{Coinv}: \operatorname{Ho}\cat {Comod}_{C}\to \operatorname{Ho}\cat M$$
is the \emph{homotopy coinvariants functor}.   If $M$ is a right $C$-comodule, then a representative of $\mathbb R\operatorname{Coinv} (M)$ is called a \emph{model} of the homotopy coinvariants of $M$.
\end{defn}

\begin{notn} In a slight abuse of notation, any model of the homotopy coinvariants of a right $C$-comodule $M$ is denoted $M^{hco\, C}$.  Thus, if $\xymatrix@1{M\wecof &RM\fib &e\otimes C}$ is any fibrant replacement of $M$ in $\cat{Comod}_{C}$, where $e$ is the terminal object in $\cat M$, then $(RM)^{co\, C}=M^{hco\, C}$. 
\end{notn}

In the following propositions, we specify conditions under which $(\operatorname{Triv}, \operatorname{Coinv})$ is a Quillen pair and which therefore guarantee the existence of a homotopy coinvariants functor.  We first consider the cartesian case.

\begin{prop} Let $(\cat M, \times, e)$ be a cartesian category and a model category.  If $C$ is any object in  $\cat M$, seen as a comonoid via the diagonal map $\Delta:C\to C\times C$ and endowed with a coaugmentation $\eta: e\to C$, then the adjoint pair 
$$\operatorname{Triv}:\cat M \adjunct {}{} \cat{Comod}_{C}: \operatorname{Coinv}$$
is a Quillen pair, where $\cat {Comod}_{C}$ is endowed with the model structure described in section \ref{subsec:cartesian}.
\end{prop}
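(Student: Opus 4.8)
The plan is to verify the defining property of a Quillen pair directly on the left adjoint. Since $\operatorname{Triv}\dashv\operatorname{Coinv}$ (the adjunction recorded in the remark immediately preceding the statement), it suffices to show that $\operatorname{Triv}$ preserves cofibrations and acyclic cofibrations. I would deliberately avoid arguing on the $\operatorname{Coinv}$ side, since that functor is defined by an equalizer and its interaction with fibrations is far less transparent; pushing everything onto the left adjoint is the economical route.

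The heart of the argument is the observation that, in the cartesian setting, the model structure on $\cat{Comod}_{C}$ is the one right-induced by the forgetful functor $U_{C}\colon\cat{Comod}_{C}\to\cat M$, as established in section~\ref{subsec:cartesian} via the equivalence $\cat{Comod}_{C}\simeq\cat M/C$. Concretely, a morphism of $C$-comodules is a cofibration (respectively a weak equivalence) precisely when its underlying morphism in $\cat M$ is one. I would then invoke the triangle identity $U_{C}\circ\operatorname{Triv}=\operatorname{Id}_{\cat M}$, which is immediate from the definitions $\operatorname{Triv}(X)=(X,X\otimes\eta)$ and $\operatorname{Triv}(f)=f$. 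Hence, for any morphism $h$ in $\cat M$, the underlying morphism of $\operatorname{Triv}(h)$ is $h$ itself; if $h$ is a cofibration (resp. an acyclic cofibration) in $\cat M$, then $U_{C}\operatorname{Triv}(h)=h$ is a cofibration (resp. a cofibration and a weak equivalence) in $\cat M$, so by the characterization above $\operatorname{Triv}(h)$ is a cofibration (resp. an acyclic cofibration) in $\cat{Comod}_{C}$. This is exactly the assertion that $\operatorname{Triv}$ preserves cofibrations and acyclic cofibrations, and the Quillen pair follows.

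I expect essentially no serious obstacle: once one recognizes that the model structure is created by $U_{C}$ and that $\operatorname{Triv}$ is a section of $U_{C}$, the conclusion is formal. The only points demanding a little care are preliminary. First, one must confirm that $\operatorname{Triv}$ is genuinely well defined and left adjoint to $\operatorname{Coinv}$; this requires the coaugmentation $\eta\colon e\to C$ to be a comonoid map, which is automatic, since $e$ is terminal and the diagonal comultiplication on $C$ forces $\Delta\eta=(\eta,\eta)$. Second, one should check that $\cat{Comod}_{C}$ really is a model category so that the statement is meaningful, which holds because $\cat M/C$ inherits finite limits and colimits from $\cat M$. As a sanity check I would re-read the statement through the equivalence $\cat{Comod}_{C}\simeq\cat M/C$, under which $\operatorname{Triv}$ becomes the functor $X\mapsto(X\to e\xrightarrow{\eta}C)$ acting as the identity on underlying objects; this renders the preservation of the two distinguished classes manifest.
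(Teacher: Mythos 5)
Your proposal is correct and follows essentially the same route as the paper: the paper's proof simply observes that $\cat{Comod}_{C}\cong\cat M/C$ and that the conclusion is immediate from the slice model structure of section \ref{subsec:cartesian}, which is exactly the argument you spell out (all distinguished classes are detected on underlying objects, and $\operatorname{Triv}$ is a section of the forgetful functor, hence preserves cofibrations and acyclic cofibrations). Your version just makes explicit the details the paper leaves to the reader.
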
 

\begin{proof} Since $\cat {Comod}_{C}$ is isomorphic to $\cat M/C$, this proposition follows immediately from the definition of the model category structure on $\cat M/C$ (cf. section \ref{subsec:cartesian}).
\end{proof}

As a special case of the proposition above, we can treat coinvariants of  comodule algebras.

\begin{cor} \label{cor:comodalg-cartesian}Let $(\cat M, \times, e)$ be a cartesian category and a model category such that the forgetful functor $\cat {Alg}\to \cat M$ right-induces a model structure on $\cat {Alg}$.  If $H$ is a bimonoid in $\cat M$, with comultiplication equal to the diagonal map, then the adjoint pair 
$$\operatorname{Triv}:\cat {Alg} \adjunct {}{} \cat{Alg}_{H}: \operatorname{Coinv}$$
is a Quillen pair.
\end{cor}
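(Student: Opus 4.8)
The plan is to deduce this from the preceding Proposition by recognizing that the category $\cat{Alg}$ of monoids in $\cat M$ is itself a cartesian category and a model category, with the bimonoid $H$ playing the role of the coaugmented comonoid $C$. In other words, I would not prove anything new, but rather substitute $\cat{Alg}$ for $\cat M$ and $H$ for $C$ in the statement just proved, checking only that all the hypotheses transfer.

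First I would verify that $(\cat{Alg}, \times, e)$ is a cartesian category. Since $\cat M$ is cartesian, the product $A\times A'$ of two monoids, equipped with componentwise multiplication, is the categorical product in $\cat{Alg}$, and the terminal monoid $e$ (the terminal object of $\cat M$ carrying its unique, necessarily trivial, monoid structure) serves as both the terminal object and the monoidal unit of $\cat{Alg}$. By hypothesis the forgetful functor $U_{Alg}\colon\cat{Alg}\to\cat M$ right-induces a model structure on $\cat{Alg}$, so $\cat{Alg}$ is exactly a cartesian category and a model category as required by the preceding Proposition. Next I would identify the comonoid data: a bimonoid $H$ in $\cat M$ whose comultiplication is the diagonal is precisely an object of the cartesian category $\cat{Alg}$ regarded as a comonoid via its diagonal $\Delta_H\colon H\to H\times H$, which is a morphism of monoids as observed in section \ref{subsec:comod-alg}, and coaugmented by its unit $\eta\colon e\to H$, which is the unique monoid map out of the initial monoid $e$ and hence automatically a morphism of monoids. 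By Definition \ref{defn:comod-monoidal} the category $\cat{Alg}_H$ is isomorphic to the category of $H$-comodules in $\cat{Alg}$, and by the cartesian discussion of section \ref{subsec:cartesian} applied with $\cat M$ replaced by $\cat{Alg}$, this category is in turn the slice category $\cat{Alg}/H$ with its right-induced model structure.

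With these identifications in hand, applying the preceding Proposition verbatim to the cartesian model category $\cat{Alg}$ and the coaugmented diagonal comonoid $H$ yields that $\operatorname{Triv}\colon\cat{Alg}\adjunct{}{}\cat{Alg}_H\colon\operatorname{Coinv}$ is a Quillen pair, which is exactly the claim. I do not expect any substantive obstacle here, since the entire content has already been supplied by the preceding Proposition and the cartesian analyses of sections \ref{subsec:cartesian} and \ref{subsec:comod-alg}; the only point requiring care is bookkeeping, namely confirming that the product and unit witnessing the cartesian structure on $\cat{Alg}$ coincide with those used to define the monoidal structure on $\cat{Comod}_H$ (and hence $\cat{Alg}_H$) in Definition \ref{defn:comod-monoidal}, and that the model structure on $\cat{Alg}_H$ obtained via the slice identification $\cat{Alg}/H$ is the one implicitly used in forming $\mathbb R\operatorname{Coinv}$. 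Both are immediate from the universal property of the product.
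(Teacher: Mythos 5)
Your proposal is correct and takes essentially the same route as the paper: the corollary is stated there without a separate proof, precisely as ``a special case of the proposition above,'' i.e., the preceding proposition applied with the cartesian model category $\cat{Alg}$ (with its right-induced model structure) in place of $\cat M$ and the diagonal bimonoid $H$ in place of $C$, using the identification of $\cat{Alg}_{H}$ with the slice category $\cat{Alg}/H$ established in the paper's discussion of comodule algebras in cartesian categories. Your verification that the cartesian structure on $\cat{Alg}$, the coaugmentation $\eta\colon e\to H$, and the slice model structure all line up simply makes explicit the bookkeeping the paper leaves implicit.
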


We now consider the case of model categories of comodules with left-induced model structures, as in Theorem \ref{thm:post-comod}.

\begin{prop}\label{prop:htpycoinv-dual} Let $\cat M$ be endowed with both a model category structure  and a monoidal structure $(\otimes, I)$.  If $\cat {Comod}_{C}$ admits a model category structure left-induced by $U_{C}:\cat {Comod}_{C} \to\cat M$,  then  
$$\operatorname{Triv}:\cat M \adjunct {}{} \cat{Comod}_{C}: \operatorname{Coinv}$$
is a Quillen pair as well.
\end{prop}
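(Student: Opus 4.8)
The plan is to show that $(\operatorname{Triv}, \operatorname{Coinv})$ is a Quillen pair by verifying that the right adjoint $\operatorname{Coinv}$ preserves fibrations and acyclic fibrations, which is one of the standard equivalent characterizations of a Quillen adjunction. The crucial observation to exploit is the defining feature of a \emph{left-induced} model structure on $\cat{Comod}_C$: by Definition \ref{defn:induction}, the weak equivalences and cofibrations of $\cat{Comod}_C$ are precisely those morphisms $f$ for which $U_C f$ is a weak equivalence, respectively a cofibration, in $\cat M$. Dually, the fibrations of $\cat{Comod}_C$ are \emph{not} detected by $U_C$ but are instead determined by the right lifting property against the acyclic cofibrations.

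First I would set up the adjunction correctly. We already know $\operatorname{Triv}$ is left adjoint to $\operatorname{Coinv}$, so it suffices to show $\operatorname{Triv}$ preserves cofibrations and acyclic cofibrations. This is in fact the easier direction to verify here, precisely because of how the left-induced structure is built: if $f$ is a cofibration (resp. acyclic cofibration) in $\cat M$, I must check that $\operatorname{Triv}(f) = f$ is a cofibration (resp. acyclic cofibration) in $\cat{Comod}_C$. But $U_C\operatorname{Triv}(f) = f$ by construction of the trivial comodule functor, and since the cofibrations and weak equivalences in the left-induced structure are exactly those morphisms sent by $U_C$ to cofibrations and weak equivalences in $\cat M$, the morphism $\operatorname{Triv}(f)$ lies in the required class essentially tautologically. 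So $\operatorname{Triv}$ preserves both cofibrations and weak equivalences, and hence acyclic cofibrations.

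The main steps are therefore: (i) recall that $(\operatorname{Triv},\operatorname{Coinv})$ is an adjoint pair and that, to be Quillen, it is enough that the left adjoint $\operatorname{Triv}$ preserve cofibrations and acyclic cofibrations; (ii) unwind the definition of the left-induced model structure so that a morphism $g$ in $\cat{Comod}_C$ is a cofibration exactly when $U_C g\in\mathsf{Cof}_{\cat M}$ and a weak equivalence exactly when $U_C g\in \mathsf{WE}_{\cat M}$; and (iii) observe that $U_C\circ\operatorname{Triv} = \operatorname{Id}_{\cat M}$, so that $\operatorname{Triv}$ trivially sends cofibrations to cofibrations and weak equivalences to weak equivalences. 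Combining these yields the claim. The one point requiring genuine care, rather than formal manipulation, is step (iii): I must confirm that the coaugmentation-twisted coaction $X\otimes\eta$ placed on $\operatorname{Triv}(X)$ does not interfere with this identity, i.e., that $U_C$ really does forget back to the original object $X$ with the original morphisms. Since $\operatorname{Triv}(f)=f$ on underlying objects and morphisms by definition, this is immediate, and I expect no serious obstacle—the whole force of the proposition comes from the definitional compatibility between $U_C\circ\operatorname{Triv}=\operatorname{Id}$ and the left-induction recipe.
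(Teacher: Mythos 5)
Your proposal is correct and is essentially the paper's own proof, which consists of the single observation that, since $U_{C}$ left-induces the model structure on $\cat{Comod}_{C}$ and $U_{C}\circ\operatorname{Triv}=\operatorname{Id}_{\cat M}$, the functor $\operatorname{Triv}$ tautologically preserves cofibrations and weak equivalences, hence acyclic cofibrations. (One small presentational slip: your opening sentence announces you will check that the right adjoint preserves fibrations and acyclic fibrations, but the argument you actually give — correctly — is the equivalent left-adjoint criterion.)
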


\begin{proof}  Since $U_{C}$ left-induces the model structure on $\cat{Comod}_{C}$, it is clear that $\operatorname{Triv}$ preserves both cofibrations and weak equivalences.
\end{proof}

We obtain a result for left-induced model structures on $\cat {Alg}_{H}$ (cf. Theorem \ref{thm:post-comodalg})  as a special case of the proposition above.

\begin{cor} \label{prop:comodalg-dual} Let $(\cat M, \otimes, I)$ be a monoidal category endowed  with a  model category structure.  Let  $H$ be a bimonoid in  $\cat M$.   If $\cat {Alg}$ admits a model structure right-induced by the forgetful functor $U_{Alg}:\cat {Alg}\to \cat M$ and  $\cat {Alg}_{H}$ admits a model structure left-induced by the forgetful functor $U_{H}':\cat {Alg}_{H}\to  \cat{Alg}$,  then  
$$\operatorname{Triv}:\cat {Alg} \adjunct {}{} \cat{Alg}_{H}: \operatorname{Coinv}$$
is a Quillen pair as well.
\end{cor}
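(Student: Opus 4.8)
The plan is to deduce Corollary \ref{prop:comodalg-dual} from Proposition \ref{prop:htpycoinv-dual} by a single application of that proposition to the monoidal model category $\cat{Alg}$ in place of $\cat M$. The essential observation, already flagged in the remark following Definition \ref{defn:comod-monoidal}, is that when $\cat M$ is symmetric monoidal the category $\cat{Alg}$ of monoids in $\cat M$ is itself a monoidal category, and $\cat{Alg}_H$ — the category of $H$-comodule algebras — is by definition isomorphic to $\cat{Comod}_H$ computed internally to $\cat{Alg}$. So the abstract setup of Proposition \ref{prop:htpycoinv-dual} applies verbatim once we interpret its ambient category $\cat M$ as $\cat{Alg}$ and its comonoid $C$ as $H$, viewed as a comonoid object in $\cat{Alg}$ (i.e. as a bimonoid in $\cat M$).

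First I would record that $\cat{Alg}$, equipped with the model structure right-induced along $U_{\cat{Alg}}$ by hypothesis, is simultaneously a model category and a monoidal category, so it satisfies the standing assumptions of Proposition \ref{prop:htpycoinv-dual}. Next I would identify the relevant forgetful functor: in the internal picture, $U_H'\colon \cat{Alg}_H\to\cat{Alg}$ plays exactly the role of $U_C\colon\cat{Comod}_C\to\cat M$, and by assumption it left-induces a model structure on $\cat{Alg}_H$. Proposition \ref{prop:htpycoinv-dual} then applies directly and yields that
$$\operatorname{Triv}\colon \cat{Alg} \adjunct {}{} \cat{Alg}_H \colon \operatorname{Coinv}$$
is a Quillen pair, which is precisely the assertion of the corollary. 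Concretely, the proof reduces to observing that $\operatorname{Triv}$ preserves cofibrations and weak equivalences because these are detected by $U_H'$, exactly as in the proof of Proposition \ref{prop:htpycoinv-dual}.

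The one point requiring a word of care — and the only place where this is more than a pure citation — is checking that the trivial-comodule construction and the coinvariants construction performed internally to $\cat{Alg}$ agree with the functors named in the corollary. The trivial $H$-coaction on a monoid $A$ is $A\otimes\eta\colon A\to A\otimes H$, which is a map of monoids, so $\operatorname{Triv}$ genuinely lands in $\cat{Alg}_H$; and the cotensor $A\,\square_H\,I$ defining $\operatorname{Coinv}$ is an equalizer, hence is computed in $\cat{Alg}$ whenever $\cat{Alg}_H$ has the requisite limits (guaranteed here since we are assuming a model structure, and in particular finite completeness, on $\cat{Alg}_H$). I expect the main — indeed the only — obstacle to be this bookkeeping: verifying that the adjunction $(\operatorname{Triv},\operatorname{Coinv})$ between $\cat{Alg}$ and $\cat{Alg}_H$ is literally the instance of $(\operatorname{Triv},\operatorname{Coinv})$ from Proposition \ref{prop:htpycoinv-dual} obtained by substituting $\cat M\rightsquigarrow\cat{Alg}$ and $C\rightsquigarrow H$, rather than some superficially similar but distinct pair. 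Once that identification is in hand, no new homotopical input is needed and the corollary follows immediately.
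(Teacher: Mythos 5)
Your proposal is correct and is essentially the paper's own argument: the corollary is obtained by applying Proposition \ref{prop:htpycoinv-dual} with $\cat M$ replaced by $\cat{Alg}$ and $C$ by $H$ viewed as a comonoid in $\cat{Alg}$, using the identification of $\cat{Alg}_H$ with the category of $H$-comodules in $\cat{Alg}$ noted in Definition \ref{defn:comod-monoidal}. The bookkeeping you flag at the end (that $\operatorname{Triv}$ and $\operatorname{Coinv}$ computed internally to $\cat{Alg}$ are literally the functors in the statement) is indeed the only content beyond citation, and the paper treats it as immediate, having already remarked that the definition of $\operatorname{Triv}$ englobes the comodule-algebra case.
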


When the model structure on $\cat {Alg}_{H}$ is right-induced, we have the following result.

\begin{prop} \label{prop:comodalg-dual}Let $(\cat M, \otimes, I)$ be a cofibrantly generated, monoidal model category that satisfies the monoid axiom and such that all objects are small relative to $\cat M$.  Let $H$ be a bimonoid in $\cat M$.  

If $\cat {Comod}_{H}$ admits a model category structure left-induced by $U_{H}:\cat {Comod}_{H} \to\cat M$ and $\cat {Alg}_{H}$ admits a model structure right-induced by  $U_{Alg, H}:\cat {Alg}_{H}\to \cat{Comod}_{H}$,  then  
$$\operatorname{Triv}:\cat {Alg} \adjunct {}{} \cat{Alg}_{H}: \operatorname{Coinv}$$
is a Quillen pair as well, where $\cat {Alg}$ is endowed with its usual right-induced model structure.
\end{prop}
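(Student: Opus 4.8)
The plan is to verify that the right adjoint $\operatorname{Coinv}\colon\cat{Alg}_{H}\to\cat{Alg}$ preserves fibrations and acyclic fibrations, which is equivalent to $(\operatorname{Triv},\operatorname{Coinv})$ being a Quillen pair. The crucial observation is that the coinvariants functors at the algebra and comodule levels fit into a strictly commuting square of forgetful functors. Indeed, $\operatorname{Coinv}(M,\rho)=M^{co\,H}$ is computed as an equalizer in $\cat M$ that does not see the multiplicative structure, so a direct check on objects and morphisms shows that
$$U_{Alg}\circ\operatorname{Coinv}=\operatorname{Coinv}\circ U_{Alg,H}$$
as functors $\cat{Alg}_{H}\to\cat M$, where the left-hand occurrence is the algebra-level coinvariants and the right-hand occurrence is the comodule-level functor $\operatorname{Coinv}\colon\cat{Comod}_{H}\to\cat M$. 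This identity is the backbone of the argument.

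Next I would invoke Proposition~\ref{prop:htpycoinv-dual} with $C=H$: since $\cat{Comod}_{H}$ is assumed to carry the model structure left-induced by $U_{H}$, that proposition guarantees that the comodule-level $\operatorname{Coinv}\colon\cat{Comod}_{H}\to\cat M$ is right Quillen and hence preserves fibrations and acyclic fibrations. The remainder is a diagram chase through the induced structures. Let $f$ be a fibration, respectively an acyclic fibration, in $\cat{Alg}_{H}$. Because this structure is right-induced by $U_{Alg,H}$, both fibrations and weak equivalences are created by $U_{Alg,H}$, so $U_{Alg,H}(f)$ is a fibration, respectively an acyclic fibration, in $\cat{Comod}_{H}$; applying the comodule-level $\operatorname{Coinv}$, the map $\operatorname{Coinv}\bigl(U_{Alg,H}(f)\bigr)$ is then a fibration, respectively an acyclic fibration, in $\cat M$. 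By the commuting square this map equals $U_{Alg}\bigl(\operatorname{Coinv}(f)\bigr)$, and since $\cat{Alg}$ carries the structure right-induced by $U_{Alg}$, it follows that $\operatorname{Coinv}(f)$ is a fibration, respectively an acyclic fibration, in $\cat{Alg}$. Thus $\operatorname{Coinv}$ is right Quillen, as desired.

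I expect the only real subtlety to lie in the bookkeeping rather than in any genuine difficulty. One must keep straight which of the three induced structures creates which distinguished class---the two right-induced structures on $\cat{Alg}$ and on $\cat{Alg}_{H}$ create fibrations and weak equivalences, whereas the left-induced structure on $\cat{Comod}_{H}$ creates cofibrations and weak equivalences---and one must confirm that the forgetful-functor square commutes on the nose, so that the created classes line up exactly. The single non-formal ingredient is Proposition~\ref{prop:htpycoinv-dual}, which supplies the right Quillen property of the comodule-level coinvariants; granting that, the statement is a purely formal consequence of the definitions of the induced model structures.
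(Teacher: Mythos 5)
Your proof is correct, but it runs on the opposite side of the adjunctions from the paper's own argument. The paper works entirely with \emph{left} adjoints: it uses the strictly commuting square $F_{H}\circ\operatorname{Triv}=\operatorname{Triv}\circ F$ of free monoid functors, invokes Proposition \ref{prop:htpycoinv-dual} to see that the comodule-level $\operatorname{Triv}$ preserves cofibrations, notes that $F_{H}$ preserves cofibrations because its right adjoint $U_{Alg,H}$ right-induces, and then—crucially—uses cofibrant generation of $\cat{Alg}$ (the cofibrations are generated by $F\mathcal I$) together with the fact that $\operatorname{Triv}$ is a left adjoint to propagate from generating (acyclic) cofibrations to all of them. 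Your argument is the mate of this: you use the commuting square of \emph{right} adjoints $U_{Alg}\circ\operatorname{Coinv}=\operatorname{Coinv}\circ U_{Alg,H}$ and chase fibrations and acyclic fibrations through the three induced structures, with Proposition \ref{prop:htpycoinv-dual} again supplying the only non-formal input. What your route buys is economy and generality: you never touch the generating sets, so your argument works verbatim whenever the three model structures exist, independent of cofibrant generation, smallness, or the monoid axiom (those hypotheses are only needed to guarantee the structures exist in the first place). What it costs is that the commutation of your square is slightly less immediate than the paper's square of free functors: it holds because the algebra-level coinvariants is an equalizer computed in $\cat{Alg}$ and $U_{Alg}$, being a right adjoint that creates limits, carries it to the equalizer in $\cat M$ defining the comodule-level coinvariants—so the square commutes up to canonical natural isomorphism, which suffices. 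Your phrase ``does not see the multiplicative structure'' glosses exactly this point and should be anchored to the creation of limits by $U_{Alg}$.
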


\begin{proof}  It is easy to check that the following diagram of functors commutes.
$$\xymatrix{\cat M\ar [d]_{\operatorname{Triv}}\ar[r]^F& \cat{Alg}\ar [d]^{\operatorname{Triv}}\\ \cat{Comod}_{H}\ar [r]^{F_{H}}& \cat{Alg}_{H}}$$
Here, $F$ and $F_{H}$ are the free monoid functors.  

Let $\mathcal I$ be the set of generating cofibrations in $\cat M$, and let $i\in \mathcal I$.  Proposition \ref{prop:htpycoinv-dual} implies that $\operatorname{Triv}(i)$ is a cofibration in $\cat{Comod}_{H}$.  Moreover, a simple adjunction argument shows that $F_{H}$ preserves cofibrations, since its right adjoint, the forgetful functor $U_{Alg, H}: \cat{Alg}_{H}\to \cat{Comod}_{H}$, right-induces the model structure on $\cat{Alg}_{H}$.  Thus, $F_{H}\circ\operatorname{Triv}(i)$ is a cofibration in $\cat{Alg}_{H}$ for all $i\in \mathcal I$, i.e.,
$$\operatorname{Triv}(F\mathcal I)\subset \mathsf{Cof}_{\cat {Alg}_{H}}.$$
Since $F\mathcal I$ generates the cofibrations in $ \cat{Alg}$ and $\operatorname{Triv}$ is a left adjoint, it follows that
$$\operatorname{Triv}(\mathsf{Cof}_{\cat {Alg}})\subset \mathsf{Cof}_{\cat {Alg}_{H}}.$$

A similar argument applied to the set of generating acyclic cofibrations in $\cat M$ implies that $\operatorname{Triv}: \cat{Alg}\to \cat{Alg}_{H}$ preserves acyclic cofibrations as well.  We conclude that $(\operatorname{Triv}, \operatorname{Coinv})$ is indeed a Quillen pair.
\end{proof}

\subsection{Examples}\label{subsec:ex-htpycoinv}

We present in this section four examples of categories of comodules in which there is a good definition of homotopy coinvariants.  

\subsubsection{Spaces}

Let $\cat M=\cat {Top}$ or $\cat {sSet}$, with their cartesian monoidal structure.  We refer to the objects of either category as \emph {spaces}.  

Let $Y$ be a  space, seen as  a comonoid via the diagonal map.   A coaugmentation $\eta: * \to Y$ consists of a choice of basepoint $y_{0}=\eta (*)$ for the space $Y$.  Let $X$ be another space, and let $f\in \cat{M}(X,Y)$, giving rise to a right $Y$-coaction on $X$ as in section \ref{subsec:cartesian}.

To compute the homotopy coinvariants of $X$ with respect to the coaction induced by $f$, we first find a fibrant replacement of $f:X\to Y$ in the category $\cat{M}/Y$.  Since the identity map on $Y$ is the terminal object in $\cat M/ Y$,  a fibrant replacement of $f$ consists of a commutative diagram in $\cat{M}$
$$\xymatrix{X\wecof \ar [dr]_{f}& X'\fib ^{p}\ar [d]^p&Y. \ar [dl]^{=}\\&Y}$$
A model of the homotopy coinvariants of $X$ is then
$$X^{hco\, Y}=(X')^{co\, Y}=\operatorname{equal}(X'\egal {(X'\times p)\Delta}{(X'\times \eta)} X'\times Y)= p^{-1}(y_{0}).$$
In other words, the space of homotopy coinvariants of the coaction induced by $f$ is exactly the homotopy fiber of $f$.

\subsubsection{Simplicial monoids}\label{subsec:simplmon} As mentioned in Remark \ref{rmk:cartesian-mon}, the category $\cat{sMon}$ of simplicial monoids admits a cofibrantly generated model structure that is right-induced by the forgetful functor $U_{Alg}:\cat {sMon}\to \cat {s Set}$.  Let $H$ be a simplicial monoid, seen as a bimonoid via the diagonal map. Let $A$ be another simplicial monoid, and let $f\in \cat{sMon}(A,H)$, giving rise to a right $H$-coaction on $A$.

Since the identity map on $H$ is the terminal object in $\cat {sMon}/ H$,  a fibrant replacement of $f$ consists of a commutative diagram in $\cat{sMon}$
$$\xymatrix{A\wecof \ar [dr]_{f}& A'\fib ^{p}\ar [d]^p&H. \ar [dl]^{=}\\&H},$$
i.e., $p$ is a simplicial homomorphism and the underlying map of simplicial sets is a Kan fibration, since the model structure of $\cat{sMon}$ is right-induced.  A model of the homotopy coinvariants of $A$ is then
$$A^{hco\, H}=(A')^{co\, H}=\operatorname{equal}(A'\egal {(A'\times p)\Delta}{(A'\times \eta)} A'\times H)= p^{-1}\big(\eta(e)\big).$$
Note that the equalizer is computed in $\cat{sMon}$, but that the forgetful functor to simplicial sets preserves products and equalizers, since it is a right adjoint. In other words, the simplicial monoid of homotopy coinvariants of the coaction induced by $f$ is exactly the homotopy fiber of $f$, which is a simplicial submonoid of $A'$.

\subsubsection{Chain complexes}\label{subsec:chcx}

For the sake of simplicity, we work here over a field $\Bbbk$, though our results probably hold over a principal ideal domain $R$, as long as the comonoid we consider is $R$-flat.  

Let $\cat M=\cat{Ch}_{\Bbbk}$, the category of nonnegatively graded chain complexes of \emph{finite dimensional} $\Bbbk$-vector spaces, also known as \emph{finite-type} chain complexes. There is a model structure on this category for which $\mathsf{WE}_{\cat M}$ is the set of quasi-isomorphisms (chain maps inducing isomorphisms in homology), $\mathsf{Fib}_{\cat M}$ is the set of chain maps that are surjective in positive degrees and cofibrations are degreewise injections \cite{hovey}.   Endowed with the usual tensor product of chain complexes, $\cat M$ is a monoidal model category, satisfying the monoid axiom.   The unit of the tensor product is just $\Bbbk$ itself, considered as a chain complex concentrated in degree 0.   

For any chain complex $X$, finite products of chain complexes commute with $-\otimes X$, since a finite product of chain complexes is isomorphic to the finite coproduct of the same chain complexes, and $-\otimes X$ commutes with colimits. Furthermore, since we are working over a field and therefore $-\otimes X$ is left and right exact, equalizers commute with $-\otimes X$ as well.  We conclude that all finite limits commute with $-\otimes X$.

Let $C$ be a comonoid in $\cat M$, i.e.,  a chain (or dg) coalgebra.    Since all finite limits commute with $-\otimes C$, the category $\cat{Comod}_{C}$ of $C$-comodules is finitely complete.  Furthermore, $U_{C}$ creates colimits in $\cat{Comod}_{C}$, since $-\otimes C$ commutes with colimits,  so $\cat{Comod}_{C}$ is cocomplete as well.

A slightly more general existence result holds for limits in $\cat {Comod}_{C}$.  Given a family $\mathfrak M=\{M_{\frak a}\mid\mathfrak a\in \mathfrak A\}$ of $C$-comodules such that 
$$\sum _{\mathfrak a\in \mathfrak A} \dim (M_{\mathfrak a})_{n}<\infty$$
for all $n$,
the product of chain complexes  $\prod_{\mathfrak a\in \mathfrak A}M_{\mathfrak a}$ is a degreewise direct sum. Therefore, $(\prod_{\mathfrak a\in \mathfrak A}M_{\mathfrak a})\otimes C \cong \prod_{\mathfrak a\in \mathfrak A}(M_{\mathfrak a}\otimes C)$.  It follows that $\prod_{\mathfrak a\in \mathfrak A}M_{\mathfrak a}$ admits a natural comultiplication with respect to which it is the product of the family $\mathfrak M$ in $\cat{Comod}_{C}$.

For all $n\geq 1$, let $D^n$ denote the chain complex that is $\Bbbk$-free on a generator $x_{n-1}$ in degree $n-1$ and on a generator $y_{n}$ in degree $n$, with differential $d$ satisfying $dy_{n}=x_{n-1}$.  For all $n\geq 0$, let $S^{n}$ denote the chain complex that is $\Bbbk$-free on one generator $y_{n}$ of degree $n$, and let $p_{n}:D^n\to S^n$ denote the obvious projection map.  It is not difficult to check that $\cat M$ admits a Postnikov presentation (Definition \ref{defn:postnikov}), where 
$$\mathsf X=\{p_{n}:D^n\to S^n\mid n\geq 1\} \cup \{p_{0}:0 \to S^0\}\cup \{p_{n}':S^n\to 0\mid n\geq 0\} $$
and 
$$\mathsf Z=\{q_{n}:D^n\to 0\mid n\geq 1\}.$$

The goal of this section is to prove the following existence result.

\begin{thm}\label{thm:modcat-chcomod} Let $C$ be a chain coalgebra that is $1$-connected (i.e., $C_{0}=\Bbbk$ and $C_{1}=0$) and coaugmented.  The forgetful functor $U_{C}:\cat {Comod}_{C}\to \cat M$ left-induces a model structure on $\cat{Comod}_{C}$ with Postnikov presentation $(\mathsf X\otimes C, \mathsf Z\otimes C)$.
\end{thm}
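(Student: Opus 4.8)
The plan is to verify the hypotheses of Theorem \ref{thm:post-comod} for $\cat M = \cat{Ch}_{\Bbbk}$ and the given coalgebra $C$, so that both the model structure and the Postnikov presentation $(\mathsf X\otimes C, \mathsf Z\otimes C)$ follow at once. Finite bicompleteness of $\cat{Comod}_{C}$ has already been established above (finite limits commute with $-\otimes C$ over a field, and $U_{C}$ creates colimits), so it remains to check the inclusion $\mathsf{Post}_{\mathsf Z\otimes C}\subset\mathsf W$ and the existence of the two kinds of Postnikov factorization. For the inclusion, observe that each generator $q_{n}\otimes C\colon D^n\otimes C\to 0$ of $\mathsf Z\otimes C$ is, after applying $U_{C}$, a surjection with acyclic kernel $D^n\otimes C$ (the complex $D^n$ is contractible and $-\otimes C$ preserves quasi-isomorphisms over a field). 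Pulling back such a map along any morphism again yields a surjection with that same acyclic kernel, hence a surjective quasi-isomorphism; since $U_{C}$ preserves the finite and finite-type limits involved, every map in $\mathsf{Post}_{\mathsf Z\otimes C}$ becomes a surjective quasi-isomorphism under $U_{C}$, i.e.\ lies in $\mathsf W$.

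For the type (a) factorizations I would appeal to Lemma \ref{lem:postfact}, taking $\mathsf Z$ as specified; the factorization required of $\cat M$ is exactly part of the Postnikov presentation $(\mathsf X,\mathsf Z)$ of $\cat{Ch}_{\Bbbk}$. Its four hypotheses are all immediate here: (1) the coaction $\rho\colon M\to M\otimes C$ is split injective via the counit of the coaugmented coalgebra $C$, hence a cofibration; (2) $-\otimes C$ preserves quasi-isomorphisms and degreewise injections because $\Bbbk$ is a field; (3) if $U_{C}i$ is injective then so is $(U_{C}i, U_{C}g)$, so $(i,g)\in\mathsf C$; and (4) $-\otimes C$ carries a Postnikov $\mathsf Z$-tower to a Postnikov $\mathsf Z\otimes C$-tower because it preserves the relevant pullbacks and limits, giving $\mathsf{Post}_{\mathsf Z}\otimes C\subset\mathsf{Post}_{\mathsf Z\otimes C}$. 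Lemma \ref{lem:postfact} then produces the factorization $f = pi$ with $i\in\mathsf C$ and $p\in\mathsf{Post}_{\mathsf Z\otimes C}$.

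The main obstacle is the existence of the type (b) factorizations, which Lemma \ref{lem:postfact} does not cover and which must be built by hand as a \emph{semi-cofree resolution}. Given $f\colon M\to N$ in $\cat{Comod}_{C}$, I would construct $W$ as the limit of a tower $N = W_{0}\leftarrow W_{1}\leftarrow\cdots$ of $C$-comodules in which each stage $W_{s+1}\to W_{s}$ is a pullback of a map (or product of maps) from $\mathsf X\otimes C$, together with compatible morphisms $j_{s}\colon M\to W_{s}$ over $N$ with $j_{0} = f$. Attaching cofree cells $S^n\otimes C$ (pullbacks of $p_{n}'\otimes C\colon S^n\otimes C\to 0$) creates the cycles needed to make $U_{C}j$ surjective on homology, while pullbacks of $p_{n}\otimes C\colon D^n\otimes C\to S^n\otimes C$ trivialize the classes obstructing injectivity, so that $j = \lim_{s} j_{s}$ is a quasi-isomorphism; each $U_{C}j_{s}$ is assembled from injections, hence $j\in\mathsf C$, and $q\in\mathsf{Post}_{\mathsf X\otimes C}$ by construction. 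The crucial point — and where $1$-connectivity of $C$ enters decisively — is convergence: since $C_{0} = \Bbbk$ and $C_{1} = 0$, each cofree comodule $S^n\otimes C$ is concentrated in degrees $n$ and $\geq n+2$, so cells introduced to correct homology in a given degree affect only strictly higher degrees. Consequently, in each fixed degree the tower stabilizes after finitely many stages, the limit is computed degreewise (and is preserved by $U_{C}$ and by $-\otimes C$ thanks to the finite-type hypothesis), and the resulting $j$ is an acyclic cofibration. Feeding these factorizations and the verified inclusion $\mathsf{Post}_{\mathsf Z\otimes C}\subset\mathsf W$ into Theorem \ref{thm:post-comod} yields the claimed left-induced model structure with Postnikov presentation $(\mathsf X\otimes C,\mathsf Z\otimes C)$.
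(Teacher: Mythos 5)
Your overall strategy coincides with the paper's: reduce to Theorem \ref{thm:post-comod}, get the type (a) factorizations from Lemma \ref{lem:postfact} (your verification of its four hypotheses is exactly the paper's), and check $\mathsf{Post}_{\mathsf Z\otimes C}\subset\mathsf W$ by noting that each pullback stage is a surjective quasi-isomorphism. One point you elide there: a Postnikov tower is a transfinite composition, and inverse limits of towers do \emph{not} preserve quasi-isomorphisms in general, so passing from ``each stage is a surjective quasi-isomorphism'' to ``the composition of the tower is a quasi-isomorphism'' requires the Mittag-Leffler argument (the paper invokes Theorem \ref{thm:weibel}, quoted from Weibel); since your stages are surjective the hypothesis is satisfied, but the step must be cited, not merely asserted via ``$U_C$ preserves the limits involved.''

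The genuine gap is in the type (b) factorization, and it is not merely that your construction is a sketch: the mechanism you describe assigns the two kinds of generating maps the \emph{opposite} of their actual homological roles. Write $j\colon M\to W$ for the map being built. Taking a product of $W$ with a cofree comodule $S^n\otimes C$ (pullback of $p_n'\otimes C\colon S^n\otimes C\to 0$) \emph{enlarges} $H_*(W)$, so it cannot ``create cycles making $H_*j$ surjective''; what it can do is repair \emph{injectivity}, by lifting $j$ along a map $M\to S^n\otimes C$ that detects a class of $H_*M$ killed at the previous stage (here $1$-connectivity enters: $(S^n\otimes C)_{n+1}=0$, so the detecting cycle cannot bound). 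Dually, pulling back $D^{n+1}\otimes C\to S^{n+1}\otimes C$ along a map of comodules vanishing on the image of $M$ turns unwanted cycles of $W$ into non-cycles, i.e.\ repairs \emph{surjectivity}; it cannot ``trivialize classes obstructing injectivity,'' which live in $H_*M$ and cannot be altered. Even with the roles corrected, real bookkeeping remains, since each $S^n\otimes C$ factor injects new excess homology in degrees $\geq n+2$ that must be killed later. The paper's proof organizes this so the interleaving never arises: a base-case lemma factors $f$ through $(\rho,f)\colon M\to (M\otimes C)\times N$, which makes $H_*$ of the first map injective in \emph{all} degrees at once (because $(M\otimes\ve)\rho=\operatorname{id}$ splits $H_*\rho$), so that afterwards only surjectivity defects remain; these are exactly the homology of the cokernel $K$, and Lemma \ref{lem:inductive-step} kills $H_{n+1}K$ by pulling back the based path object $P(H_{n+1}K,0)\otimes C\to H_{n+1}K\otimes C$ (a product of copies of $D^{n+1}\otimes C\to S^{n+1}\otimes C$), with $1$-connectivity of $C$ guaranteeing $M'_{n+1}=N_{n+1}$ and hence that each degree stabilizes along the induction. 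An argument at this level of precision—either the paper's two lemmas or a corrected, carefully bookkept version of your cell-attachment scheme—is what the proof actually requires.
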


Our strategy for proving this theorem is to show that the hypotheses of Theorem \ref{thm:post-comod} are satisfied.  In this proof we use freely  the terminology and results of section \ref{subsec:induced}. 

Note that since $\mathsf W=U_{C}^{-1}(\mathsf{WE}_{\cat M})$, a morphism of $C$-comodules is in $\mathsf W$ if and only if it is a quasi-isomorphism.  Similarly, a morphism of $C$-comodules is an element of $\mathsf C=U_{C}^{-1}(\mathsf{Cof}_{\cat M})$ if and only if it is degreewise injective.

In proving that the hypotheses of Theorem \ref{thm:post-comod} are satisfied, we need to compute the homology of certain inverse limits of towers of chain complexes.  Since the homology of an inverse limit is in general not isomorphic to the inverse limit of the homology groups, we call upon the following useful, classical result, which is an immediate consequence of Proposition 3.5.7 and Theorem 3.5.8 in  \cite {weibel}.

Let $\cat C$ denote either the category of chain complexes or the category $\cat{Ab}$ of abelian groups. Recall that a tower 
$$X:\lambda ^{op} \to \cat C,$$
i.e., 
$$...\to X_{\beta +1}\xrightarrow {p_{\beta +1}} X_{\beta}\to \cdots \xrightarrow {p_{1}} X_{0}$$  satisfies the \emph{Mittag-Leffler condition} if for all $\beta<\lambda$, there exists $\gamma >\beta$ such that the image of the composite $X_{\gamma'}\to X_{\beta}$ is equal to the image of $X_{\gamma}\to X_{\beta}$ for all $\gamma'>\gamma$.  For example, if $p_{\beta}$ is a surjection for all $\beta$, then the tower satisfies the Mittag-Leffler condition.

\begin{thm}\label{thm:weibel} Let $X:\lambda ^{op} \to \cat M$ be a tower of chain complexes satisfying the Mittag-Leffler condition.  If the induced tower $H_{n}X:\lambda ^{op }\to \cat {Ab}$ also satisfies the Mittag-Leffler condition, for all $n\geq 0$, then  $\lim _{\lambda^{op}}X_{\beta}\to X_{0}$ is a quasi-isomorphism.
\end{thm}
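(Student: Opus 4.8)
The plan is to deduce the statement from the Milnor-type $\lim^{1}$ exact sequence for the homology of an inverse limit of chain complexes (Theorem~3.5.8 of \cite{weibel}), using the Mittag-Leffler condition on homology to annihilate the $\lim^{1}$ term via Proposition~3.5.7 of \cite{weibel}, and only then comparing the resulting inverse limit of homology groups with $H_{n}(X_{0})$.

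First I would apply Theorem~3.5.8 of \cite{weibel} to the tower $X$. Since $X$ is Mittag-Leffler degreewise, the systems of chain groups have vanishing $\lim^{1}$, so for each $n\geq 0$ one obtains a natural short exact sequence
$$0\to {\lim}^{1}_{\lambda^{op}} H_{n+1}(X_{\beta})\to H_{n}\bigl(\lim_{\lambda^{op}}X_{\beta}\bigr)\to \lim_{\lambda^{op}}H_{n}(X_{\beta})\to 0.$$
(For the transfinite indexing allowed here one first checks that the cited countable statements extend to $\lambda^{op}$, or passes to a cofinal subtower; the Mittag-Leffler hypotheses are precisely what make this step harmless.) Next I would invoke Proposition~3.5.7 of \cite{weibel}, which says that a Mittag-Leffler tower of abelian groups has vanishing $\lim^{1}$. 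By hypothesis each homology tower $H_{n+1}X$ is Mittag-Leffler, so the left-hand term vanishes and the sequence collapses to a natural isomorphism
$$H_{n}\bigl(\lim_{\lambda^{op}}X_{\beta}\bigr)\xrightarrow{\ \cong\ }\lim_{\lambda^{op}}H_{n}(X_{\beta}).$$

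It then remains to compare $\lim_{\lambda^{op}}H_{n}(X_{\beta})$ with $H_{n}(X_{0})$. The canonical projection $\lim_{\lambda^{op}}X_{\beta}\to X_{0}$ induces on homology exactly the composite of the edge isomorphism above with evaluation at the index $0$, so the theorem reduces to showing that the evaluation map $\lim_{\lambda^{op}}H_{n}(X_{\beta})\to H_{n}(X_{0})$ is an isomorphism. This is where the Mittag-Leffler condition on the homology towers must be used essentially: it stabilizes the images $\mathrm{im}\bigl(H_{n}(X_{\gamma})\to H_{n}(X_{0})\bigr)$, and I would leverage this stabilization to identify the inverse limit with $H_{n}(X_{0})$ and to check that evaluation at $0$ is injective.

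I expect this last comparison to be the main obstacle. The two homological-algebra steps are immediate from \cite{weibel}, but matching the abstract inverse limit $\lim_{\lambda^{op}}H_{n}(X_{\beta})$ with the homology of the bottom complex $X_{0}$---rather than merely with an abstract stable value---is the delicate point, and the place where the hypotheses do their real work. In the towers we actually construct in Section~\ref{subsec:ex-htpycoinv}, whose transition maps are quasi-isomorphisms, the homology towers are pro-constant, so evaluation at $0$ is visibly an isomorphism and this final step is automatic.
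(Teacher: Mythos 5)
Your first two steps are precisely the paper's entire proof: the paper offers no argument beyond citing Proposition 3.5.7 and Theorem 3.5.8 of Weibel and calling the theorem an ``immediate consequence,'' which yields exactly the isomorphism $H_{n}(\lim_{\lambda^{op}}X_{\beta})\cong\lim_{\lambda^{op}}H_{n}(X_{\beta})$ that you derive. The step you flag as ``the main obstacle''---identifying $\lim_{\lambda^{op}}H_{n}(X_{\beta})$ with $H_{n}(X_{0})$ via evaluation at $0$---is a genuine gap, and in fact it cannot be closed under the stated hypotheses. The Mittag-Leffler condition on the homology towers only forces the images $\im\big(H_{n}(X_{\gamma})\to H_{n}(X_{\beta})\big)$ to stabilize; it does not force the stable image to be all of $H_{n}(X_{0})$, nor evaluation at $0$ to be injective. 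Concretely, take the tower $\Bbbk\leftarrow 0\leftarrow 0\leftarrow\cdots$ of complexes concentrated in degree $0$ with zero differentials and zero transition maps: both the chain-level and the homology-level towers satisfy Mittag-Leffler (all images stabilize at $0$), yet $\lim_{\beta}X_{\beta}=0\to X_{0}=\Bbbk$ is not a quasi-isomorphism. So the theorem as stated is false; the fault lies in the statement you were asked to prove, not in your argument, and your sketch has in effect uncovered an overstatement that the paper's one-line citation passes over silently.

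What is true, and what the paper actually uses, is the weaker conclusion $H_{n}(\lim_{\lambda^{op}}X_{\beta})\cong\lim_{\lambda^{op}}H_{n}(X_{\beta})$ together with the remark recorded immediately after the theorem: when every transition map is a surjective quasi-isomorphism, the homology towers consist of isomorphisms, hence are pro-constant with value $H_{n}(X_{0})$, and evaluation at $0$ is then visibly an isomorphism---exactly the situation of the $\mathsf Z\otimes C$-Postnikov towers in section \ref{subsec:chcx}. Your closing observation makes precisely this point, so your argument is complete for every application the paper makes of the theorem; the honest repair is to add a hypothesis to the statement (e.g., that the transition maps are quasi-isomorphisms, or that each tower $H_{n}X$ is pro-constant at $H_{n}(X_{0})$). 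One further caution applying equally to your sketch and to the paper: Weibel's results concern $\mathbb{N}$-indexed towers, and for a general ordinal $\lambda$ the $\lim^{1}$-sequence needs separate justification; your fallback of passing to a countable cofinal subtower works only when $\lambda$ has cofinality $\omega$.
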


In particular, if $X$ is a tower of surjective quasi-isomorphisms, then $\lim _{\lambda^{op}}X_{\beta}\to X_{0}$ is a quasi-isomorphism.

To prove that $\mathsf{Post}_{\mathsf Z\otimes C}\subset \mathsf W$, observe first that every element $q_{n}: D^{n}\to 0$ of $\mathsf Z$ is a surjective quasi-isomorphism, and therefore $q_{n}\otimes C$ is a surjective quasi-isomorphism as well. It follows that for all $C$-comodules $M$, the projection map $M\times (D^{n}\otimes C)\to M$, which comes from pulling back $M\to 0$ and $q_{n}\otimes C$, is also a surjective quasi-isomorphism.  Thus, in any $\mathsf Z\otimes C$-Postnikov tower 
$$...\to M_{\beta+1}\xrightarrow{\bar q_{\beta +1}}M_{\beta}\to ...,$$
each $\bar q_{\beta+1}$ is a surjective quasi-isomorphism, and therefore the composition of the tower $\lim _{\lambda^{op}} M_{\beta}\to M_{0}$, if it exists, is also a quasi-isomorphism.

We now establish the existence of the required Postnikov factorizations, i.e., hypotheses (a) and (b) of Theorem \ref{thm:post-comod}.  Hypothesis (a) is proved by showing that the hypotheses of Lemma \ref{lem:postfact} are satisfied.  We begin by noting that since $(\mathsf X, \mathsf Z)$ is a Postnikov presentation of $\cat M$, for all $f\in \mor \cat M$,
there exist $j\in \mathsf{Cof}_{\cat M}$ and $q\in \mathsf{Post}_{ \mathsf Z}$ with $f=qj$.

Hypothesis (1) of Lemma \ref{lem:postfact} is satisfied, since, for every $C$-comodule $(M, \rho)$, the composite $(M\otimes \ve)\rho$ must be the identity on $M$, and therefore $\rho$ must be injective. Hypothesis (2) holds since we are working over a field, while hypothesis (3) follows from the observation that if $i$ is a degreewise injective map of comodules and $g$ is any map of comodules, then $(i,g)$ is necessarily degreewise injective.  Finally, observe that $\mathsf {Post}_{\mathsf Z}$ consists of projection maps $X\times \prod_{\beta <\lambda} D^{n_{\beta}}\to X$ for various ordinals $\lambda$ and various objects $X$ in $\cat M$.  Note that since we are working with finite-type complexes, for any $n$ the set $\{\beta<\lambda\mid n_{\beta }=n\}$ is finite. On the other hand, by our finite-type assumption,
$$\big(X\times \prod_{\beta <\lambda} D^{n_{\beta}}\big)\otimes C\cong (X\otimes C)\times  \prod_{\beta <\lambda} (D^{n_{\beta}}\otimes C),$$
which is obviously an element in $\mathsf {Post}_{\mathsf Z\otimes C}$.  Thus, hypothesis (4) is also satisfied, and therefore Lemma \ref{lem:postfact} holds.

We prove the existence of the second sort of Postnikov factorization (hypothesis (b) of Theorem \ref{thm:post-comod}) by an inductive argument, which is essentially dual to the usual argument for the existence of semi-free models for modules over a chain algebra \cite{fht}, though one has to be careful.  The argument is formulated in terms of a certain successive approximations to the weak equivalences in $\cat M$ and $\cat {Comod}_{C}$, defined as follows.

\begin{defn} Let $n\in \mathbb N$.  An \emph{$n$-equivalence} in $\cat M$ is a morphism of chain complexes $f:X\to Y$ such that $H_{*}f$ is degreewise injective and $H_{k} f$ is an isomorphism for all $k\leq n$. A morphism $g:M\to N$ of $C$-comodules is a \emph{$n$-equivalence} if $U_{C}g$ is.
\end{defn}

It is obvious that a weak equivalence is an $n$-equivalence for all $n$.

The next lemma is the base step of the inductive proof of hypothesis (b) of Theorem \ref{thm:post-comod}.

\begin{lem} Let $C$ be a $1$-connected chain coalgebra. If $f:M\to N$ is any morphism of $C$-comodules, then there exists a degreewise-injective $0$-equivalence $i:M\to M'$ and a map  $p:M'\to N$ in $\mathsf{Post}_{\mathsf X\otimes C}$ such that
$f=pi$.
\end{lem}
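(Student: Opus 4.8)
The plan is to construct the factorization by dualizing the inductive construction of semi-free (or minimal) models of modules over a chain algebra, building $M'$ as the limit of a Postnikov tower over $N$ whose successive stages are pullbacks of the cofree cells in $\mathsf X\otimes C$. The naive dual of a free resolution already exhibits the central difficulty. Namely, $(f,\rho_M)\colon M\to N\times(M\otimes C)$ is degreewise injective, because the coaction $\rho_M$ is (since $(M\otimes\ve)\rho_M=\mathrm{id}_M$), and the projection to $N$ is the pullback of $(M\otimes C)\to 0$, which lies in $\mathsf{Post}_{\mathsf X\otimes C}$: writing the finite-type complex $M$ as a sum of cells $S^n$ and $D^m$, each $S^n\otimes C$-factor is added via $p_n'$ and each $D^m\otimes C$-factor is added by first forming $S^m\otimes C$ via $p_m'$ and then pulling back $p_m$ along the projection. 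However, by the Künneth theorem and $H_0C=\Bbbk$, the factor $M\otimes C$ contributes a spurious summand $H_0(M\otimes C)\cong H_0M$, so $H_0(f,\rho_M)=(H_0f,\mathrm{id})$ has image merely the graph of $H_0f$ inside $H_0N\oplus H_0M$ and is \emph{not} an isomorphism. The real content of the lemma is therefore to attach only as many cofree cells as are needed for degreewise injectivity, while cancelling the extra degree-zero homology they introduce.

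For the base step I would accordingly secure degreewise injectivity of $i$ using the coaction as above, and then cut $N\times(M\otimes C)$ down, still within $\mathsf{Post}_{\mathsf X\otimes C}$, to an object $M'$ whose degree-zero homology is exactly the graph of $H_0f$. Since every element of $M_0$ is a cycle, one has $H_0X=X_0/dX_1$ for any $X$, so the target $H_0i$-isomorphism amounts to arranging $M'_0=M_0+dM'_1$ and $dM'_1\cap M_0=dM_1$. I would achieve this by attaching $D^1\otimes C$-cells (pullbacks of $p_1$, set up after an auxiliary $p_1'$-product) to turn the unwanted degree-zero cycles into boundaries, and by taking kernels of suitable maps to $S^0\otimes C$ (pullbacks of $p_0$), choosing all the structure maps of these pullbacks by lifting bases of the relevant homology and boundary groups. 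Here the $1$-connectedness of $C$ is essential: because $C_1=0$, the cells $S^0\otimes C$ and $D^1\otimes C$ have no components in the degrees that would otherwise let these corrections interfere with one another, or destroy a class of $H_*M$.

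The verifications then follow familiar lines. Degreewise injectivity of $i$ reduces, exactly as in the proof of Lemma \ref{lem:postfact}, to injectivity of $\rho_M$ together with the fact that a map induced into a product one of whose components is degreewise injective is again degreewise injective. That $p$ lies in $\mathsf{Post}_{\mathsf X\otimes C}$ is immediate from its construction as a tower of pullbacks of cells of $\mathsf X\otimes C$; where the tower is transfinite I would compute the homology of its limit by Theorem \ref{thm:weibel}, after noting that the Mittag--Leffler condition holds automatically, the structure maps being the surjections underlying products with, and fillings by, cofree cells. Finally, the homological bookkeeping of the previous paragraph shows that $H_0i$ is an isomorphism onto the graph and that $H_*i$ remains degreewise injective, i.e. that $i$ is a $0$-equivalence, so that the hypotheses of the factorization feeding Theorem \ref{thm:post-comod} are met in the base case.

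I expect the main obstacle to be precisely this simultaneous bookkeeping: one must add enough cofree cells for $i$ to be degreewise injective, yet arrange the accompanying $D^1\otimes C$-fillings and $S^0\otimes C$-kernels so that in homology the cells neither create a new class in degree $0$ (the pitfall exhibited by $M\otimes C$ above) nor annihilate an existing class of $M$ in any degree, all while keeping every stage a pullback of a cell in the prescribed set $\mathsf X\otimes C$ rather than of a more convenient map such as $q_n\otimes C$. The $1$-connectedness of $C$ is the feature I would lean on most heavily to make this balancing tractable in degree zero; supplying the analogous control in higher degrees is exactly what the subsequent inductive steps, together with the Mittag--Leffler estimates of Theorem \ref{thm:weibel}, are designed to do.
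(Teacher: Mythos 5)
Your first paragraph is exactly the paper's opening move: factor $f$ as $M\xrightarrow{(\rho,f)}(M\otimes C)\times N\xrightarrow{p'}N$, check that $p'\in\mathsf{Post}_{\mathsf X\otimes C}$ and that $i'=(\rho,f)$ is degreewise injective and injective on homology, and identify the excess degree-zero homology as the remaining problem. The gap lies in your proposed repair. A pullback of $p_{1}\otimes C\colon D^{1}\otimes C\to S^{1}\otimes C$ along a comodule map $g\colon Y\to S^{1}\otimes C$ can never ``turn an unwanted degree-zero cycle into a boundary.'' Because $C_{0}=\Bbbk$ and $C_{1}=0$, the pullback $P=Y\times_{S^{1}\otimes C}(D^{1}\otimes C)$ satisfies, canonically, $P_{0}=Y_{0}\oplus\Bbbk x_{0}$ and $P_{1}\cong Y_{1}$, with differential $P_{1}\to P_{0}$ given by $y\mapsto dy+g_{1}(y)\,x_{0}$ for a linear functional $g_{1}$ determined by $g$; hence an old cycle $\zeta\in Y_{0}$ bounds in $P$ only if $\zeta\in dY_{1}$ already, and a dimension count gives $\dim H_{0}(P)\geq\dim H_{0}(Y)$. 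What these pullbacks actually do is destroy degree-one classes (old cycles acquire nonzero differential) or create new degree-zero classes. Making old cycles bound is a pushout (cell-attachment) phenomenon; inside a Postnikov tower the only available moves are adding new elements and twisting differentials of old elements so that they cease to be cycles, and in degree zero the latter is impossible. So of your two mechanisms, only the $S^{0}\otimes C$-kernels (pullbacks of $p_{0}\otimes C$) can remove degree-zero homology, and they must do all of the work; ironically, the $1$-connectedness you invoke is precisely what renders the $D^{1}\otimes C$ half of your scheme powerless.

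That leaves exactly the question you defer---how to choose the map into the product of $S^{0}\otimes C$'s---and that choice is the content of the proof, not an afterthought of bookkeeping. The paper does it in one stroke. Write $N'=(M\otimes C)\times N$ and let $K$ be the cokernel of $i'$ in $\cat{Comod}_{C}$, whose underlying complex is the cokernel of the underlying chain map since $U_{C}$ preserves colimits; injectivity of $H_{*}i'$ identifies $H_{0}K\cong\operatorname{coker}H_{0}i'$ with the excess. Let $k\colon N'\to H_{0}K$ be the composite of quotient maps $N'\to K\to K_{0}\to H_{0}K$, let $k^{\sharp}\colon N'\to H_{0}K\otimes C$ be the corresponding comodule map, and set $M'=\ker k^{\sharp}$, the pullback of $0\to H_{0}K\otimes C$ (a product of copies of $p_{0}\otimes C$) along $k^{\sharp}$. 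Then $ki'=0$ makes $i'$ corestrict to $i\colon M\to M'$; since $C_{1}=0$ one has $M'_{1}=N'_{1}$, so $\ker k_{0}=i'(M_{0})+dN'_{1}$ yields that $H_{0}i$ is an isomorphism; and $H_{*}i$ stays injective because $M'$ is a subcomplex of $N'$ and $H_{*}i'$ is injective. In particular no $D^{1}\otimes C$-cells, no transfinite towers, and no Mittag--Leffler argument (Theorem \ref{thm:weibel}) are needed in this base step. Since your proposal builds in the unusable $D^{1}\otimes C$ mechanism and leaves the choice of the $S^{0}\otimes C$ attaching maps---the heart of the matter---unresolved, it does not yet prove the lemma.
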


\begin{proof} Observe first that $f:M\to N$ factors as 
$$M\xrightarrow{(\rho, f)} (M\otimes C)\times N\xrightarrow {p'} N,$$
where $p'$ is the projection map. Since $U_{C}M\to 0$ is necessarily the composition of an $\mathsf X$-Postnikov tower, and $-\otimes C$ commutes with degreewise-finite limits,  $p'\in \mathsf{Post}_{\mathsf X\otimes C}$.

Let $N'= (M\otimes C)\times N$, and let $i'=(\rho, f)$, which is degreewise injective and induces a degreewise injection in homology, since $H_{*}N'\cong (H_{*}M\otimes H_{*}C)\oplus H_{*}N$.  Let $K$ denote the cokernel of $i'$. Note that since $U_{C}$ is a left adjoint and therefore preserves colimits, the chain complex underlying $K$ is the cokernel of the chain map underlying $f$.  

Viewing $K_{0}$ and $H_{0}K$ as chain complexes concentrated in degree 0, consider the sequence of quotient maps
$$N'\to K\to K_{0}\to H_{0}K,$$
the composite of which is a chain map, denoted $k:N'\to H_{0}K$.  Let 
$$k^\sharp:N'\to H_{0}K\otimes C$$ 
denote the corresponding comodule map. Note that $H_{0}K\otimes C$ is isomorphic to $\prod_{\mathfrak a\in \mathfrak A}S^0$, where $\mathfrak A$ is a basis of $H_{0}K$.

Consider the pullback diagram
$$\xymatrix{M'\ar [d]^{p''}\ar[r]& 0\ar[d]\\ N'\ar [r]^(0.4){k^\sharp}& H_{0}K\otimes C}$$
in $\cat {Comod}_{C}$.  The fact that $0\to H_{0}K\otimes C$ is a product of maps in $\mathsf X\otimes C$ implies that $p''\in \mathsf{Post}_{\mathsf  X\otimes C}$.

To conclude, we let $i:M\to M'$ be the morphism induced by the pair $i':M\to N'$ and $0:M\to 0$, and we let $p=p'p''$.  It is easy to check that $i$ is degreewise injective and a $0$-equivalence.  Moreover, $p\in  \mathsf{Post}_{\mathsf  X\otimes C}$, since it is the composite of two $\mathsf X\otimes C$-Postnikov towers.  Finally, $f=pi$, as desired.
\end{proof}

The inductive step of the argument proceeds as follows.  

\begin{lem}\label{lem:inductive-step} Let $C$ be a $1$-connected chain coalgebra, and let $n\in \mathbb N$. If $f:M\to N$ is  a degreewise-injective $n$-equivalence of $C$-comodules, then there exists a degreewise injective $(n+1)$-equivalence $i:M\to M'$ and a map  $p:M'\to N$ in $\mathsf{Post}_{\mathsf X\otimes C}$ such that
$f=p i$. 
\end{lem}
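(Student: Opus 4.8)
The plan is to mimic the base case but to use the cell $p_{n+1}\colon D^{n+1}\to S^{n+1}$ of $\mathsf X$ in place of $p_{0}\colon 0\to S^{0}$, and to exploit the $1$-connectivity of $C$ in the one delicate degree. Since $f$ is degreewise injective, I would first form its cokernel $K$ in $\cat{Comod}_{C}$; because $U_{C}$ preserves colimits, the underlying complex of $K$ is $\operatorname{coker}(U_{C}f)$. As $H_{*}f$ is injective, the long exact sequence in homology collapses into short exact sequences $0\to H_{k}M\to H_{k}N\to H_{k}K\to 0$, so the hypothesis that $f$ is an $n$-equivalence forces $H_{k}K=0$ for $k\le n$ and identifies $H_{n+1}K$ with $\operatorname{coker}(H_{n+1}f)$. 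Since we work with finite-type complexes, $H_{n+1}K$ is finite dimensional; let $\mathfrak A$ be a basis.

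Next I would build the obstruction map. Viewing $H_{n+1}K$ as a chain complex concentrated in degree $n+1$, choose a chain map $K\to H_{n+1}K$ inducing the projection on $H_{n+1}$ (possible over a field by extending the quotient $Z_{n+1}K\to H_{n+1}K$ by zero off the cycles), and precompose with $N\to K$ to obtain $k\colon N\to H_{n+1}K$. Let $k^{\sharp}\colon N\to H_{n+1}K\otimes C\cong\prod_{\mathfrak a\in\mathfrak A}(S^{n+1}\otimes C)$ be its transpose, and define $M'$ by the pullback in $\cat{Comod}_{C}$ of $k^{\sharp}$ along $\prod_{\mathfrak a}(p_{n+1}\otimes C)\colon\prod_{\mathfrak a}(D^{n+1}\otimes C)\to\prod_{\mathfrak a}(S^{n+1}\otimes C)$. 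The resulting $p\colon M'\to N$ is a pullback of a finite product of maps in $\mathsf X\otimes C$, hence $p\in\mathsf{Post}_{\mathsf X\otimes C}$. Because $N\to K$ kills the image of $f$, the composite $kf$ is the zero chain map, so $i=(f,0)\colon M\to M'$ is a well-defined comodule map with $f=pi$, and $i$ is degreewise injective since $f$ is.

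It then remains to check that $i$ is an $(n+1)$-equivalence. Injectivity of $H_{*}i$ is automatic, since $H_{*}f=H_{*}p\circ H_{*}i$ is injective. For the isomorphism range I would use the short exact sequence $0\to\prod_{\mathfrak a}(S^{n}\otimes C)\to M'\xrightarrow{p}N\to 0$, whose kernel is $\prod_{\mathfrak a}\ker(p_{n+1}\otimes C)=\prod_{\mathfrak a}(S^{n}\otimes C)$. Here $H_{m}(S^{n}\otimes C)\cong H_{m-n}C$, so by the $1$-connectivity of $C$ this kernel has homology $\cong H_{n+1}K$ concentrated in degree $n$ in the relevant range and, crucially, vanishing homology in degree $n+1$ (because $H_{1}C=0$). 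Feeding this into the long exact sequence makes $H_{k}i$ an isomorphism for $k\le n-1$ immediately; in degree $n+1$ the vanishing of $H_{n+1}$ of the kernel forces $H_{n+1}M'\cong\ker\partial=\im H_{n+1}f$, so $H_{n+1}i$ is an isomorphism.

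The hard part is the degree $n$ computation, which is exactly the ``one has to be careful'' point: I must show that the connecting homomorphism $\partial\colon H_{n+1}N\to H_{n}\big(\prod_{\mathfrak a}(S^{n}\otimes C)\big)\cong H_{n+1}K$ is the projection onto $\operatorname{coker}(H_{n+1}f)$, for then its surjectivity absorbs the potential extra homology in degree $n$ and makes $H_{n}p$, hence $H_{n}i$, an isomorphism. I would identify $\partial$ by naturality of connecting maps under pullback: $\partial=\partial_{D}\circ H_{n+1}(k^{\sharp})$, where $\partial_{D}$ is the connecting map of $0\to S^{n}\otimes C\to D^{n+1}\otimes C\to S^{n+1}\otimes C\to 0$. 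Since $D^{n+1}$ is acyclic, so is $D^{n+1}\otimes C$, whence $\partial_{D}$ is an isomorphism; and since $\ve$ is a homology isomorphism in degree $0$ while $H_{1}C=0$, the map $H_{n+1}(k^{\sharp})$ agrees with $H_{n+1}k$, namely the projection $H_{n+1}N\to H_{n+1}K$. Combining these identifies $\partial$ as claimed and completes the induction.
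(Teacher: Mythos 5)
Your proposal is correct and is essentially the paper's own proof: the same cokernel $K$ with $H_{k}K=0$ for $k\le n$, the same obstruction map $k^{\sharp}\colon N\to H_{n+1}K\otimes C$ built from a section of $Z_{n+1}K\hookrightarrow K_{n+1}$, and the same pullback of $\prod_{\mathfrak a}(p_{n+1}\otimes C)$ along $k^{\sharp}$ defining $p\colon M'\to N$ (the paper merely packages $\prod_{\mathfrak a}D^{n+1}\to\prod_{\mathfrak a}S^{n+1}$ as the ``based path object'' fibration $P(H_{n+1}K,0)\to H_{n+1}K$ before tensoring with $C$). The only difference is how one checks that $i$ is an $(n+1)$-equivalence: the paper unfolds the differential of $M'=\bigl(N\oplus(s^{-1}H_{n+1}K\otimes C),D\bigr)$ explicitly and argues by degree reasons from $C_{1}=0$, whereas you run the long exact sequence of $0\to\prod_{\mathfrak a}(S^{n}\otimes C)\to M'\to N\to 0$ and identify the connecting map by naturality and acyclicity of $D^{n+1}\otimes C$ --- an equivalent computation organized more formally.
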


\begin{proof}  Let $K$ denote the cokernel of $f$, computed in $\cat{Comod}_{C}$.  As in the previous proof, the chain complex underlying $K$ is the cokernel of the chain map underlying $f$.  Considering the long exact sequence in homology induced by the short exact sequence of complexes $0\to M\xrightarrow f N\xrightarrow{} K\to 0$, we see that $H_{k}K=0$ for all $k\leq n$.

Let $Z_{n+1}K$ denote the subspace of cycles in $K$ of degree $n+1$.  Since we are working over a field, we can choose a section $\sigma _{n+1}:K_{n+1}\to Z_{n+1}K$.

Viewing $K_{n+1}$, $Z_{n+1}K$ and $H_{n+1}K$ are chain complexes concentrated in degree $n+1$, consider the sequence of linear maps
$$N\to K\to K_{n+1}\xrightarrow {\sigma_{n+1}} Z_{n+1}K\to H_{n+1}K,$$
where maps other than $\sigma_{n+1}$ are the obvious quotient maps. Let $k: N\to H_{n+1}K$ denote the composite of this sequence, which is a chain map, and let $k^\sharp:N\to H_{n+1}K\otimes C$ be the corresponding morphism of $C$-comodules.

Let $(X,d)$ be a chain complex such that $H_{0}(X,d)=0$. Recall that there is a ``based path object'' construction on $(X,d)$, which is an acyclic chain complex denoted $P(X,d)$, together with a fibration $q:P(X,d)\to (X,d)$.  More precisely, $P(X,d)=(X\oplus s^{-1} (X_{\geq 2}\oplus (\ker d)_{1}), D)$, where $X_{+}$ denotes the positive-degree part of $X$, $s^{-1}$ denotes desuspension, and $q$ is the obvious quotient map .  The differential $D$ is specified by $D(x)=dx-s^{-1}x$ and $D(s^{-1} x)=-s^{-1} dx$.  

Note that the projection $q:P(H_{n+1}K,0)\to H_{n+1}K$ is an element of $\mathsf{Post}_{\mathsf X}$, since it is isomorphic to
$$\prod_{\mathfrak a\in \mathfrak A} D^{n+1} \to \prod_{\mathfrak a\in \mathfrak A} S^{n+1},$$
where $\mathfrak A$ is a basis of $H_{n+1}K$.
Consequently, in the pullback diagram of morphisms in $\cat{Comod}_{C}$
$$\xymatrix{M'\ar [d]^p\ar [r]&P(H_{n+1}K,0)\otimes C\ar [d]^{q\otimes C}\\ N\ar [r]^(0.4){k^\sharp}&H_{n+1}K\otimes C,}$$
the morphism $p$ is an element of $\mathsf{Post}_{\mathsf X\otimes C}$, since $-\otimes C$ commutes with finite products and pullbacks commute with products.

Let $d$ denote the differential on $N$. Unfolding the definition of the path object construction and of the pullback, we see that $$M'=\big(N\oplus (s^{-1} H_{n+1}K\otimes C), D\big),$$
where $Dy=dy-s^{-1}k^\sharp(y)$ for all $y\in N$ and $Ds^{-1}z\otimes c=0$ for all $z\in H_{n+1}K$ and $c\in C$. For degree reasons, if $y\in N_{\leq n}$, then $Dy=dy$, and if $y\in N_{n+1}$, then $Dy=dy-s^{-1}k(y)$.  Furthermore, $Df(x)=df(x)$ for all $x\in M$, since $K$ is the cokernel of $f$.  Finally, since $C$ is $1$-connected, $M'_{n+1}=N_{n+1}$.

Let $i:M\to M'$ be the morphism of $C$-comodules induced by the pair of morphisms $f:M\to N$ and $0:M\to P(H_{n+1}K,0)$.  It is clear that $i$ is degreewise injective, since $f$ is.  Furthermore, the analysis above of the structure of $M'$ shows that $H_{*}i$ is degreewise injective and $H_{k}i$ is an isomorphism for all $k\leq n+1$, i.e., $i$ is an $(n+1)$-equivalence. Since $pi=f$, we can conclude.
\end{proof}

The proof of Theorem \ref{thm:modcat-chcomod} is now complete.

To study homotopy coinvariants of chain coalgebra coactions, we need to understand fibrant replacements in $\cat{Comod}_{C}$.  We now now show that the cobar construction actually gives rise to a fibrant replacement functor on $\cat{Comod}_{C}$.

Let $C$ be a $1$-connected chain coalgebra. Let $M$ and $N$ denote connected chain complexes, endowed with a right $C$-coaction $\rho: M\to M\otimes C$ and left $C$-coaction $\lambda :N\to C\otimes N$.  Let $\Om (M;C;N)$ denote the conormalization of the usual cosimpicial chain complex built from $\rho$, $\lambda$ and the comultiplication on $C$, i.e.,
$$\Om (M;C;N) =(M\otimes Ts^{-1} C_{+}\otimes N, D),$$
where $T$ denotes the tensor algebra functor and (modulo signs, which are given by the Koszul rule)
\begin{align*}
D(x\otimes \si c_{1}|\cdots &|\si c_{n}\otimes y)\\ =&dx\otimes \si c_{1}|\cdots|\si c_{n}\otimes y \pm x_{i}\otimes \si a^{i}|\si c_{1}|\cdots|\si c_{n}\otimes y\\
&\pm x\otimes d_{\Om}(\si c_{1}|\cdots|\si c_{n})\otimes y \\
&\pm x\otimes \si c_{1}|\cdots|\si c_{n}\otimes dy \pm x\otimes \si c_{1}|\cdots|\si c_{n}|\si b^{j}\otimes y_{j}.
\end{align*}
Here, $d$ denotes the differentials on both $M$ and $N$, and $d_{\Om}$ denotes the usual differential on the reduced cobar construction, while $\rho(x)=x_{i}\otimes a^{i}$ and $\lambda (y)=b^{j} \otimes y_{j}$.  Note that $\Om (\Bbbk;C;\Bbbk)$ is the usual reduced cobar construction, $\Om C$.

If $N=C$, then $\Om(M;C;C)$ is naturally a right $C$-comodule, via a ``cofree''  (i.e., cofree when forgetting differentials) coaction 
$$\hat\rho: \Om(M;C;C)\to \Om (M;C;C)\otimes C: x\otimes w\otimes c \mapsto x\otimes w\otimes c_{i}\otimes c^{i},$$
where $\Delta (c)=c_{i}\otimes c^{i}$.

Let $j: M \to \Om (M;C;C): x \to x_{i}\otimes 1\otimes c^{i}$, where $\rho(x)=x_{i}\otimes c^{i}$. One can show easily that $j$ is a quasi-isomorphism and a map of $C$-comodules. It is an amusing exercise to show that
$$\xymatrix@1{M\cof ^(0.4){j}_(0.4){\simeq}& \;\Om (M;C;C) \fib &\; 0}$$
is a fibrant replacement of $M$ in $\cat {Comod}_{C}$, i.e., that $\Om (M;C;C)\to 0$ is an $\mathsf X\otimes C$-Postnikov tower (Definition \ref{defn:postnikov}).  It follows that 
$$M^{hco\, C}= \Om (M;C;C)^{co\, C}\cong \Om (M;C;\Bbbk),$$ 
and therefore, from the classical definition of $\operatorname{Cotor}$, that
$$H_{*}M^{hco\, C}= \operatorname{Cotor}^C(M,\Bbbk),$$
as expected.

\subsubsection{Chain algebras}\label{subsec:ch-alg}

Let $\Bbbk$ be a field, and let $\cat M$ again be the category of finite-type chain complexes of $\Bbbk$-vector spaces. Let $\cat{Alg}$ denote the category of monoids in $\cat M$. i.e., the category of finite-type chain $\Bbbk$-algebras.

There is a model category structure on $\cat{Alg}$ in which a morphism  is a fibration if it is surjective in positive degrees, while a weak equivalence is a quasi-isomorphism.  A cofibration in $\cat{Alg}$ is a retract of the inclusion of a chain algebra $(A,d)$ as a subobject of an algebra formed by free adjunction $(A\coprod TV, D)$, where $T$ denotes the tensor algebra functor.    The usual tensor product of chain complexes induces a monoidal structure on $\cat {Alg}$, which therefore becomes a monoidal model category.

The forgetful functor from $\cat{Alg}$ to $\cat M$ is a right adjoint and therefore creates limits in $\cat {Alg}$.  In particular, the chain complex underlying a finite limit of chain algebras is isomorphic to the limit of the underlying complexes.  Consequently, $-\otimes A$ commutes with finite limits in $\cat{Alg}$, for all chain algebras $A$.

Let $H$ be a $1$-connected comonoid in $\cat {Alg}$.    Since all finite limits commute with $-\otimes H$, the category $\cat{Alg}_{H}$ of $H$-comodule algebras is finitely complete.  Furthermore, $U_{H}'$ creates colimits in $\cat{Alg}_{H}$, since $-\otimes H$ commutes with colimits,  so $\cat{Alg}_{H}$ is cocomplete as well. 

By a proof very similar to that of Theorem \ref{thm:modcat-chcomod}, we can show that the category of $H$-comodule algebras admits a  left-induced model structure.

\begin{thm} Let $H$ be a $1$-connected comonoid in $\cat {Alg}$.  The forgetful functor $U_{H}':\cat {Alg}_{H}\to \cat M$ left-induces a model structure on $\cat{Alg}_{H}$ with Postnikov presentation $\big(\mathsf {Fib}_{\cat{Alg}}\otimes H, (\mathsf {Fib}_{\cat{Alg}}\cap \mathsf {WE}_{\cat{Alg}})\otimes H\big)$.
\end{thm}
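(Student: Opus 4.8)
The plan is to establish the model structure by the same method as Theorem~\ref{thm:modcat-chcomod}, verifying the Postnikov-presentation hypotheses of the kind appearing in Theorems~\ref{thm:post-comod} and~\ref{thm:post-comodalg} for the forgetful/cofree adjunction $U'_{H}:\cat{Alg}_{H}\adjunct{}{}\cat{Alg}:-\otimes H$, taking $\mathsf X=\mathsf{Fib}_{\cat{Alg}}$ and $\mathsf Z=\mathsf{Fib}_{\cat{Alg}}\cap\mathsf{WE}_{\cat{Alg}}$. The key structural point, which makes this parallel the comodule case, is that the weak equivalences and cofibrations of $\cat{Alg}_{H}$ are detected not in $\cat{Alg}$ but one step further down in $\cat M$: a morphism lies in $\mathsf W=(U'_{H})^{-1}(\mathsf{WE}_{\cat M})$ (resp.\ $\mathsf C=(U'_{H})^{-1}(\mathsf{Cof}_{\cat M})$) exactly when its underlying chain map is a quasi-isomorphism (resp.\ degreewise injective). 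This is precisely why the argument does not require the much stronger statement that the coaction $\rho:A\to A\otimes H$ be a cofibration of chain algebras---which it is not in general, since $A\otimes H$ is a tensor product and not a free product---but only that $\rho$ be degreewise injective, which holds because $\rho$ is a section of $A\otimes\ve$. Before checking the three conditions I would record the facts already available: that $\cat{Alg}_{H}$ is finitely bicomplete, that $-\otimes H$ commutes with finite limits and with colimits, that $U'_{H}$ preserves finite limits and---by the finite-type hypothesis---the products and sequential limits occurring in Postnikov towers, and that, over a field, $-\otimes H$ is exact and hence preserves quasi-isomorphisms.

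For the inclusion $\mathsf{Post}_{\mathsf Z\otimes H}\subset\mathsf W$ I would argue exactly as in Theorem~\ref{thm:modcat-chcomod}: each $z\in\mathsf Z$ is an acyclic fibration of chain algebras, hence a surjective quasi-isomorphism on underlying complexes, and over a field $z\otimes H$ is again a surjective quasi-isomorphism; since $U'_{H}$ preserves the pullbacks, products and sequential limits that assemble a $\mathsf Z\otimes H$-Postnikov tower, the underlying tower consists of surjective quasi-isomorphisms, so Theorem~\ref{thm:weibel} forces the limit map to be a quasi-isomorphism. Factorization~(a) I would obtain following the pattern of the proof of Lemma~\ref{lem:postfact}: factor $U_{H}A\to 0$ in $\cat{Alg}$ as a cofibration followed by an acyclic fibration $q$, transpose to get $j^{\sharp}=(j\otimes H)\rho:A\to Z\otimes H$ in $\cat{Alg}_{H}$, note that $j^{\sharp}$ is degreewise injective (hence in $\mathsf C$) because $\rho$ and $j$ are injective and $-\otimes H$ is exact, and then take $i=(j^{\sharp},f)$ together with the projection $p:(Z\otimes H)\times N\to N$, which is a pullback of $q\otimes H\in\mathsf Z\otimes H$ and so lies in $\mathsf{Post}_{\mathsf Z\otimes H}$; here $\mathsf{Post}_{\mathsf Z}=\mathsf Z$ since $\mathsf Z$ is the class of acyclic fibrations, so the finite-type condition gives the compatibility $\mathsf{Post}_{\mathsf Z}\otimes H\subset\mathsf{Post}_{\mathsf Z\otimes H}$ needed here.

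The substantive part, and the step I expect to be the main obstacle, is factorization~(b): for an arbitrary comodule-algebra map $f$, a factorization $f=qj$ with $j$ an acyclic cofibration and $q\in\mathsf{Post}_{\mathsf X\otimes H}$. Here I would imitate the inductive scheme behind the base-case lemma and Lemma~\ref{lem:inductive-step}, assembling a $\lambda$-tower of $\mathsf{Fib}_{\cat{Alg}}\otimes H$-fibrations along which the connectivity of the comparison map increases by one at each stage---starting from a $0$-equivalence and passing from an $n$-equivalence to an $(n+1)$-equivalence by a pullback along a map detecting $H_{n+1}$ of the relevant cofiber---and then passing to the limit. Two features make this harder than the comodule case and concentrate all the care. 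First, the objects one attaches (the analogues of the cofree comodules $H_{n+1}K\otimes C$ and of the based-path fibrations $P(H_{n+1}K,0)\otimes C$) must now themselves be \emph{comodule algebras}, and the detecting maps must be algebra maps, so the homology to be killed must be organized through cofree comodule algebras on suitably chosen chain algebras rather than on bare graded vector spaces; this is the multiplicative dual of the semi-free chain-algebra models of \cite{fht}, and it is where the $1$-connectedness of $H$ is used to keep the attached generators from perturbing low degrees. Second, whereas the comodule argument located the classes to be killed in the cokernel of $f$ computed in $\cat M$---legitimate there because the relevant forgetful functor is a left adjoint and preserves colimits---the functor $U'_{H}:\cat{Alg}_{H}\to\cat M$ does \emph{not} preserve cokernels, so the homological bookkeeping cannot be read off an algebra-level cokernel and must instead be carried out on the fibration side of each pullback, where $U'_{H}$ preserves the limits and $-\otimes H$ is computed degreewise. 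Granting the resulting base-case and inductive lemmas, the limit of the tower supplies the factorization required in~(b), and the existence machinery of Theorems~\ref{thm:post-comod} and~\ref{thm:post-comodalg} then delivers the asserted model structure on $\cat{Alg}_{H}$ together with its Postnikov presentation.
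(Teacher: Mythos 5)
Your overall strategy is the paper's: rerun the proof of Theorem \ref{thm:modcat-chcomod} for the adjunction between $\cat{Alg}_{H}$ and $\cat{Alg}$, with the Mittag--Leffler argument giving $\mathsf{Post}_{\mathsf Z\otimes H}\subset\mathsf W$ and a Lemma \ref{lem:postfact}-style argument giving factorization (a); those parts of your outline agree with what the paper does. The genuine gap is factorization (b). You correctly identify the obstacle (the attached objects must be comodule algebras and the detecting maps must be algebra maps), but then write ``granting the resulting base-case and inductive lemmas'' --- and resolving exactly that obstacle is the entire content of the paper's proof beyond ``same as before.'' The paper's device, which your ``cofree comodule algebras on suitably chosen chain algebras'' never names, is the trivial (square-zero) multiplication: one views $\Bbbk\oplus H_{n+1}K$ as a chain algebra with trivial multiplication, sets $P(H_{n+1}K,0)=(\Bbbk\oplus H_{n+1}K\oplus s^{-1}H_{n+1}K,D)$ with trivial multiplication as well, so that $D$ is automatically a derivation and the projection $P(H_{n+1}K,0)\to\Bbbk\oplus H_{n+1}K$ is automatically an algebra map, and then forms the pullback in $\cat{Alg}_{H}$ along $k^{\sharp}\colon N\to(\Bbbk\oplus H_{n+1}K)\otimes H$. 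In particular you never confront the one point where a new argument is unavoidable: the detecting map $k\colon N\to\Bbbk\oplus H_{n+1}K$ must be a map of algebras into a square-zero target (so products of positive-degree elements of $N$ must die in $H_{n+1}K$), and until that is arranged the inductive step does not exist. (Your side remarks --- that the homological bookkeeping must be done on chain-level cokernels, since the composite forgetful functor to $\cat M$ does not preserve them, and that $1$-connectedness of $H$ is what protects degrees $\leq n+1$ --- are correct and consistent with what the paper does implicitly.)

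Separately, your ``key structural point,'' that $\mathsf C$ should consist of the maps whose underlying \emph{chain} map is degreewise injective, is not a harmless reading: it breaks the lifting half of the machinery you invoke. The lifting hypotheses (3)(a)--(b) of Theorem \ref{thm:left-ind} are verified, in Corollary \ref{cor:postnikov} and Theorem \ref{thm:post-comodalg}, by transposing a square against $q\otimes H$ across the adjunction into a square against $q$ in $\cat{Alg}$; this requires the underlying \emph{algebra} map of a (acyclic) cofibration to be a (acyclic) cofibration of chain algebras, i.e.\ $\mathsf C=(U'_{H})^{-1}(\mathsf{Cof}_{\cat{Alg}})$. A degreewise injective quasi-isomorphism of chain algebras need not have this lifting property: the unit $\Bbbk\to\Bbbk\oplus D^{2}$ (square-zero multiplication on $\Bbbk\oplus D^{2}$) admits no multiplicative lift against the surjective quasi-isomorphism $T(D^{2})\to\Bbbk\oplus D^{2}$, because a lift $h$ would satisfy $h(y_{2})=y_{2}+c\,x_{1}\otimes x_{1}$ and $h(y_{2})^{2}=h(y_{2}\cdot y_{2})=0$, while the coefficient of $y_{2}\otimes y_{2}$ in $h(y_{2})^{2}$ is $1$. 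Tensoring this square with $H$ and transposing shows the lift still fails in $\cat{Alg}_{H}$ for any $1$-connected $H$, so with your classes hypothesis (3)(a) of Theorem \ref{thm:left-ind} --- and hence axiom (M4) --- fails outright. So one must either take $\mathsf C$ as in Theorem \ref{thm:post-comodalg} (in which case your factorization-(a) argument, which rests on $\rho$ being merely degreewise injective, no longer produces maps of $\mathsf C$) or prove the lifting hypotheses by some route not passing through the adjunction; your proposal does neither, and this tension cannot be waved away as a convenience of detection in $\cat M$.
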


The only significant difference between the proof of this theorem and that of Theorem \ref{thm:modcat-chcomod} resides in the description of the ``based path object'' construction (cf. Proof of Lemma \ref{lem:inductive-step}), which we must apply to the $H_{n+1}K$, where $K$ is the cokernel of an injective morphism $M\to N$ of $H$-comodule algebras.   Viewing $\Bbbk \oplus H_{n+1}K$ as a algebra with trivial multplication, let $P(H_{n+1}K,0)$ be the chain algebra with trivial multiplication $(\Bbbk \oplus H_{n+1}K\oplus s^{-1}H_{n+1}K, D)$, where $D$ is defined on generators as in the chain complex case. Since the multiplication in this algebra is trivial, the differential is a derivation, as required.  Furthermore, the projection map
$$P(H_{n+1}K,0)\to \Bbbk \oplus H_{n+1}K$$
is an algebra map, since the multiplication is trivial in both the source and the target.
The ``degree reason'' arguments in the proof of Lemma \ref{lem:inductive-step} still go through, in slightly modified form, for this algebraic ``based path fibration.''

As in the chain complex case, we can show that the cobar construction actually gives rise to a fibrant replacement functor on $\cat{Alg}_{H}$.
Corollary 3.6 of \cite{hess-levi} states that if $A$ is a connected right $H$-comodule algebra, then the two-sided cobar construction $\Om (A;H;\Bbbk)$ admits a chain algebra structure that extends the obvious right $\Om H$-module structure.  Furthermore,  the quotient map $q:\Om (A;H;\Bbbk)\to A$ is a map of algebras.
Dually, if $B$ is a connected left $H$-comodule algebra, then $\Om (\Bbbk;H;B)$ admits a chain algebra structure that extends the obvious left $\Om H$-module structure.  A simple computation shows that 
$$\Om (A;H;B)\cong \Om (A;H;\Bbbk)\underset {\Om H}\otimes \Om (\Bbbk;H;B),$$
and therefore $\Om (A;H;B)$ is also naturally a chain algebra, for any right $H$-comodule algebra $A$ and left $H$-comodule algebra $B$.

It follows from the characterization of the model structure above that if $H$ is a $1$-connected chain bialgebra and $A$ is any connected right $H$-comodule algebra, then 
$$\xymatrix{ A \cof^(0.35)\sim &\Om (A;H;H) \fib & 0}$$
is a fibrant replacement of $A$ in $\cat {Alg}_{H}$, since $\Om (A;H;H)\to 0$ is a $(\mathsf {Fib}_{\cat{Alg}}\otimes H)$-Postnikov tower. As in the chain complex case, we now have
$$A^{hco\, H}=\big(\Om (A;H;H)\big)^{co\, H}\cong \Om (A;H;R).$$

\begin{rmk}  It is likely that the methods of proof applied to showing that the forgetful functor left-induces model structure on the category of comodules when the underlying category $\cat M$ is the category of chain complexes or of chain algebras can be generalized to any category $\cat M$ in which Postnikov decompositions of objects can be built inductively and in which there is a natural, decreasing sequence of successive approximations to the set of weak equivalences in $\cat M$.  Thus, for example, based on work of Mandell and Shipley \cite{mandell-shipley} and of Dugger and Shipley \cite{dugger-shipley}, it is reasonable to expect that we can define homotopy coinvariants when the underlying category is the category of symmetric spectra or of symmetric ring spectra.
\end{rmk}

\section{Homotopic Hopf-Galois extensions}

Having established a rigorous theory of homotopy coinvariants, we are ready to generalize the notion of Hopf-Galois extensions to monoidal model categories.  Once we have stated the definition of homotopic Hopf-Galois extensions, we present examples of such extensions, including trivial extensions and extensions in two topologically interesting model categories.

\begin{convention}  Throughout this section let $(\cat M, \otimes, I)$  be a monoidal model category, and let $H$ be a bimonoid in $\cat M$.  We suppose furthermore that the category $\cat{Alg}_{H}$ of $H$-comodule algebras admits a model structure with respect to which the coinvariants functor $\operatorname{Coinv}: \cat{Alg}_{H}\to  \cat{Alg}$ is a right Quillen functor, where $\cat {Alg}$ is endowed with a model structure right-induced by the forgetful functor $U_{Alg}:\cat{Alg}\to \cat M$.  Finally, any category of modules over a monoid in $M$ is considered to be endowed with the model structure right-induced by the forgetful functor.
\end{convention}

Let $B$ be any monoid in $\cat M$.  Recall that the tensor product of a right $B$-module $M$ with right action map $r$ and a left $B$-module with left action map $\ell$ is the coequalizer
$$M\otimes B\otimes N \egal {r\otimes N}{M\otimes \ell} M\otimes N \to M\underset B\otimes  N,$$
which is computed in $\cat M$.

\begin{defn}\label{defn:hg-homotopic} Let $A$ be an $H$-comodule algebra, with right $H$-coaction $\rho: A\to A\otimes H$ and multiplication map $\mu_{}:A\otimes A\to A$.  Let $B$ be a monoid in $\cat M$.

Let  $\varphi: \operatorname{Triv} (B)\to A$  be a morphism in $\cat {Alg}_{H}$. The \emph{Galois map} associated to $\varphi$ is a morphism $\beta_{\vp}: A\underset B \otimes A\to A\otimes H$ in $\cat {M}$  that is equal to the composite
$$A\underset B \otimes A \xrightarrow {A\underset B\otimes \rho}  A\underset B \otimes A\otimes H \xrightarrow{\bar\mu_{}\otimes H} A\otimes H,$$
where $\bar \mu_{}:A\underset B\otimes A \to A$ is the unique morphism from the coequalizer induced by the multiplication map of $A$. 

The map $\varphi: \operatorname{Triv} (B)\to A$ of $H$-comodule algebras is  a \emph{homotopic $H$-Hopf-Galois extension} if
\begin{enumerate}
\item the Galois map $\beta_{\vp}$ is a weak equivalence in $\cat M$, and
\item there is a choice of fibrant replacement $\xymatrix@1{j:A\wecof &A'}$ in $\cat {Alg}_{H}$ such that
$$-\underset B\otimes A^{hco\, H}: \cat {Mod}_{B}\adjunct {}{} \cat {Mod}_{A^{hco\, H}}: i_{\vp}^*$$
is a pair of Quillen equivalences, where $i_{\vp}:B\to A^{hco\, H}$ is the morphism of monoids given by the composite
$$B\cong \big(\operatorname{Triv}(B)\big)^{co\, H}\xrightarrow {\vp^{co\, H}} A^{co\, H}\xrightarrow{j^{co\, H}} (A')^{co\, H}=:A^{hco\, H}.$$
\end{enumerate}
\end{defn}

\begin{rmk} \label{rmk:beta}The Galois map is in general not a morphism of monoids, unless $\mu_{}$ is a morphism of monoids, i.e., unless $A$ is a commutative monoid.  
In section \ref{subsec:co-ring} we provide a categorical perspective on $\beta_{\vp}$, in terms of co-rings over $A$.
\end{rmk} 

\begin{rmk} Condition (2) in the definition above  replaces the object-level condition (2) of the classical definition of Hopf-Galois extensions (Definition \ref{defn:hg-classical}) with a Morita-type category-level condition in the homotopic case.  Note, however, that condition (2) implies that  if $M$ is a cofibrant $B$-module such that $M\underset B\otimes A^{hco\, H}$ is fibrant,  
then $i_{\vp}$ induces a weak equivalence
$$i_{M}:=M\underset B\otimes i_{\vp}: M\xrightarrow \sim i_{\vp }^*(M\underset B\otimes A^{hco\, H}),$$ 
which is the unit of the adjunction.  In particular, if $B$ is cofibrant as a $B$-module (e.g., if the unit $I$ of the monoidal structure is cofibrant in $\cat M$) and $A^{hco\, H}$ is fibrant in $\cat M$, then $i_{\vp }$ itself is a weak equivalence, and we recover an object-level generalization of condition (2) in Definition \ref{defn:hg-classical}.  In Remark \ref{rmk:tautology} we discuss the possbility of ``categorifying'' condition (1) as well.
\end{rmk}

In our study of homotopic Hopf-Galois extensions, we occasionally have need of the following notion.

\begin{defn}  An object $X$ in $\cat M$ is \emph{homotopically flat} if the functor $X\otimes -:\cat M \to \cat M$ preserves weak equivalences.
\end{defn}

For example, all topological spaces and all simplicial sets are homotopically flat in their respective cartesian model categories.  Moreover, the K\"unneth theorem implies that any chain complex over a field is homotopically flat.

\subsection{Trivial extensions}

 Let $R$ be a commutative ring. As explained, e.g., by Schauenburg in Example 2.1.2 of \cite {schauenburg}, an $R$-bialgebra $H$ (in the classical sense of the word) is a Hopf algebra (i.e., admits an antipode) if and only if $H$ is an $H$-Hopf-Galois extension of $R$, which is true if and only if the Galois map
$$H\otimes H \xrightarrow {H\otimes \Delta} H\otimes H\otimes H \xrightarrow {\mu\otimes H} H\otimes H$$
is  an isomorphism. More generally, a trivial extension $B\otimes \eta: B\to B\otimes H$ is $H$-Hopf-Galois if and only if $H$ is a Hopf algebra.

Motivated by this observation, we formulate the following definition.

\begin{defn} A bimonoid $H$ in a monoidal model category $(\cat M, \otimes, I)$ is a \emph{Hopf monoid} if the Galois map $\beta_{\eta}:H\otimes H\to H\otimes H$ associated to the $H$-comodule algebra map $\eta:\operatorname{Triv}(I)\to H$ is a weak equivalence in $\cat M$.
\end{defn}

\begin{exs}  If $\cat M=\cat {Top}$ and $H$ is the monoid of Moore loops on a based space $X$, then $H$ is Hopf monoid in $\cat M$.  Similarly, the chain bialgebra $C_{*}H$ of singular chains on $H$ is a Hopf monoid in $\cat {Ch}_{R}$.
\end{exs}

 We now show  that if $H$ is a Hopf monoid and $B$ is a monoid satisfying certain technical conditions, then the trivial extension $B\otimes \eta: B\to B\otimes H$ is a homotopic $H$-Hopf-Galois extension.

\begin{rmk} Observe that the following diagram commutes.
$$\xymatrix{ (B\otimes H)\underset B\otimes (B\otimes H)\ar [dd]_{\cong}\ar [rr]^{(B\otimes H)\underset B\otimes (B\otimes \Delta)}\ar[rrdd]^{\beta_{B\otimes \eta}}&&(B\otimes H)\underset B\otimes (B\otimes H)\otimes H\ar [dd]^{\mu_{B\otimes H}\otimes H}\\
\\
B\otimes H\otimes H\ar [r]^(0.45){B\otimes H\otimes \Delta}\ar @/_1.5pc/ [rr]_{B\otimes \beta_{\eta}}&B\otimes H\otimes H\otimes H\ar [r]^{B\otimes \mu\otimes H}&B\otimes H\otimes H}$$
It follows that $\beta_{B\otimes \eta}$ is a weak equivalence if $\beta_{\eta}$ is a weak equivalence and $B$ is homotopically flat.
\end{rmk}

We again separate our analysis into two parts: the cartesian case and the case of model structure on $\cat {Alg}_{H}$ left-induced by the forgetful functor $U_{H}':\cat {Alg}_{H}\to \cat {Alg}$.

\begin{prop} Let $(\cat M, \times, e)$ be a monoidal model category such that the forgetful functor right-induces a model structure on $\cat{Alg}$.  Let $H$ be  a Hopf monoid  $\cat M$, where the comultiplication is the diagonal map $\Delta:H\to H\times H$.  If $B$ is a monoid that is fibrant and homotopically flat in $\cat M$, then the trivial extension $B\times \eta: B\to B\times H$ is a homotopic $H$-Hopf-Galois extension.
\end{prop}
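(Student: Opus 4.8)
The plan is to verify the two conditions of Definition \ref{defn:hg-homotopic} for the trivial extension $\vp=B\times\eta:\operatorname{Triv}(B)\to A$, where $A=B\times H$ is equipped with the cofree $H$-coaction $B\times\Delta:B\times H\to B\times H\times H$ and the product monoid structure. Condition (1) requires no new work: since $H$ is a Hopf monoid, $\beta_\eta$ is a weak equivalence, and since $B$ is homotopically flat, the commuting diagram in the remark preceding this proposition shows that the associated Galois map $\beta_{B\times\eta}$ is a weak equivalence as well. The substance of the proof therefore lies in condition (2), and the guiding observation is that the hypothesis that $B$ is fibrant collapses the homotopy coinvariants to the strict coinvariants.

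The first step is to show that $A=B\times H$ is already fibrant in $\cat{Alg}_H$. Under the isomorphism $\cat{Alg}_H\cong\cat{Alg}/H$ recalled in Section \ref{subsec:cartesian} (and used in Corollary \ref{cor:comodalg-cartesian}), the comodule algebra $A$ corresponds to the projection $pr_2:B\times H\to H$, while the terminal object is $\operatorname{id}:H\to H$. Because the model structure on $\cat{Alg}/H$ declares an object fibrant exactly when its structure map is a fibration in $\cat{Alg}$, and that structure is right-induced from $\cat M$, it suffices to check that $pr_2:B\times H\to H$ is a fibration in $\cat M$. But $pr_2$ is precisely the pullback of $B\to e$ along $H\to e$; since $B$ is fibrant, $B\to e$ is a fibration, and fibrations are stable under pullback, so $pr_2$ is a fibration and $A$ is fibrant.

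I may therefore take $A'=A$ with $j=\operatorname{id}_A$ as a (valid, since $A$ is fibrant) fibrant replacement, whence $A^{hco\,H}=A^{co\,H}$. As $A=B\times H$ is the cofree $H$-comodule generated by $B$, the computation that the coinvariants of a cofree comodule $X\times H$ recover $X$ gives $A^{co\,H}\cong B$, and an explicit inspection of the equalizer defining the coinvariants identifies the inclusion $A^{co\,H}\hookrightarrow A$ with $\vp=B\times\eta$ itself. Tracing the definition of $i_\vp$, namely the composite $B\cong(\operatorname{Triv}(B))^{co\,H}\xrightarrow{\vp^{co\,H}}A^{co\,H}=A^{hco\,H}$ with $j=\operatorname{id}$, I then identify $i_\vp$ with an isomorphism of monoids $B\xrightarrow{\cong}A^{hco\,H}$.

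Finally, condition (2) is formal once $i_\vp$ is an isomorphism: restriction and extension of scalars along an isomorphism of monoids is an isomorphism of module categories carrying the right-induced model structures into one another, so the adjoint pair $(-\underset B\otimes A^{hco\,H},\, i_\vp^*)$ is trivially a Quillen equivalence. The only genuinely nonformal point, and hence the main obstacle, is the fibrancy step: recognizing that fibrancy of $B$ is exactly what forces $A=B\times H$ to be fibrant, thereby bypassing any cobar-type resolution and reducing the homotopy coinvariants to $B$. Everything else is bookkeeping with the cofree coaction.
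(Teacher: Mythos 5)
Your proposal is correct and follows essentially the same route as the paper: condition (1) is dispatched by the remark on $\beta_{B\times\eta}$, and condition (2) by observing that fibrancy of $B$ forces $B\times H$ (viewed via the projection to $H$ in the slice description) to be fibrant in $\cat{Alg}_{H}$, so that the homotopy coinvariants collapse to the strict coinvariants $\cong B$ and $i_{\vp}$ is an isomorphism. Your explicit appeal to pullback-stability of fibrations just fills in the paper's unproved assertion that $X\times H\to H$ is a fibration when $X$ is fibrant.
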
 

\begin{proof}Since $H$ is a Hopf monoid and $B$ is homotopically flat, the Galois map $\beta_{B\times \eta}$ is a weak equivalence.

For any object $X$ of $\cat M$, the cofree comodule structure on $X\times H$ arises from the projection map $X\times H \to H$, which is a fibration in $\cat M$ if and only if $X$ is fibrant in $\cat M$.  In other words, $X\times H$ is a fibrant $H$-comodule if and only if $X$ is a fibrant object in $\cat M$.  Consequently, if $X$ is fibrant, then
$$(X\times H)^{hco\, H}=(X\times H)^{co\, H}\cong X.$$

Note that $B$ is fibrant in $\cat {Alg}$, since the model structure on $\cat{Alg}$ is right-induced by the model structure on $\cat M$, and $B$ is supposed to be fibrant in $\cat M$.  It follows that $B\times H$ is a fibrant $H$-comodule algebra and therefore that  $$i_{B\times \eta}:B\to (B\times H)^{hco\, H}\cong B$$ is an isomorphism.  Consequently, for any $B$-module $M$, the induced map $i_{M}:M\to M\underset B\times (B\times H)^{hco\, H}$ is also an isomorphism.
\end{proof}

In the case of left-induced model structure, we have the following result.

\begin{prop} Let $(\cat M, \otimes, I)$ be a monoidal category endowed  with a  model category structure.  Let  $H$ be a bimonoid in  $\cat M$.   Suppose that $\cat {Alg}$ admits a model structure right-induced by the forgetful functor $U_{Alg}:\cat {Alg}\to \cat M$ and  that $\cat {Alg}_{H}$ admits a model structure left-induced by the forgetful functor $U'_{H}:\cat {Alg}_{H}\to \cat{Alg}$.  If $B$ is a monoid that is fibrant in $ \cat{Alg}$ and homotopically flat in $\cat M$, then the trivial extension $B\to B\otimes H$ is a homotopic $H$-Hopf-Galois extension.
\end{prop}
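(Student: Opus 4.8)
The plan is to verify the two conditions of Definition~\ref{defn:hg-homotopic} for the map $\vp = B\otimes\eta\colon \operatorname{Triv}(B)\to B\otimes H$, following the cartesian proposition but replacing the slice-category reasoning with an argument tailored to the left-induced structure on $\cat{Alg}_H$. For condition~(1) I would appeal to the remark preceding the cartesian case: its commuting diagram expresses $\beta_{B\otimes\eta}$ as $B\otimes\beta_\eta$ under the canonical isomorphism $(B\otimes H)\underset B\otimes(B\otimes H)\cong B\otimes H\otimes H$, so $\beta_{B\otimes\eta}$ is a weak equivalence as soon as $\beta_\eta$ is one and $B$ is homotopically flat. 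The Hopf monoid hypothesis on $H$ supplies exactly that $\beta_\eta$ is a weak equivalence, and $B$ is homotopically flat by assumption, so condition~(1) follows.

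The heart of the argument is condition~(2), where the key point is that the cofree $H$-comodule algebra $B\otimes H$ is \emph{already} fibrant in $\cat{Alg}_H$, so that the identity map of $A = B\otimes H$ serves as the required fibrant replacement $j\colon A\to A'$. To see this, note that since the model structure on $\cat{Alg}_H$ is left-induced by $U'_H$, the functor $U'_H$ preserves cofibrations and weak equivalences, hence acyclic cofibrations, and is therefore left Quillen; consequently its right adjoint $-\otimes H\colon \cat{Alg}\to\cat{Alg}_H$ is right Quillen and so preserves fibrant objects. As $B$ is fibrant in $\cat{Alg}$ by hypothesis, $B\otimes H$ is fibrant in $\cat{Alg}_H$.

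With $j$ the identity we then have $A^{hco\,H} = \operatorname{Coinv}(B\otimes H) = (B\otimes H)^{co\,H}$, which the cofree-coinvariants computation (the example following Remark~\ref{rmk:coinvariant}) identifies with $B$ as a monoid. Tracing through the definition of $i_\vp$, it becomes $\vp^{co\,H}$ under this identification, namely the map $B\to(B\otimes H)^{co\,H}$ sending $b\mapsto b\otimes 1$; this is precisely the cofree-coinvariants isomorphism, so $i_\vp$ is an isomorphism of monoids. Restriction and extension of scalars along an isomorphism are mutually inverse isomorphisms of $\cat{Mod}_B$ and $\cat{Mod}_{A^{hco\,H}}$ compatible with their right-induced model structures, so the pair $(-\underset B\otimes A^{hco\,H},\, i_\vp^{*})$ is trivially a Quillen equivalence, giving condition~(2).

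I expect the main obstacle to be the fibrancy of $B\otimes H$: in the cartesian proposition this came for free from the explicit description of fibrations in the slice category, whereas here it must be extracted from the identification of $-\otimes H$ as a right Quillen functor, which is the structural feature specific to the left-induced setting and the essential input distinguishing this proposition from its cartesian counterpart. A secondary point requiring care is verifying that the isomorphism $(B\otimes H)^{co\,H}\cong B$, and hence $i_\vp$, are isomorphisms in $\cat{Alg}$ rather than merely in $\cat M$, so that the comparison of module categories is genuinely induced by an isomorphism of monoids.
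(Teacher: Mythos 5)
Your proposal is correct and follows essentially the same route as the paper: condition (1) via the remark identifying $\beta_{B\otimes\eta}$ with $B\otimes\beta_\eta$ plus homotopic flatness, and condition (2) by observing that left-induction makes $U'_H$ left Quillen, hence $-\otimes H$ right Quillen, so the fibrant algebra $B$ yields a fibrant comodule algebra $B\otimes H$, whence $(B\otimes H)^{hco\,H}=(B\otimes H)^{co\,H}\cong B$ and $i_\vp$ is an isomorphism. The only cosmetic difference is that the paper phrases fibrancy via the fibration $B\otimes H\to e\otimes H$ over the terminal object of $\cat{Alg}_H$ rather than via preservation of fibrant objects, and both proofs (like the paper's own) tacitly use the Hopf monoid hypothesis on $H$ from the section's framing, which is not literally repeated in the proposition statement.
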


\begin{proof}  As in the previous proof, we can conclude immediately that the Galois map $\beta_{B\otimes \eta}$ is a weak equivalence.

Let $e$ denote the terminal object in $\cat M$, which is also the terminal object in $\cat {Alg}$, since $U_{Alg}$ is a right adjoint.  Note that $e\otimes H$ is the terminal object in $\cat {Alg}_{H}$, since $-\otimes H$ is also a right adjoint.  

Since $U'_{H}$ left-induces the model structure on $\cat{Alg}_{H}$, the cofree functor $-\otimes H$ preserves fibrations. Thus, since $B\to e$ is a fibration of algebras, $B\otimes H\to e\otimes H$ is a fibration of $H$-comodule algebras, i.e., $B\otimes H$ is fibrant in $\cat {Alg}_{H}$. It follows that 
$$(B\otimes H)^{hco\, H}=(B\otimes H)^{co \, H}\cong B$$
and therefore that 
$$M\underset B\otimes i_{\vp}:M\to M\underset B \otimes (B\otimes H)^{hco\, H}$$
is an isomorphism for all $B$-modules $M$.
\end{proof}

\subsection{Examples}\label{subsec:examples}

We present in this section characterizations and explicit examples of homotopic Hopf-Galois extensions in two model categories of topological interest.

\subsubsection {Simplicial monoids}  Let  $H$ be a simplicial monoid, seen as a simplicial bimonoid, with comultiplication equal to the diagonal map.  Let $A$ be a fibrant $H$-comodule algebra, i.e., a simplicial monoid endowed with a simplicial homomorphism $p:A\to H$ that is a Kan fibration.  Let $B$ be a simplicial monoid, and let $\vp\in \cat{Alg}_{H}\big(\operatorname{Triv}(B), A\big)$.

The computations in section \ref{subsec:simplmon} imply that  if $\vp $ is homotopically $H$-Hopf-Galois, then $B$ is weakly equivalent to the fiber of $p$, i.e., $\vp$ is homotopy equivalent to a principal fibration. Furthermore, the Galois map $\beta_{\vp}$
$$A\underset B\times A \xrightarrow{A\underset B\times (A\times p)\Delta} A\underset B\times A \times H \xrightarrow {\mu_{A}\times H} A\times H$$
is a weak equivalence.

For example, suppose that $H$ is a simplicial group, $B$ is a simplicial monoid that is a Kan complex,  and $A$ is a twisted cartesian product \cite{may} of $H$ and $B$ via a twisting function $\tau :H_{\bullet}\to G_{\bullet-1}$, where $G$ is a simplicial group acting on $B$ via a map of simplicial monoids $\alpha: B\times G \to G$. We require furthermore that $\tau$ be a homomorphism in each level, so that the componentwise multiplication in $A$ is a simplicial map. 

The projection map 
$$A=B\underset \tau \times H \to H$$ is then a simplicial homomorphism and Kan fibration, i.e., $A$ is  fibrant in $\cat {Alg}_{H}$.  Moreover, the inclusion $\vp: B\hookrightarrow A$ is a homotopic $H$-Hopf-Galois extension, since the Galois map
$$A\underset B\times A \cong B\underset {\tau\times \tau}\times(H\times H)\to A\times H=(B\underset \tau\times H)\times H: (b,x,y)\mapsto (b, xy,y)$$
admits an inverse
$$(B\underset \tau\times H)\times H\to  B\underset {\tau\times \tau}\times (H\times H)\to A\times H=: (b,x,y)\mapsto (b, xy^{-1},y),$$
i.e., $\beta_{\vp}$ is actually an isomorphism.  Moreover, since $B$ itself is one model for $A^{hco\, H}$, we can take $i_{\vp}$  to be the identity morphism of $B$, thereby fulfilling condition (2) of Definition \ref{defn:hg-homotopic} trivially.

\subsubsection {Chain algebras}
Let $\Bbbk$ be a field. Let $H$ be a $1$-connected bimonoid in the category $\cat {Ch}_{\Bbbk}$ of finite-type chain complexes of $\Bbbk$-vector spaces. It is well known that any connected bimonoid in $\cat {Ch}_{\Bbbk}$ is a Hopf monoid.  Indeed, the map $\beta _{\eta}$, i.e., the composite
$$H\otimes H \xrightarrow {H\otimes \Delta} H\otimes H\otimes H \xrightarrow {\mu \otimes H} H\otimes H,$$
is  actually an isomorphism. 

Let $A$ be a connected  $H$-comodule algebra, and let $B$ be a connected chain algebra.  Let $\vp\in \cat{Alg}_{H}\big(\operatorname{Triv}(B), A\big)$.

Recall the computations done in section \ref{subsec:ch-alg}. The map $\vp$ is a homotopic $H$-Hopf-Galois extension  only if
$$i_{\vp}: B\to A^{hco \, H}= \Om (A;H;\Bbbk)$$
and
$$\beta_{\vp}:A\underset B\otimes A\to A\otimes H$$
are weak equivalences of chain algebras.

As a specific example of  a homotopic $H$-Hopf-Galois extension, we can therefore take the inclusion
$$\iota:\Om (A;H;\Bbbk) \hookrightarrow \Om (A;H;H).$$
Since 
$$\Om (A;H;H)\underset {\Om (A;H;\Bbbk)} \otimes \Om (A;H;H) \cong \Om (A;H;H)\otimes H$$
 as $H$-comodule algebras, the Galois map $\beta_{\iota}$ can be identfied with $\Om(A;H;H)\otimes \beta_{\eta}$.  It follows that both $\beta_{\iota}$ and the induced map $i_{\iota}$ are actually isomorphisms in this case.

\section{Homotopically faithful flatness and descent}

In this section we initiate a program to prove a homotopic version of an important structure theorem for Hopf-Galois extensions, due to Schneider \cite {schneider}, which relates faithful flatness and descent.  We begin by a general discussion of categories of modules endowed with coactions of co-rings, of which the category of descent data is one example.

Throughout this section, we impose the following convention.

\begin{convention}\label{conv:monmod}  Henceforth, $(\cat M, \otimes, I)$ denotes a cofibrantly generated monoidal category satisfying the monoid axiom and such that all objects are small relative to $\cat M$.  All monoidal model categories are supposed to be symmetric and closed.
\end{convention}

It follows from Theorem 4.1 in \cite{schwede-shipley} that for any monoid $A$ in $\cat M$, the category $\cat {Mod}_{A}$ admits a cofibrantly generated model structure that is right-induced by the forgetful functor $U_{A}:\cat {Mod}_{A}\to \cat M$.  In what follows we always assume that this is the model structure on $\cat {Mod}_{A}$. 

\subsection{Homotopy theory of comodules over co-rings}\label{subsec:co-ring}

Let $(A,\mu, \eta)$ be a monoid in $\cat M$, and let $_{A}\cat {Mod}_{A}$ denote the category of $A$-bimodules.  It is easy to check that $_{A}\cat {Mod}_{A}$ is a monoidal category, with monoidal product $-\underset A\otimes -$ and unit $A$, as monoidal model categories are closed monoidal and therefore the tensor product commutes with colimits on both sides.

\begin{defn}  An \emph{$A$-co-ring} is a comonoid in $(_{A}\cat {Mod}_{A},-\underset A\otimes -)$.  In other words, an $A$-co-ring is an $A$-bimodule $W$ that is endowed with a coassociative, counital comultiplication $\psi:W\to W\underset A\otimes W$ that is a morphism of $A$-bimodules.
\end{defn}

\begin{exs}
(0) The monoid $A$ is always trivially an $A$-co-ring, where the comultiplication is the isomorphism $A\to A\underset A\otimes A$ and the counit is the identity.
\smallskip 

(1) Let $(C,\Delta, \ve)$ be any comonoid in $\cat M$.  The tensor product $A\otimes C$ is then naturally an $A$-co-ring, called the \emph{trivial co-ring on $C$}. Its left $A$-module action is given by
$$A\otimes A\otimes C \xrightarrow{\mu\otimes C} A\otimes C,$$
and its right $A$-action by
$$A\otimes C\otimes A\xrightarrow\cong A\otimes A\otimes C \xrightarrow{\mu\otimes C} A\otimes C,$$
where  we have used the symmetry isomorphism $C\otimes A \cong A\otimes C$ in the second composite. Its comultiplication $\psi_{triv}$   is equal to 
$$A\otimes C\xrightarrow {A\otimes \Delta} A\otimes C\otimes C\cong (A\otimes C)\underset A\otimes (A\otimes C),$$
while the counit $\epsilon_{triv}$ is
$$A\otimes C\xrightarrow {A\otimes \ve} A\otimes I\cong A.$$
It is easy to check that both are morphisms of $A$-bimodules.
\smallskip

(2)  This example resembles example (1) superficially, but does in fact differ significantly. Let $(H,\mu_{H}, \eta, \Delta, \ve)$ be any bimonoid in $\cat M$, and let $A$ be an $H$-comodule algebra with multiplication map $\mu _{A}$ and right $H$-coaction $\rho: A \to A\otimes H$. The tensor product $A\otimes H$ is then naturally an $A$-co-ring. Its left $A$-module action is given by
$$A\otimes A\otimes H \xrightarrow{\mu_{A}\otimes H} A\otimes H,$$
and its right $A$-action by
$$A\otimes H\otimes A\xrightarrow{A\otimes H\otimes \rho} A\otimes H\otimes A\otimes H\xrightarrow {\cong} A\otimes A\otimes H\otimes H \xrightarrow {\mu_{A}\otimes \mu _{H}} A\otimes H,$$
where  we have used the symmetry isomorphism $H\otimes A \cong A\otimes H$ in the second composite. Its comultiplication $\psi_{\rho}$   is equal to 
$$A\otimes H\xrightarrow {A\otimes \Delta} A\otimes H\otimes H\cong (A\otimes H)\underset A\otimes (A\otimes H),$$
while the counit $\epsilon_{\rho}$ is
$$A\otimes H\xrightarrow {A\otimes \ve} A\otimes I\cong A.$$
It is easy to check that both are morphisms of $A$-bimodules.  Henceforth, we denote this co-ring $W^\rho$ and call it the \emph{co-ring associated to $\rho$}.
\smallskip

(3) Let $\vp:B\to A$ be any morphism of monoids in $\cat M$.  The \emph{canonical co-ring on $\vp$} has as underlying $A$-bimodule $A\underset B\otimes A$, endowed with a comultiplication $\psi_{can}$, which is equal to the composite
$$A\underset B\otimes A\cong A\underset B\otimes B\underset B\otimes A\xrightarrow{A\underset B\otimes\vp\underset B\otimes A} A\underset B\otimes A\underset B\otimes A\cong (A\underset B\otimes A) \underset A\otimes (A \underset B\otimes A).$$
As is clear from the universal property of coequalizers, the morphism $\bar \mu:A\underset B\otimes A\to A$ induced by the multiplication map of $A$ is the counit of $\psi_{can}$. 
\end{exs}

To describe the relationship between Hopf-Galois extensions and faithful flatness, we need to work with categories of the following sort.

\begin{defn}  Let $(W,\psi, \epsilon)$ be an $A$-co-ring.  The category $\cat {M}^W_{A}$ is the category of \emph{$W$-comodules in the category of right $A$-modules}.  In other words, an object of $\cat{M}^W_{A}$ is a right $A$-module $M$ together with a morphism $\theta:M\to M\underset A\otimes W$ of right $A$-modules such that  the diagrams
 $$\xymatrix{M\ar[rr]^\theta \ar [d]^\theta&&M\underset A\otimes W\ar [d]^{\theta\underset A\otimes W}&& M\ar [r]^(0.4)\theta\ar[dr]^=&M\underset A\otimes W\ar[d]^{M\underset A\otimes \epsilon}\\
M\underset A\otimes W\ar [rr]^{M\underset A\otimes \psi}&&M\underset A\otimes W\underset A\otimes W&&&M}$$
commute. Morphisms in $\cat M^W_{A}$ are morphisms of $A$-modules that respect the $W$-coactions.
\end{defn}

\begin{rmk}  The co-ring $W$ is itself an object in $\cat M^W_{A}$,  since $\psi:W\to W\underset A\otimes W$ can be viewed as a morphism of right $A$-modules.
\end{rmk}

For the specific co-rings described in the examples above, we obtain particularly interesting categories of comodules.

\begin{exs} (1) Let $C$ be a comonoid, and let $(W,\psi, \epsilon)=(A\otimes C, \psi_{triv},\epsilon_{triv})$.  Then $\cat M^W_{A}$ is isomorphic to the category of $A$-modules endowed with a $C$-coaction that is a morphism of $A$-modules, since 
$$M\underset A\otimes W=M\underset A\otimes A\otimes C\cong M\otimes C$$ 
for any right $A$-module $M$.  Under this isomorphism $M\otimes C$ is endowed with the right $A$-action given by the composite 
$$M\otimes C\otimes A\cong M\otimes A\otimes C \xrightarrow {r\otimes C} M\otimes C,$$
where $r$ is the right $A$-action on $M$.
\smallskip

(2) Let $H$ be a bimonoid and $A$ an $H$-comodule algebra with right $H$-coaction $\rho$. Let $(W,\psi, \epsilon)=(A\otimes H, \psi_{\rho},\epsilon_{\rho})$.  Then $\cat M^W_{A}$ is isomorphic to the category of $A$-modules endowed with an $H$-coaction that is a morphism of $A$-modules, since 
$$M\underset A\otimes W=M\underset A\otimes A\otimes H\cong M\otimes H$$ 
for any right $A$-module $M$.  Under this isomorphism $M\otimes H$ is endowed with the right $A$-action given by the composite
$$M\otimes H\otimes A\xrightarrow {M\otimes H\otimes \rho} M\otimes H\otimes A\otimes H \cong M\otimes A\otimes H\otimes H\xrightarrow {r\otimes \mu_{H}} M\otimes H,$$
where $r$ is the right $A$-action on $M$.  Note that $(A,\rho)$ itself is an object in $\cat M^W_{A}$.
\smallskip

(3) Let $\vp: B\to A$ be any morphism of monoids in $\cat M$, and let $$W_{\vp}=(A\underset B\otimes A, \psi_{can},\epsilon_{can}),$$ the canonical co-ring associated to $\vp$.  The category $\cat M^{W_{\vp}}_{A}$ is isomorphic to $\cat D(\vp)$, the \emph{descent category associated to $\vp$}. An object of $\cat D(\vp)$  is a right $A$-module $M$ endowed with a morphism $\theta: M\to M\underset B\otimes A$ such that the diagrams
$$\xymatrix{M\ar[rr]^\theta \ar [d]^\theta&&M\underset B\otimes A\ar [d]^{\theta\underset B\otimes A}&&M\ar [r]^(0.4)\theta\ar[dr]^=&M\underset B\otimes A\ar[d]^{\bar r}\\
M\underset B\otimes A\ar [rr]^{M\underset B\otimes \vp \underset B\otimes A}&&M\underset B\otimes A\underset B\otimes A&&&M}$$
commute, where $\bar r$ is induced by the right $A$-action on $M$.  We refer to $(M,\theta)$ as a \emph{descent datum}.
The morphisms in $\cat D(\vp)$ are $A$-module morphisms respecting the structure maps.

The key to showing that $\cat M^{W_{\vp}}_{A}$ and $\cat D(\vp)$ are isomorphic is the observation that 
$$M\underset A\otimes W=M\underset A\otimes A\underset B\otimes A\cong M\underset B\otimes A$$
for all right $A$-modules $M$.
\end{exs}

\begin{rmk} Any morphism $\gamma:W\to W'$ of $A$-co-rings induces a functor
$$\gamma_{*}:\cat M^W_{A}\to \cat M^{W'}_{A},$$
which is defined on objects by $\gamma_{*}(M, \theta)=\big(M,(M\underset A\otimes \gamma)\circ\theta\big)$.   If equalizers exist in $\cat M^W_{A}$, then $\gamma_{*}$ admits a right adjoint
$$-\underset {W'}\square \gamma_{*}(W): \cat M^{W'}_{A}\to \cat M^W_{A},$$
where for any object $(M',\theta')$ in $\cat M^{W'}_{A}$, the diagram
$$(M',\theta')\underset {W'} \square\gamma_{*}(W)\to \operatorname{equal}(M'\underset A\otimes W\egal{\theta'\underset A\otimes W}{M\underset A'\otimes (\gamma\underset A\otimes W)\psi} M'\underset A\otimes W'\underset A\otimes W)$$
is an equalizer, computed in $\cat M^W_{A}$.  

To prove that $-\underset {W'}\square \gamma_{*}(W)$ truly is the right adjoint to $\gamma_{*}$, note that a morphism in $\cat M^W_{A}$ from $(M,\theta)$ to $(M', \theta')\underset {W'}\square \gamma_{*}(W)$ is equivalent to a morphism
$$f: (M,\theta)\to (M'\underset A\otimes W, M'\underset A\otimes \psi)$$
in $\cat M^W_{A}$ such that
$$(\theta'\underset A\otimes W)f= \big(M'\underset A\otimes (\gamma\underset A\otimes W)\psi \big)f.$$
Straightforward diagram chases then show that the composite
$$M\xrightarrow f M'\underset A\otimes W \xrightarrow{M\underset A\otimes \epsilon} M'\underset A\otimes A \cong M'$$
is a morphism in $\cat M^{W'}_{A}$.
\end{rmk}

\begin{ex} Let $H$ be a bimonoid in $\cat M$, and let $A$ be an $H$-comodule algebra with $H$-coaction map $\rho$.  Let $B$ be a monoid in $\cat M$.
Let  $\varphi: \operatorname{Triv} (B)\to A$  be a morphism in $\cat {Alg}_{H}$.   The Galois map
$$\beta_{\vp}:A\underset B\otimes A \to A\otimes H$$
underlies a morphism of $A$-co-rings, from the canonical co-ring $W_{\vp}$ associated to $\vp$ to the co-ring $W^\rho$ associated to $\rho$.  When checking that 
$$(\beta _{\vp}\underset A\otimes \beta_{\vp})\circ \psi_{can}=\psi_{\rho}\circ \beta_{\vp},$$
it is very important to remember that the right $A$-action on $A\otimes H$ is defined using the coaction $\rho$.

The Galois map therefore induces a functor
$$(\beta_{\vp})_{*}: \cat D(\vp)\to \cat M^{W_\rho}_A,$$
which we call the \emph{Galois functor associated to $\vp$}.
\end{ex}

To discuss descent theory in a homotopical context, we need a model category structure on  $\cat M^W_{A}$, for certain co-rings $W$.  The next lemma, which is easy to prove, is the first step towards obtaining the desired structure.

\begin{lem} Let $(W,\psi, \epsilon)$ be an $A$-co-ring.  The forgetful functor $U_{W}:\cat M^W_{A}\to \cat {Mod}_{A}$ admits a left adjoint $-\underset A\otimes W: \cat {Mod}_{A}\to \cat M^W_{A}$, where the $W$-coaction on $M\underset A\otimes W$ is defined to be 
$$M\underset A\otimes \psi: M\underset A\otimes W\to M\underset A\otimes W\underset A\otimes W.$$
\end{lem}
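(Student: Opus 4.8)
The plan is to prove exactly the adjunction asserted in the lemma, relating $U_{W}$ and $-\underset A\otimes W$, by the same mechanism as the cofree comodule functor $-\otimes C$ of the first section: exhibit the unit and counit and check the triangle identities (equivalently, produce a natural hom-set bijection). First I would confirm that $-\underset A\otimes W$ really takes values in $\cat M^{W}_{A}$. For a right $A$-module $M$, the proposed coaction $M\underset A\otimes\psi$ is coassociative and counital because these identities are obtained by applying the functor $M\underset A\otimes-$ to the coassociativity and counitality of the comultiplication $\psi$ of the $A$-co-ring $W$; this is legitimate precisely because $\psi$ and $\epsilon$ are morphisms of $A$-bimodules, so that $M\underset A\otimes-$ is defined on them.

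The core of the argument is the natural bijection
$$\cat M^{W}_{A}\big(N,\,M\underset A\otimes W\big)\;\cong\;\cat{Mod}_{A}\big(U_{W}N,\,M\big),$$
natural in the right $A$-module $M$ and the $W$-comodule $(N,\theta)$. In one direction I would send a morphism of $A$-modules $f\colon U_{W}N\to M$ to the composite $(f\underset A\otimes W)\circ\theta\colon N\to M\underset A\otimes W$; in the other, a morphism of $W$-comodules $g\colon N\to M\underset A\otimes W$ to the $A$-module map $(M\underset A\otimes\epsilon)\circ g$, using the counit $\epsilon\colon W\to A$ of the co-ring and the isomorphism $M\underset A\otimes A\cong M$. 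Phrased through the structure maps, the unit at $(N,\theta)$ is the coaction $\theta$ itself, viewed as a comodule morphism into the cofree comodule on $U_{W}N$, and the counit at $M$ is $M\underset A\otimes\epsilon$.

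The step I expect to be the main obstacle is verifying that $(f\underset A\otimes W)\circ\theta$ is a morphism of $W$-comodules and not merely of $A$-modules, since $f$ is assumed compatible only with the $A$-actions. This reduces to comparing $(f\underset A\otimes W)\circ\theta$ postcomposed with the cofree coaction $M\underset A\otimes\psi$ against the same map tensored with $W$ and precomposed with $\theta$; the square closes by invoking the coassociativity of $\theta$, namely $(\theta\underset A\otimes W)\circ\theta=(N\underset A\otimes\psi)\circ\theta$, together with the bifunctoriality of $\underset A\otimes$. Granting this, the two assignments are mutually inverse by the counit axiom for $\theta$ and the counit axiom for $\psi$ respectively, and the triangle identities follow from the very same two counit computations; naturality in $M$ and in $N$ is then a routine diagram chase, completing the verification that $-\underset A\otimes W$ and $U_{W}$ form the adjoint pair asserted in the statement.
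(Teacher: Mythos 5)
Your proof is correct, and since the paper dismisses this lemma as ``easy to prove'' and gives no argument at all, your write-up supplies precisely the standard verification the author had in mind: the cofree coaction $M\underset A\otimes \psi$ is coassociative and counital because $M\underset A\otimes -$ can be applied to the axioms for $\psi$ and $\epsilon$ (legitimate since these are bimodule maps), the hom-set bijection is implemented by $f\mapsto (f\underset A\otimes W)\circ\theta$ and $g\mapsto (M\underset A\otimes\epsilon)\circ g$, the comodule-map check closes via coassociativity of $\theta$ and bifunctoriality of $\underset A\otimes$, and the two counit axioms show the assignments are mutually inverse. One point deserves explicit mention: your bijection $\cat M^W_A\big(N, M\underset A\otimes W\big)\cong \cat{Mod}_A\big(U_W N, M\big)$ exhibits $-\underset A\otimes W$ as the \emph{right} adjoint of $U_W$ (equivalently, $U_W$ as the left adjoint), whereas the lemma as printed says ``left adjoint''; your handedness is the correct one --- a literal left adjoint of this form would be false in general --- and it is the direction the paper actually uses, since the earlier cofree functor $-\otimes C$ is stated to be right adjoint to $U_C$ and Theorem \ref{thm:co-ring} takes $U_W$ as the left member of the Quillen pair, so you have in effect silently corrected a typo rather than deviated from the intended statement.
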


We can now apply the machinery of section \ref{subsec:induced}, in particular Corollary \ref{cor:postnikov},  to deducing the existence of model category structure on $\cat M^W_{A}$.

\begin{thm}\label{thm:co-ring} Assuming Convention \ref{conv:monmod}, let $A$ be a monoid in $\cat M$.  Let $W$ be an $A$-co-ring  such that $\cat M^W_{A}$ is finitely bicomplete, and let $U_{W}:\cat M^W_{A}\to \cat {Mod}_{A}$ denote the forgetful functor. Let 
$$\mathsf W=U_{C}^{-1}(\mathsf{WE}_{\cat {Mod}_{A}}) \text { and } \mathsf C=U_{C}^{-1}(\mathsf{Cof}_{\cat {Mod}_{A}}),$$
and
$$\mathsf X=  \mathsf{Fib}_{\cat {Mod}_{A}} \underset A\otimes W\text { and }\mathsf Z= (\mathsf{Fib}_{\cat {Mod}_{A}}\cap\mathsf{WE}_{\cat {Mod}_{A}}) \underset A\otimes W.$$
If $\mathsf{Post}_{Z}\subset \mathsf W$ and for all $f\in \mor \cat M^W_{A}$
there exist 
\begin{enumerate}
\item [(a)] $i\in \mathsf{C} $ and $p\in \mathsf{Post}_{ \mathsf Z}$ such
that $f=pi$; 
\smallskip
\item [(b)]$j\in \mathsf{C}\cap \mathsf{W}$ and $q\in \mathsf{Post}_{\mathsf X}$ such that
$f=qj$,
\end{enumerate}
then    $\mathsf W$, $\mathsf C$  and $\widehat{\mathsf{Post}_{\mathsf X}}$ are the weak equivalences, cofibrations and fibrations in a model category structure on $\cat M^W_{A}$, with respect to which 
$$U_{W}:\cat M^W_{A} \adjunct {}{}Ê\cat {Mod}_{A}: -\underset A\otimes W$$ 
is a Quillen pair.
\end{thm}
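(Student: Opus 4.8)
The plan is to obtain the statement as a direct application of Corollary \ref{cor:postnikov}, precisely as Theorems \ref{thm:post-comod} and \ref{thm:post-comodalg} were deduced from it, but now across the forgetful/cofree adjunction $U_{W}:\cat M^W_{A}\adjunct{}{}\cat {Mod}_{A}:-\underset A\otimes W$ supplied by the preceding lemma. Here $U_{W}$ is the functor along which we left-induce, and $\cat M^W_{A}$ is assumed finitely bicomplete, which is the standing hypothesis needed to run the left-induction machinery.

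The only ingredient not handed to us verbatim is a Postnikov presentation of the base category $\cat {Mod}_{A}$. Under Convention \ref{conv:monmod}, Theorem 4.1 of \cite{schwede-shipley} guarantees that $U_{A}:\cat {Mod}_{A}\to \cat M$ right-induces a cofibrantly generated model structure on $\cat {Mod}_{A}$, so in particular $\cat {Mod}_{A}$ is a genuine model category. Both $\mathsf{Fib}_{\cat {Mod}_{A}}$ and $\mathsf{Fib}_{\cat {Mod}_{A}}\cap\mathsf{WE}_{\cat {Mod}_{A}}$ are then closed under pullback and under inverse limits of towers, so each coincides with its own $\mathsf{Post}$-closure (the phenomenon noted after Lemma \ref{lem:postfact}), and the factorization axioms supply the factorizations making $(\mathsf{Fib}_{\cat {Mod}_{A}},\,\mathsf{Fib}_{\cat {Mod}_{A}}\cap\mathsf{WE}_{\cat {Mod}_{A}})$ a Postnikov presentation of $\cat {Mod}_{A}$. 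The classes $\mathsf X$ and $\mathsf Z$ of the theorem are exactly the images of this presentation under the cofree functor $-\underset A\otimes W$, i.e. the data that Corollary \ref{cor:postnikov} consumes across the adjunction.

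With the presentation fixed, the remaining hypotheses of the theorem are literally those of Corollary \ref{cor:postnikov}: the classes $\mathsf W=U_{W}^{-1}(\mathsf{WE}_{\cat {Mod}_{A}})$ and $\mathsf C=U_{W}^{-1}(\mathsf{Cof}_{\cat {Mod}_{A}})$ are the prescribed left-induced weak equivalences and cofibrations, the containment $\mathsf{Post}_{\mathsf Z}\subset\mathsf W$ is assumed, and the two families of Postnikov factorizations (a) and (b) are assumed to exist for every morphism of $\cat M^W_{A}$. Invoking the corollary then produces the model structure with weak equivalences $\mathsf W$, cofibrations $\mathsf C$ and fibrations $\widehat{\mathsf{Post}_{\mathsf X}}$; since $U_{W}$ sends $\mathsf C$ into $\mathsf{Cof}_{\cat {Mod}_{A}}$ and $\mathsf W$ into $\mathsf{WE}_{\cat {Mod}_{A}}$ by construction, it is left Quillen, and $(U_{W},-\underset A\otimes W)$ is the asserted Quillen pair.

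I expect no difficulty in this formal reduction; the genuine content sits entirely inside the hypotheses that the theorem takes as given. The two hard points — which one must confront in any concrete instance rather than here — are that $\cat M^W_{A}$ be finitely bicomplete (limits over co-rings being as delicate as for comodules, cf. Remark \ref{rmk:limits}) and, above all, that the Postnikov factorizations (a) and (b) actually exist. These are the analogues of Lemma \ref{lem:postfact} and the inductive Lemma \ref{lem:inductive-step}: factorization (a) should follow from a general argument once the coaction $\theta:M\to M\underset A\otimes W$ is a cofibration and $-\underset A\otimes W$ is suitably exact, whereas (b) will require an inductive construction of semi-cofree resolutions of $W$-comodules, carried out case by case. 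That inductive construction, not the present abstract deduction, is where the real work lies.
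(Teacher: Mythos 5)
Your proposal is correct and takes essentially the same route as the paper: Theorem \ref{thm:co-ring} is obtained there exactly as Theorems \ref{thm:post-comod} and \ref{thm:post-comodalg} were, by a direct application of Corollary \ref{cor:postnikov} to the forgetful/cofree adjunction $U_{W}:\cat M^W_{A}\adjunct{}{}\cat {Mod}_{A}:-\underset A\otimes W$ of the preceding lemma, using the model structure on $\cat{Mod}_{A}$ supplied by Convention \ref{conv:monmod} together with its tautological Postnikov presentation $(\mathsf{Fib}_{\cat{Mod}_{A}},\ \mathsf{Fib}_{\cat{Mod}_{A}}\cap\mathsf{WE}_{\cat{Mod}_{A}})$ from Remark \ref{rmk:postnikov-lift}. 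Your closing observation that the genuine work lies in verifying finite bicompleteness and the two factorization hypotheses in concrete cases also matches the paper's treatment, which assumes these as hypotheses.
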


\begin{rmk}  If $\gamma:W\to W'$ is a morphism of $A$-co-rings such that both $\cat M^W_{A}$ and $\cat M^{W'}_{A}$ admit model structures left-induced by forgetting comodule structure, then it is easy to see that  the induced functor $\gamma_{*}:\cat M^W_{A}\to \cat M^{W'}_{A}$ preserves both cofibrations and weak equivalences.  It follows that 
$$\gamma_{*}:\cat M^W_{A}\adjunct{}{} \cat M^{W'}_{A}:-\underset {W'}\square \gamma_{*}(W)$$
is a Quillen pair, which is a Quillen equivalence if for all cofibrant objects $(M,\theta)$ in $\cat M^W_{A}$ and all fibrant objects $(M',\theta')$ in $\cat M^{W'}_{A}$, a morphism in $\cat M^W_{A}$
$$f:(M,\theta)\to (M',\theta')\underset {W'}\square \gamma_{*}(W)$$
is a weak equivalence if and only if its transpose
$$f^\flat :\gamma_{*}(M,\theta)\to (M',\theta')$$
is a weak equivalence.  In particular, if $(M,\theta)$ is cofibrant in $\cat M^W_{A}$ and $\gamma_{*}(M,\theta)$ is fibrant in $\cat M^{W'}_{A}$, then the unit of the adjunction
$$\eta_{M}:(M,\theta)\to \gamma_{*}(M,\theta)\underset {W'}\square \gamma_{*}(W)$$
must be a weak equivalence in $\cat M^W_{A}$, if $\gamma_{*}$ is a Quillen equivalence.

Recall that, because of our choice of model structure on $\cat M^W_{A}$, an object $(M, \theta)$ is cofibrant in $\cat M^W_{A}$ if and only if the underlying $A$-module $M$ is cofibrant in $\cat {Mod}_{A}$ and that a morphism in $\cat M^W_{A}$ is a weak equivalence if and only if the underlying morphism in $\cat M$ is a weak equivalence.
\end{rmk}

\subsection{The structure theorem}

Schneider's structure theorem relates $H$-Hopf-Gal\-ois extensions of rings $\vp:B\to A$ and the category $\cat M^{W_\rho}_A$. Before stating the theorem, we need to introduce yet another pair of adjoint functors. Recall that, if $H$ is a bimonoid and $A$ is an $H$-comodule algebra, we can view the objects in $\cat M^{W_\rho}_A$ as $A$-modules $M$ equipped with an $H$-coaction $\theta: M\to M\otimes H$ that is a morphism of $A$-modules. This is the point of view adopted in the definition below.

\begin{defn} Let $H$ be a bimonoid in $\cat M$, and let $A$ be an $H$-comodule algebra with $H$-coaction map $\rho$.  Let $B$ be a monoid in $\cat M$. Let  $\varphi: \operatorname{Triv} (B)\to A$  be a morphism in $\cat {Alg}_{H}$, and let $W_{\rho}$ denote the co-ring $(A\otimes H, \psi_{\rho}, \epsilon _{\rho})$.

The \emph{$\rho$-induction functor} 
$$\operatorname{Ind}_{\rho}: \cat{Mod}_{A^{co\, H}}\to \cat M^{W_{\rho}}_{A}$$  is defined on objects by 
$$\operatorname{Ind}_{\rho} (M)=(M\underset {A^{co\, H}}\otimes A, M\underset {A^{co\, H}}\otimes \rho ),$$
while the \emph{$\rho$-coinvariants functor} 
$$(-)^{co \, \rho}:\cat M^{W_{\rho}}_{A}\to \cat{Mod}_{A^{co\, H}},$$ 
is defined on objects so that for all $(M,\theta)$,
the diagram
$$(M, \theta)^{co \, \rho}\to \operatorname{equal}(M\egal{\theta}{M\otimes \eta} M\otimes H)$$
is an equalizer, computed in $\cat {Mod}_{A^{co\, H}}$.
\end{defn}

\begin{rmk} It is not difficult to show that $(\operatorname{Ind}_{\rho}, (-)^{co\, \rho})$ is an adjoint pair.
\end{rmk}

We can now state Schneider's structure theorem.

\begin{thm} \cite{schneider} Let $\Bbbk$ be a commutative ring, and let $H$ be a $\Bbbk$-flat Hopf algebra.  The following are equivalent for any $H$-comodule algebra $A$, with coinvariant algebra $B=A^{co\, H}$.
\begin{enumerate}
\item The inclusion $B\hookrightarrow A$ is an $H$-Hopf-Galois extension, and $A$ is a faithfully flat $B$-module.
\item The functor $\operatorname{Ind}_{\rho}: \cat {Mod}_{B}\to \cat M_{A}^{W_{\rho}}$ is an equivalence, where $\rho$ denotes the $H$-coaction on $A$.
\end{enumerate}
\end{thm}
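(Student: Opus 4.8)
The plan is to reduce the statement to two classical pillars: Grothendieck's faithfully flat descent and the identification of the descent category $\cat D(\varphi)$ with $\cat M^{W_\rho}_A$ through the Galois map. The crucial structural observation is that the $\rho$-induction functor factors as
$$\operatorname{Ind}_{\rho}=(\beta_{\varphi})_{*}\circ K,$$
where $K:\cat{Mod}_{B}\to \cat D(\varphi)$ is the descent comparison functor $K(M)=(M\underset B\otimes A,\theta_{can})$, with $\theta_{can}$ the canonical descent datum, and $(\beta_{\varphi})_{*}:\cat D(\varphi)=\cat M^{W_{\varphi}}_{A}\to \cat M^{W_\rho}_{A}$ is the Galois functor induced by the co-ring morphism $\beta_{\varphi}:W_{\varphi}\to W^{\rho}$. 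Unwinding the coactions, using that $\beta_{\varphi}=(\bar\mu\otimes H)(A\underset B\otimes\rho)$, one checks that pushing $\theta_{can}$ on $M\underset B\otimes A$ along $\beta_{\varphi}$ reproduces exactly the coaction $M\underset B\otimes\rho$, which establishes the factorization.

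For $(1)\Rightarrow(2)$, I would argue that the Hopf-Galois hypothesis asserts $\beta_{\varphi}$ is an isomorphism; since an isomorphism of $A$-co-rings induces an isomorphism of the associated comodule categories, $(\beta_{\varphi})_{*}$ is an equivalence. Faithful flatness of $A$ over $B$ is precisely the hypothesis of faithfully flat descent, which yields that $K$ is an equivalence. Composing via the factorization above shows that $\operatorname{Ind}_{\rho}$ is an equivalence.

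For $(2)\Rightarrow(1)$ I would first extract faithful flatness, then the Galois condition. Since $H$ is $\Bbbk$-flat, $-\underset A\otimes W^{\rho}\cong -\otimes H$ is exact, so $\cat M^{W_\rho}_{A}$ is abelian and the forgetful functor $U_{W^{\rho}}$ to $\cat{Mod}_{A}$ is exact and faithful; restriction along $\varphi$ is likewise exact and faithful. Hence the composite of $\operatorname{Ind}_{\rho}$ with these forgetful functors is, up to restriction, the functor $-\underset B\otimes A$. An equivalence of abelian categories is exact, so this composite is exact, forcing $A$ to be flat over $B$; and since $\operatorname{Ind}_{\rho}$ is faithful while $\operatorname{Ind}_{\rho}(M)=0$ exactly when $M\underset B\otimes A=0$, we get $M\underset B\otimes A=0\Rightarrow M=0$, i.e.\ $A$ is faithfully flat over $B$. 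With faithful flatness in hand, descent again provides that $K$ is an equivalence; combined with the equivalence $\operatorname{Ind}_{\rho}$ and the factorization $\operatorname{Ind}_{\rho}=(\beta_{\varphi})_{*}\circ K$, it follows that $(\beta_{\varphi})_{*}$ is an equivalence. Finally I would recover that $\beta_{\varphi}$ is itself an isomorphism: since $(\beta_{\varphi})_{*}$ commutes with the forgetful functors to $\cat{Mod}_{A}$, it carries right adjoints to right adjoints, hence cofree comodules to cofree comodules, and evaluating the resulting natural isomorphism of cofree functors at the $A$-module $A$ identifies the underlying $A$-module map with $\beta_{\varphi}:A\underset B\otimes A\to A\otimes H$, which is therefore an isomorphism. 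As $\beta_{\varphi}$ is already a morphism of co-rings, this says exactly that $B\hookrightarrow A$ is Hopf-Galois.

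The main obstacle is the last step of the converse: passing from the abstract equivalence $(\beta_{\varphi})_{*}$ of comodule categories to the concrete conclusion that the co-ring map $\beta_{\varphi}$ is an isomorphism. This requires that the assignment $W\mapsto(\cat M^{W}_{A},U_{W})$ be faithful enough to detect isomorphisms of co-rings — equivalently, that the Galois functor preserve and reflect the cofree objects generated by $A$ — and it is here that the $A$-flatness of $W^{\rho}$ and the faithful flatness of $A$ over $B$ are genuinely used. One must also confirm that the equivalence in $(2)$ is realized by the adjoint pair $(\operatorname{Ind}_{\rho},(-)^{co\,\rho})$, so that its unit and counit are correctly identified with the descent (co)unit and the Galois comparison.
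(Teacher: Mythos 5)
Your proposal is correct and is essentially the argument the paper itself points to: the paper does not prove Schneider's theorem (it is quoted from \cite{schneider}, with Schauenburg's descent-based proof in \cite{schauenburg} cited as the model), and your factorization $\operatorname{Ind}_{\rho}=(\beta_{\varphi})_{*}\circ\operatorname{Can}$ --- faithfully flat descent governing $\operatorname{Can}$, the co-ring comparison governing $(\beta_{\varphi})_{*}$ --- is exactly that proof, and is the same triangle of functors on which the paper bases its homotopic analogue in section 4. The step you flag as the main obstacle is in fact purely formal and needs no flatness: the canonical map $X\underset{A}\otimes\beta_{\varphi}\colon(\beta_{\varphi})_{*}\bigl(X\underset{A}\otimes W_{\varphi}\bigr)\to X\underset{A}\otimes W_{\rho}$ is compatible with both (forgetful, cofree) adjunctions precisely because $\beta_{\varphi}$ is a morphism of co-rings (counit and comultiplication compatibility), so full faithfulness of $(\beta_{\varphi})_{*}$ together with the Yoneda lemma forces each such map, in particular its value at $X=A$, namely $\beta_{\varphi}$ itself, to be invertible.
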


In \cite{schauenburg} Schauenburg provides an elegant proof of Schneider's theorem, based on the characterization of faithfully flat ring extensions in terms of descent.  Our goal is to construct a homotopic version of Schauenburg's argument, in order to prove an analogue of  Schneider's theorem.

Constructing an argument like Schauenburg's requires that we specify what we mean by faithful flatness of monoid extensions in model categories.  We begin by recalling how faithfully flat descent works for rings.

\begin{defn} The \emph{canonical descent datum} functor $$\operatorname{Can}: \cat{Mod}_{B}\to \cat D(\vp),$$ is defined on objects by $\operatorname{Can}(M)=(M\underset B\otimes A, \theta_{M})$, with $\theta_{M}=M\underset B\otimes \vp \underset B\otimes A$.  The functor $\operatorname{Can}$ admits a right adjoint $$\operatorname{Coinv}:\cat D(\vp)\to \cat {Mod}_{B},$$ where
$$\operatorname{Coinv}(N,\theta)=\operatorname{equal}( N\egal {\theta}{N\underset B\otimes \vp} N\underset B\otimes A).$$
\end{defn}

We can now formulate faithfully flat descent for rings, for which one reference is \cite{benabou-roubaud}.  The formulation we choose is based on Theorem 4.5.2 in \cite{schauenburg}.

\begin{thm} \label{thm:faithflat} Let $\vp: B\to A$ be an inclusion of rings. The functor $\operatorname{Can}: \cat{Mod}_{B}\to \cat D(\vp)$ is an equivalence of categories, with inverse $\operatorname{Coinv}:\cat D(\vp)\to \cat {Mod}_{B}$, if and only if $A$ is faithfully flat as a $B$-module.
\end{thm}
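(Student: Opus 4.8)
The plan is to recognize this as the classical theorem of faithfully flat descent for modules and to prove both implications by determining when the unit and counit of the adjunction $(\operatorname{Can}, \operatorname{Coinv})$ are natural isomorphisms. Since an adjunction is an adjoint equivalence exactly when its unit and counit are isomorphisms, and since $\operatorname{Coinv}$ is by hypothesis the right adjoint of $\operatorname{Can}$, the whole statement reduces to analyzing these two naturality conditions. In particular, once the adjunction is shown to be an equivalence, the quasi-inverse is automatically the right adjoint $\operatorname{Coinv}$.

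Assume first that $A$ is faithfully flat as a $B$-module, and consider the unit. For a right $B$-module $M$, the component $\eta_{M}\colon M\to \operatorname{Coinv}(\operatorname{Can} M)$ is an isomorphism precisely when the augmented complex
$$M\to M\underset B\otimes A \egal{}{} M\underset B\otimes A\underset B\otimes A,$$
whose parallel arrows are the two maps defining $\operatorname{Coinv}(\operatorname{Can} M)$, is an equalizer. First I would apply $-\underset B\otimes A$ to this complex: because $A$ is a $B$-algebra, the multiplication $A\underset B\otimes A\to A$ furnishes an extra degeneracy, so the base-changed complex becomes split and is therefore a (split) equalizer. Faithful flatness of $A$ means that $-\underset B\otimes A$ reflects exactness, so the original complex is an equalizer and $\eta_{M}$ is an isomorphism.

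Next I would treat the counit, which encodes the \emph{effectivity} of descent. Given a descent datum $(N,\theta)$, set $M=\operatorname{Coinv}(N,\theta)$; one must show that the counit $\operatorname{Can} M = M\underset B\otimes A\to N$ is an isomorphism of descent data. Again I would verify this after applying $-\underset B\otimes A$: the cocycle and counit axioms for $\theta$ imply that the descent datum trivializes after base change, which identifies the base-changed counit with a split-exactness isomorphism, and faithful flatness then descends this to an isomorphism over $B$. This establishes that $(\operatorname{Can},\operatorname{Coinv})$ is an adjoint equivalence.

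For the converse, suppose $\operatorname{Can}$ is an equivalence with inverse $\operatorname{Coinv}$. Limits and colimits in $\cat D(\vp)$ are created by the forgetful functor to $\cat{Mod}_{A}$, so the exactness of the equivalence $\operatorname{Can}$ forces $M\mapsto M\underset B\otimes A$ to be exact, i.e.\ $A$ is flat over $B$; and since an equivalence is conservative, $M\underset B\otimes A=0$ forces $M=0$, which together with flatness is faithful flatness. The main obstacle is the effectivity half of the forward direction: producing the isomorphism $M\underset B\otimes A\to N$ and checking its compatibility with the coactions, where the descent axioms must be used carefully to trivialize $\theta$ after base change. The unit half, by contrast, is the familiar faithfully-flat-descent lemma, resting on the extra-degeneracy trick that makes the base-changed Amitsur complex split.
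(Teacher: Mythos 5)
First, a caveat about the comparison you were asked for: the paper does not prove this statement at all --- it is recalled from the literature (\cite{benabou-roubaud}, and Theorem 4.5.2 of \cite{schauenburg}) as motivation for the definition of homotopic faithful flatness --- so your argument has to stand on its own. Your forward direction is the standard faithfully flat descent argument and is sound as a sketch: the unit is handled by the split Amitsur-equalizer trick together with the fact that $-\underset B\otimes A$ reflects exactness, and the step you leave vague for the counit --- that the coaction $\theta$ itself trivializes the descent datum after base change, i.e.\ that the counit and cocycle axioms exhibit $\theta$ as an isomorphism of $N$ onto the kernel of $\bigl(\theta-(N\underset B\otimes \vp)\bigr)\underset B\otimes A$, which flatness identifies with $\operatorname{Coinv}(N,\theta)\underset B\otimes A$ --- is a standard computation that can be filled in.

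The converse, however, contains a genuine gap. You assert that limits and colimits in $\cat D(\vp)$ are created by the forgetful functor to $\cat {Mod}_{A}$. For colimits this is true, since $-\underset B\otimes A$ is right exact, but for limits it is unjustified and in fact circular: the kernel of a morphism of descent data is computed on underlying $A$-modules only when $-\underset B\otimes A$ preserves kernels, which is essentially the flatness you are trying to prove (compare Remark \ref{rmk:limits} of the paper, which warns that comodule categories need not even admit limits). When $\operatorname{Can}$ is an equivalence, $\cat D(\vp)$ does have kernels, but they are obtained by transporting kernels from $\cat{Mod}_{B}$ along the equivalence, not by taking kernels of underlying module maps; so from ``$\operatorname{Can}$ preserves kernels'' you cannot conclude that $-\underset B\otimes A:\cat{Mod}_{B}\to\cat{Mod}_{A}$ is left exact. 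Only the faithfulness half of your converse (conservativity of an equivalence) is correct.

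Moreover, this gap cannot be closed, because the ``only if'' direction fails as literally stated. Let $\vp:B\to A$ be any ring inclusion admitting a $B$-bimodule retraction $\sigma:A\to B$ with $\sigma(1)=1$; for instance $B=\mathbb Z$ and $A=\mathbb Z\oplus\mathbb Q/\mathbb Z$ with square-zero multiplication on $\mathbb Q/\mathbb Z$. A direct check shows that both the unit and the counit of $(\operatorname{Can},\operatorname{Coinv})$ are isomorphisms: for the unit, applying $M\otimes\sigma\otimes A$ to an element $\sum_i m_i\otimes a_i$ of the equalizer exhibits it as $\eta_M\bigl(\sum_i m_i\sigma(a_i)\bigr)$; for the counit, the map $(p\underset B\otimes A)\circ\theta$, where $p=(N\otimes\sigma)\circ\theta$ is a $B$-linear projection of $N$ onto $\operatorname{Coinv}(N,\theta)$, is a two-sided inverse (the cocycle axiom gives $\theta\circ p=(-\otimes 1)\circ p$ and $\epsilon\circ(p\underset B\otimes A)\circ\theta=\mathrm{id}_N$). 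Thus $\operatorname{Can}$ is an equivalence with inverse $\operatorname{Coinv}$, yet $\mathbb Z\oplus\mathbb Q/\mathbb Z$ is not flat over $\mathbb Z$. This is an instance of the Joyal--Tierney/Mesablishvili theorem that effectiveness of descent is equivalent to \emph{purity} of $\vp$ as a map of $B$-modules, which is strictly weaker than faithful flatness; any proof of the converse as stated must therefore break down somewhere, and yours breaks down precisely at the exactness-transfer step.
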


The definition of homotopic faithful flatness proposed here is inspired by this theorem.   We begin by showing that the adjoint pairs introduced in this section are Quillen pairs, under appropriate hypotheses.

\begin{convention} \label{conv:co-ring} Henceforth we suppose that  the canonical co-ring $W_{\vp}$ associated to $\vp$ and the co-ring $W_{\rho}$ associated to $\rho$ are such that the forgetful functors to  $\cat {Mod}_{A}$ left-induce model category structure on $M^{W}_{A}$ for $W=W_{\vp}, W_{\rho}$ and therefore on $\cat D(\vp)$. For example, if the hypotheses of Theorem \ref{thm:co-ring} are satisfied, then this convention holds. 
\end{convention}

\begin{lem}  Assuming Conventions \ref{conv:monmod} and  \ref{conv:co-ring}, let $H$ be a bimonoid in $\cat M$, and let $A$ be an $H$-comodule algebra with $H$-coaction map $\rho$.  Let $B$ be a monoid in $\cat M$. Let  $\varphi: \operatorname{Triv} (B)\to A$  be a morphism in $\cat {Alg}_{H}$, and let $W_{\rho}$ denote the co-ring $(A\otimes H, \psi_{\rho}, \epsilon _{\rho})$.

The adjoint pairs
$$\operatorname{Ind}_{\rho}: \cat{Mod}_{A^{co\, H}}\to \cat M^{W_{\rho}}_{A}:(-)^{co\, \rho}$$
and  
$$\operatorname{Can}:\cat {Mod}_{B}\adjunct{}{}\cat D(\vp):\operatorname{Coinv}$$ 
are Quillen pairs.
\end{lem}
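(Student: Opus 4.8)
The plan is to exploit the fact that, by Convention \ref{conv:co-ring}, both $\cat M^{W_{\rho}}_{A}$ and $\cat D(\vp)=\cat M^{W_{\vp}}_{A}$ carry the model structure left-induced by the forgetful functor to $\cat {Mod}_{A}$, whereas $\cat {Mod}_{B}$ and $\cat {Mod}_{A^{co\, H}}$ carry the model structure right-induced from $\cat M$ (Convention \ref{conv:monmod}). In a left-induced structure both the cofibrations and the weak equivalences are created by the forgetful functor, so the acyclic cofibrations are precisely those morphisms whose underlying map of $A$-modules is an acyclic cofibration. Thus, to check that each \emph{left} adjoint is left Quillen, it suffices to post-compose it with the relevant forgetful functor and verify that the resulting functor into $\cat {Mod}_{A}$ preserves cofibrations and acyclic cofibrations. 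First I would record the standard fact that, for any morphism of monoids $g:R\to S$ in $\cat M$, extension of scalars $-\underset R\otimes S:\cat {Mod}_{R}\to \cat {Mod}_{S}$ is left Quillen: its right adjoint, restriction of scalars along $g$, leaves the underlying object of $\cat M$ unchanged and so automatically preserves fibrations and acyclic fibrations, both of which are detected in $\cat M$ by right-induction \cite{schwede-shipley}.

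For the pair $(\operatorname{Ind}_{\rho}, (-)^{co\, \rho})$, the key observation is that the underlying right $A$-module of $\operatorname{Ind}_{\rho}(M)=(M\underset {A^{co\, H}}\otimes A, M\underset {A^{co\, H}}\otimes \rho)$ is simply $M\underset {A^{co\, H}}\otimes A$; that is, the composite $U_{W_{\rho}}\circ \operatorname{Ind}_{\rho}$ is precisely extension of scalars along the monoid map $A^{co\, H}\hookrightarrow A$. By the fact above this composite preserves cofibrations and acyclic cofibrations, so if $f$ is a cofibration (resp. an acyclic cofibration) in $\cat {Mod}_{A^{co\, H}}$, then $U_{W_{\rho}}\operatorname{Ind}_{\rho}(f)$ is a cofibration (resp. an acyclic cofibration) in $\cat {Mod}_{A}$; since the model structure on $\cat M^{W_{\rho}}_{A}$ is left-induced, this says exactly that $\operatorname{Ind}_{\rho}(f)$ is a cofibration (resp. an acyclic cofibration) in $\cat M^{W_{\rho}}_{A}$. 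Hence $\operatorname{Ind}_{\rho}$ is left Quillen.

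The argument for $(\operatorname{Can}, \operatorname{Coinv})$ is identical in form. Here the underlying $A$-module of $\operatorname{Can}(M)=(M\underset B\otimes A,\theta_{M})$ is $M\underset B\otimes A$, so that $U_{W_{\vp}}\circ \operatorname{Can}$ is extension of scalars along $\vp:B\to A$. Exactly as before, this composite preserves cofibrations and acyclic cofibrations, and left-induction of the structure on $\cat D(\vp)$ then upgrades this to the statement that $\operatorname{Can}$ itself preserves cofibrations and acyclic cofibrations, so $(\operatorname{Can},\operatorname{Coinv})$ is a Quillen pair.

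I expect no serious difficulty here; the proof reduces to two bookkeeping identifications of the relevant composites with extension-of-scalars functors, together with the left-induction property. The one point deserving care is the decision to argue via the left adjoints rather than the right adjoints: because the fibrations in the left-induced structures are the rather opaque class $\widehat{\mathsf{Post}_{\mathsf X}}$ of Theorem \ref{thm:co-ring}, checking directly that $(-)^{co\, \rho}$ or $\operatorname{Coinv}$ preserves fibrations would be awkward, whereas the left-adjoint formulation reduces everything to the transparent behaviour of cofibrations and weak equivalences under extension of scalars.
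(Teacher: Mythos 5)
Your proof is correct, but it takes a different route from the paper's. The paper argues via cofibrant generation: it identifies the generating (acyclic) cofibrations of $\cat{Mod}_{B}$ as $\op I\otimes B$ (resp.\ $\op J\otimes B$), computes $\operatorname{Can}(i\otimes B)=i\otimes A$, observes via left-induction that this is an (acyclic) cofibration in $\cat D(\vp)$, and then propagates to all (acyclic) cofibrations using the fact that they are retracts of transfinite compositions of pushouts of generating ones, together with the preservation of colimits and retracts by the left adjoint $\operatorname{Can}$. You instead factor each left adjoint through the forgetful functor, identify $U_{W_{\rho}}\circ\operatorname{Ind}_{\rho}$ and $U_{W_{\vp}}\circ\operatorname{Can}$ with extension of scalars along $A^{co\, H}\to A$ and $\vp:B\to A$ respectively, and prove that extension of scalars is left Quillen by transposing to its right adjoint: restriction of scalars leaves the underlying object of $\cat M$ unchanged, hence preserves fibrations and weak equivalences, since both are created in $\cat M$ by right-induction. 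Both arguments hinge in the same way on the left-induced nature of the structures on $\cat D(\vp)$ and $\cat M^{W_{\rho}}_{A}$ (a cofibration or weak equivalence there is detected on underlying $A$-modules); the difference lies in how one sees that the underlying $A$-module map of the image is an (acyclic) cofibration. Your transposition argument is shorter and avoids any appeal to the cell-complex structure of cofibrations or to the explicit generating sets, so it applies verbatim even when one does not wish to invoke cofibrant generation of the module categories; the paper's argument, in exchange, records the explicit and potentially useful fact that $\operatorname{Can}$ carries generating (acyclic) cofibrations to generating (acyclic) cofibrations of $\cat{Mod}_{A}$, viewed in $\cat D(\vp)$.
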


\begin{proof} We do the proof for the pair $(\operatorname{Can}, \operatorname{Coinv})$; the case of the other pair is essentially identical. Let $\op I$ and $\op J$ denote the sets of generating cofibrations and of generating acyclic cofibrations of $\cat M$, respectively.  The sets of generating cofibrations and the generating acyclic cofibrations of $\cat {Mod}_{B}$ are then $\op I\otimes B$ and $\op J\otimes B$, while those of $\cat {Mod}_{A}$ are $\op I\otimes A$ and $\op J\otimes A$.  Recall that in a cofibrantly generated model category, any (acyclic) cofibration is  a retract of the composition of a directed system 
$$M_{0}\to M_{1}\to \cdots \to M_{\beta}\to M_{\beta +1}\to \cdots$$
where $M_{\beta+1}$ is obtained from $M_{\beta}$ by pushing out along a generating (acyclic) cofibration.  

Since the model structure on $\cat D(\vp)$ is left-induced by the forgetful functor to $\cat {Mod}_{A}$, if $i\otimes B$ is an (acyclic) generating cofibration in $\cat {Mod}_{B}$, then 
$$\operatorname{Can}(i\otimes B)=(i\otimes B)\underset B\otimes A=i\otimes A$$ 
is an (acyclic) cofibration in $\cat D(\vp)$.  Recall that the set of cofibrations in a model category is closed under pushouts, direct limits and retracts. Consequently, the image of any (acyclic) cofibration under the functor $\operatorname{Can}$ is an (acyclic) cofibration, as $\operatorname{Can} $ preserves both colimits and retracts.
\end{proof}

We can now formulate a homotopic version of faithful flatness, motivated by Theorem \ref{thm:faithflat}.

\begin{defn} Let $\vp:B\to A$ be a morphism of monoids in $\cat M$.   The monoid $A$ is \emph{homotopically faithfully flat} over $B$ if $$\operatorname{Can}:\cat {Mod}_{B}\adjunct{}{}\cat D(\vp):\operatorname{Coinv}$$ is a Quillen equivalence.
\end{defn}

In other words, $A$ is homotopically faithfully flat over $B$ if for any cofibrant $B$-module $M$ and fibrant descent datum $(N,\theta)$, a morphism of $B$-modules $$f: M\to \operatorname{Coinv} (N,\theta)=\operatorname{equal}( N\egal {\theta}{N\underset B\otimes \vp} N\underset B\otimes A)$$ is a weak equivalence if and only if its transpose $$f^\flat: (M\underset B\otimes A, M\underset B\otimes \vp\underset B\otimes A) \to (N, \theta)$$ is a weak equivalence of descent data.  In particular, if $M$ is a cofibrant $B$-module and $(M\underset B\otimes A, M\underset B\otimes \vp\underset B\otimes A)$ is a fibrant descent datum, then the unit of the adjunction
$$\eta_{M}:M\to \operatorname{Coinv} (M\underset B\otimes A, M\underset B\otimes \vp\underset B\otimes A)$$
must be a weak equivalence if $A$ is homotopically faithfully flat over $B$.

We conjecture that the following analogue of Lemma 2.3.5 in \cite {schauenburg} should hold, at least under strong enough conditions on $\vp$. 

\begin{conj} \label{conj:descent} Assuming Conventions \ref{conv:monmod} and \ref{conv:co-ring}, let $H$ be a bimonoid in $\cat M$. Suppose that the category $\cat{Alg}_{H}$ of $H$-comodule algebras admits a model structure with respect to which the coinvariants functor $\operatorname{Coinv}: \cat{Alg}_{H}\to  \cat{Alg}$ is a right Quillen functor. 

 Let $A$ be an $H$-comodule algebra, and let  $\varphi: \operatorname{Triv} (B)\to A$  be a morphism in $\cat {Alg}_{H}$, with associated Galois map $\beta_{\vp}:A\underset B\otimes A\to A\otimes H$.
 
 If $\vp$ is a homotopic $H$-Hopf-Galois extension, then 
 $$(\beta_{\vp})_{*}: \cat D(\vp)\to \cat M_{A}^{W_{\rho}}:-\underset {A\otimes H} \square(\beta_{\vp})_{*}(A\underset B\otimes A)$$
is  a pair of  Quillen equivalencs.
 \end{conj}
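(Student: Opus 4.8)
The plan is to reduce the claim to a statement about the single co-ring morphism $\beta_{\vp}\colon W_{\vp}\to W_{\rho}$, exploiting that, by condition~(1) of Definition~\ref{defn:hg-homotopic}, $\beta_{\vp}$ is a weak equivalence of underlying objects. Convention~\ref{conv:co-ring} ensures that both $\cat D(\vp)=\cat M^{W_{\vp}}_{A}$ and $\cat M^{W_{\rho}}_{A}$ carry model structures left-induced from $\cat {Mod}_{A}$, so the Remark following Theorem~\ref{thm:co-ring} already shows that $\big((\beta_{\vp})_{*},\,-\underset{W_{\rho}}\square(\beta_{\vp})_{*}(W_{\vp})\big)$ is a Quillen pair and, moreover, supplies the criterion we must verify: for every cofibrant $(M,\theta)\in\cat D(\vp)$ and every fibrant $(N,\xi)\in\cat M^{W_{\rho}}_{A}$, a morphism $f\colon (M,\theta)\to (N,\xi)\underset{W_{\rho}}\square(\beta_{\vp})_{*}(W_{\vp})$ is a weak equivalence if and only if its transpose $f^{\flat}\colon (\beta_{\vp})_{*}(M,\theta)\to (N,\xi)$ is.

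First I would isolate the general principle at work: if $\gamma\colon W\to W'$ is any morphism of $A$-co-rings whose underlying $A$-module map is a weak equivalence and both comodule categories admit left-induced structures, then $\gamma_{*}$ is a Quillen equivalence. The proof uses the adjunction identity $f^{\flat}=\epsilon_{(N,\xi)}\circ\gamma_{*}(f)$, where $\epsilon$ denotes the counit of $\big(\gamma_{*},\,-\underset{W'}\square\gamma_{*}(W)\big)$. Since $\gamma_{*}$ is the identity on underlying $A$-modules and weak equivalences in both categories are detected by the forgetful functors to $\cat M$, the map $\gamma_{*}(f)$ is a weak equivalence exactly when $f$ is. By the two-out-of-three property, the criterion of the previous paragraph therefore holds as soon as the counit $\epsilon_{(N,\xi)}$ is a weak equivalence for every fibrant $(N,\xi)$, and the entire problem reduces to this single assertion.

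To analyze the counit I would first observe that $\gamma$ is also a morphism of right $W'$-comodules $\gamma_{*}(W)\to W'$; this is precisely the comultiplicativity $(\gamma\underset A\otimes\gamma)\psi_{W}=\psi_{W'}\gamma$ satisfied by a co-ring map. Taking $\gamma=\beta_{\vp}$ and using condition~(1), we obtain a weak equivalence $(\beta_{\vp})_{*}(W_{\vp})\xrightarrow{\sim}W_{\rho}$ in $\cat M^{W_{\rho}}_{A}$. On underlying $A$-modules the counit $\epsilon_{(N,\xi)}$ is the composite
$$N\underset{W_{\rho}}\square(\beta_{\vp})_{*}(W_{\vp})\hookrightarrow N\underset A\otimes(A\underset B\otimes A)\xrightarrow{\;N\underset A\otimes\epsilon_{can}\;}N\underset A\otimes A\cong N,$$
which, under the counital isomorphism $N\underset{W_{\rho}}\square W_{\rho}\cong N$, is exactly the map on cotensors induced by $\beta_{\vp}$. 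Thus $\epsilon_{(N,\xi)}$ is the image of the weak equivalence $(\beta_{\vp})_{*}(W_{\vp})\to W_{\rho}$ under the functor $N\underset{W_{\rho}}\square(-)$, and it is a weak equivalence provided this functor is homotopy invariant whenever its first argument $(N,\xi)$ is fibrant.

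The homotopy invariance of the cotensor is where I expect the genuine difficulty to lie, and it is presumably the reason the statement appears as a conjecture. The left-induced, injective-type model structures of Theorem~\ref{thm:co-ring} are designed so that fibrant objects are built from Postnikov towers of cofree comodules, which ought to force the equalizer defining $(N,\xi)\underset{W_{\rho}}\square(-)$ to compute a derived cotensor, in direct analogy with the fact that the cobar construction computes $\operatorname{Cotor}$ in the chain examples of section~\ref{subsec:chcx}. Establishing this in the stated generality requires controlling the interchange of the defining equalizer with the relevant limits and weak equivalences, and I expect it to succeed only under additional hypotheses on $\vp$---for instance homotopical flatness of $A$ and $H$ together with cofibrancy of the underlying $A$-modules of $W_{\vp}$ and $W_{\rho}$---ensuring that $\beta_{\vp}$ is a weak equivalence between cofibrant objects and that the cotensor is genuinely derived. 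Once such conditions secure the homotopy invariance of the cotensor, the reduction above delivers the Quillen equivalence. I note finally that only condition~(1) of the Hopf-Galois hypothesis is used here; condition~(2) is what one expects to invoke for the complementary faithful-flatness statement governing $\operatorname{Can}$ in the eventual structure theorem.
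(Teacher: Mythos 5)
The statement you were asked to prove is stated in the paper as a \emph{conjecture}: the paper offers no proof of it, only the remark immediately following it, which observes that when $\beta_{\vp}$ is actually an isomorphism of $A$-co-rings (as in both explicit examples of section \ref{subsec:examples}), $(\beta_{\vp})_{*}$ is an equivalence of categories by the proof of Lemma 2.3.5 of \cite{schauenburg}, so the conjecture holds in those cases. Judged as an attempt to settle an open statement, your proposal does not succeed, as you yourself acknowledge: everything is reduced to the assertion that for every fibrant $(N,\xi)$ in $\cat M^{W_{\rho}}_{A}$ the functor $(N,\xi)\underset{W_{\rho}}\square(-)$ carries the weak equivalence $(\beta_{\vp})_{*}(W_{\vp})\to W_{\rho}$ to a weak equivalence, and this homotopy invariance of the cotensor is left unproven, supported only by an analogy with the cobar computations of section \ref{subsec:chcx} and a guess at supplementary flatness and cofibrancy hypotheses. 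That is precisely the mathematical content that makes the statement a conjecture rather than a theorem; note in particular that neither $(\beta_{\vp})_{*}(W_{\vp})$ nor $W_{\rho}$ has any reason to be fibrant or cofibrant in the left-induced structures, so no off-the-shelf Quillen-functor argument applies, and in the chain-level examples cotensor products are homotopy invariant only against suitably resolved (cofree, Postnikov-type) arguments.

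That said, your reduction itself is correct and is a genuine sharpening of the problem. The adjunction identity $f^{\flat}=\epsilon_{(N,\xi)}\circ\gamma_{*}(f)$, the observation that $\gamma_{*}$ is the identity on underlying $A$-modules while weak equivalences in both comodule categories are created in $\cat{Mod}_{A}$, the fact that comultiplicativity of a co-ring map $\gamma$ makes $\gamma\colon\gamma_{*}(W)\to W'$ a morphism in $\cat M^{W'}_{A}$, and the identification of the counit with $N\underset{W_{\rho}}\square\,\beta_{\vp}$ (using $\epsilon_{\rho}\circ\beta_{\vp}=\epsilon_{can}$) are all sound, and together they show the conjecture is equivalent to the homotopy-invariance statement above. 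Your reduction also recovers the one case the paper does settle: when $\beta_{\vp}$ is an isomorphism the counit is an isomorphism and the Quillen equivalence is immediate. You are likewise right that only condition (1) of Definition \ref{defn:hg-homotopic} enters. So what you have written is a useful reformulation that isolates exactly where the difficulty lies, but it is not a proof, and the gap it leaves open coincides with the open part of the conjecture.
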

 
 \begin{rmk} If $\beta_{\vp}: A\underset B\otimes A\to A\otimes H$ is actually an isomorphism, then it follows from the proof of Lemma 2.3.5 in \cite{schauenburg} that $(\beta_{\vp})_{*}$ is an equivalence of categories.  The conjecture therefore holds for those homotopic Hopf-Galois extensions, like both of those treated in section \ref{subsec:examples},  such that $\beta_{\vp}$ is an isomorphism.
 \end{rmk}
 
 \begin{rmk}\label{rmk:tautology} It might be appropriate to render this conjecture a tautology, by replacing condition (1) in  the definition of homotopic Hopf-Galois extensions (Definition \ref{defn:hg-homotopic}) by the following condition.
 \begin{quote} ($1'$) The Galois functor
 $$(\beta_{\vp})_{*}: \cat D(\vp)\to \cat M_{A}^{W_{\rho}}$$
is a Quillen equivalence.\end{quote}
This modification would certainly be in the spirit of condition (2) in Definition \ref{defn:hg-homotopic}, which is also a category-level, rather than object-level, description.  Further experience with explict Hopf-Galois extensions should make it clear which is actually the ``correct'' definition of homotopic Hopf-Galois extensions.
\end{rmk}
 
 \begin{rmk} To prove this conjecture for arbitrary $\vp$, if we choose not to render it a tautology, it may be necessary to weaken slightly the definition of a descent datum and to work with ``homotopic descent data,'' rather than strict descent data.
 \end{rmk}
 
 We can now formulate and prove a homotopic version of Schneider's theorem, at least under the hypothesis that the conjecture above is true.
 
Recall the adjunction 
$$-\underset B\otimes A^{hco\, H}: \cat {Mod}_{B}\adjunct {}{} \cat {Mod}_{A^{hco\, H}}: i_{\vp}^*$$
from Definition \ref{defn:hg-homotopic}, which is a pair of Quillen equivalences if $\vp$ is a homotopic Hopf-Galois extension.

 \begin{thm} Assume Conventions \ref{conv:monmod} and \ref{conv:co-ring}. Let $H$ be a bimonoid in $\cat M$ such that the category $\cat{Alg}_{H}$ of $H$-comodule algebras admits a model structure with respect to which the coinvariants functor $\operatorname{Coinv}: \cat{Alg}_{H}\to  \cat{Alg}$ is a right Quillen functor. 

 Let $A$ be a $H$-comodule algebra with fibrant underlying object in $\cat M$.  Let $B$ be a monoid such that the functor $M\underset B\otimes -:_{B}\negthinspace\cat {Mod}\to \cat M$ commutes with equalizers up to weak equivalence.  Let $\varphi: \operatorname{Triv} (B)\to A$  be a morphism in $\cat {Alg}_{H}$ such that $A$ is cofibrant as a $B$-module and $M\underset B\otimes A$ is fibrant in $\cat M^{W_\rho}_A$ for all cofibrant $B$-modules $M$. 
 
 If Conjecture \ref{conj:descent} holds, then the following conditions are equivalent.
 \begin{enumerate}
 \item The monoid map $\vp$ is a homotopic $H$-Hopf-Galois extension, and $A$ is homotopically faithfully flat over $B$.
 \item The functor $ \operatorname{Ind}_{\rho}\circ (-\underset B\otimes A^{co\, H}):\cat {Mod}_{B}\to \cat M_{A}^{W_\rho}$ is a Quillen equivalence.
 \end{enumerate}
 \end{thm}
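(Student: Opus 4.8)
The plan is to exploit a factorization of the functor in (2) through the descent category $\cat D(\vp)$, reducing the statement to a chain of Quillen equivalences. First I would establish a natural isomorphism of functors
$$\operatorname{Ind}_{\rho}\circ(-\underset B\otimes A^{co\, H})\;\cong\;(\beta_{\vp})_{*}\circ\operatorname{Can}\colon \cat{Mod}_{B}\to \cat M^{W_{\rho}}_{A}.$$
On underlying $A$-modules both sides send $M$ to $M\underset B\otimes A$, since $(M\underset B\otimes A^{co\, H})\underset{A^{co\, H}}\otimes A\cong M\underset B\otimes A$ by associativity of the relative tensor products together with $A^{co\, H}\underset{A^{co\, H}}\otimes A\cong A$; the remaining point is that the two resulting $W_{\rho}$-coactions agree, which holds because $\beta_{\vp}$ is built out of $\rho$ and the multiplication of $A$, exactly the data defining $\operatorname{Ind}_{\rho}$. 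The hypotheses that the underlying object of $A$ is fibrant in $\cat M$ and that $M\underset B\otimes-$ commutes with equalizers up to weak equivalence guarantee that the strict coinvariants $A^{co\, H}$ model $A^{hco\, H}$ and that $\operatorname{Can}$, $\operatorname{Ind}_{\rho}$ and $-\underset B\otimes A^{co\, H}$ compute their total left-derived functors on the objects in play; consequently the strict isomorphism above descends to a natural weak equivalence of derived functors, and condition (2) of Definition \ref{defn:hg-homotopic} may be read with $A^{co\, H}$ in place of $A^{hco\, H}$.

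The second ingredient is a short dictionary of Quillen-pair facts together with the $2$-out-of-$3$ property for Quillen equivalences. By definition $A$ is homotopically faithfully flat over $B$ precisely when $\operatorname{Can}$ is a Quillen equivalence; the Remark following Theorem \ref{thm:co-ring} makes $(\beta_{\vp})_{*}$ a Quillen pair, which Conjecture \ref{conj:descent} upgrades to a Quillen equivalence whenever $\vp$ is a homotopic Hopf-Galois extension; and the cofibrancy of $A$ as a $B$-module together with the fibrancy of $M\underset B\otimes A$ in $\cat M^{W_{\rho}}_{A}$ ensures that the functor in (2) carries cofibrant $B$-modules to objects that are both cofibrant and fibrant in $\cat M^{W_{\rho}}_{A}$, so that its derived unit and counit are computed by the strict constructions. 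Finally, since a Quillen pair is a Quillen equivalence exactly when its total left-derived functor is an equivalence of homotopy categories, and equivalences of categories satisfy $2$-out-of-$3$, the same property holds for Quillen equivalences along composable left Quillen functors.

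With these in hand the implication $(1)\Rightarrow(2)$ is immediate: homotopic faithful flatness gives that $\operatorname{Can}$ is a Quillen equivalence, the homotopic Hopf-Galois hypothesis gives via Conjecture \ref{conj:descent} that $(\beta_{\vp})_{*}$ is a Quillen equivalence, and the factorization exhibits the functor in (2) as their composite, hence a Quillen equivalence. For $(2)\Rightarrow(1)$ I would first show that $\vp$ is a homotopic Hopf-Galois extension. Condition (2) of Definition \ref{defn:hg-homotopic} is recovered by applying $2$-out-of-$3$ to the factorization $\operatorname{Ind}_{\rho}\circ(-\underset B\otimes A^{co\, H})$ once $\operatorname{Ind}_{\rho}$ is known to be a Quillen equivalence; and the latter, like condition (1), amounts to $\beta_{\vp}$ being a weak equivalence, which I would extract by evaluating the derived equivalence on the free $B$-module $B$, where the functor takes the value $(A,\rho)$, and on $A$ itself, where it takes the value $A\underset B\otimes A$ mapping to $W_{\rho}=A\otimes H$ through $\beta_{\vp}$. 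Once $\vp$ is homotopic Hopf-Galois, Conjecture \ref{conj:descent} makes $(\beta_{\vp})_{*}$ a Quillen equivalence, and a final application of $2$-out-of-$3$ to the composite $\Phi=(\beta_{\vp})_{*}\circ\operatorname{Can}$ forces $\operatorname{Can}$ to be a Quillen equivalence, i.e.\ $A$ to be homotopically faithfully flat over $B$.

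The main obstacle is precisely this decoupling in the backward direction: knowing only that the composite $\Phi=(\beta_{\vp})_{*}\circ\operatorname{Can}$ is a Quillen equivalence does not by itself constrain either factor, so one must produce the weak equivalence $\beta_{\vp}$ independently. Conjecture \ref{conj:descent} supplies only the direction \emph{homotopic Hopf-Galois} $\Rightarrow$ $(\beta_{\vp})_{*}$ \emph{a Quillen equivalence}; closing the argument therefore requires either strengthening it to a biconditional (that $(\beta_{\vp})_{*}$ being a Quillen equivalence forces $\beta_{\vp}$ to be a weak equivalence, in the spirit of the Remark that $\beta_{\vp}$ an isomorphism makes $(\beta_{\vp})_{*}$ an equivalence of categories) or carrying out the direct test-object computation above to certify $\beta_{\vp}$ by hand. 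Verifying that these test-object evaluations really detect the weak equivalence class of $\beta_{\vp}$, under the standing flatness and fibrancy hypotheses, is where the substantive work lies.
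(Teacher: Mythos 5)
Your forward implication and the factorization it rests on are essentially the paper's own argument: the paper records the commuting triangle on the right-adjoint side, $\operatorname{Coinv}=(i_{\vp}^*\circ(-)^{co\,\rho})\circ(\beta_{\vp})_{*}$ on $\cat D(\vp)$, where you record its left-adjoint counterpart $(\beta_{\vp})_{*}\circ\operatorname{Can}\cong\operatorname{Ind}_{\rho}\circ(-\underset B\otimes A^{co\, H})$, and $(1)\Rightarrow(2)$ follows either way by composing Quillen equivalences. The genuine gap is in $(2)\Rightarrow(1)$, and you name it yourself without closing it. Worse, the route you sketch for closing it would fail: you propose to first show that $\operatorname{Ind}_{\rho}$ is a Quillen equivalence on the grounds that this ``amounts to $\beta_{\vp}$ being a weak equivalence,'' and then apply two-out-of-three to the factorization. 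That claim is unjustified, and it contradicts the shape of the classical Schneider theorem quoted in the paper: there, $\operatorname{Ind}_{\rho}$ being an equivalence is equivalent to the \emph{conjunction} of the Hopf--Galois condition and faithful flatness, so it cannot be a reformulation of the Galois condition alone. No circularity-free argument along these lines is available, and the ``test-object computation'' you defer to future work is precisely where the proof lives.

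Here is how the paper closes it, using only hypothesis (2) and the standing fibrancy/cofibrancy assumptions. Since the composite in (2) is a Quillen equivalence and $M\underset B\otimes A$ is fibrant in $\cat M^{W_\rho}_{A}$ for cofibrant $M$, the unit $\eta_{M}\colon M\to i_{\vp}^*(M\underset B\otimes A, M\underset B\otimes \rho)^{co\,\rho}$ is a weak equivalence for every cofibrant $B$-module $M$; in particular $\eta_{A}$ is one, as $A$ is cofibrant over $B$. Now $\beta_{\vp}$ is a morphism of $A$-co-rings, hence a morphism in $\cat M^{W_\rho}_{A}$ from $(A\underset B\otimes A, A\underset B\otimes\rho)$ to $(A\otimes H, A\otimes\Delta)$, and the universal property of the equalizer gives the triangle identity $i_{\vp}^*(\beta_{\vp})^{co\,\rho}\circ\eta_{A}=\operatorname{id}_{A}$, using $(A\otimes H, A\otimes\Delta)^{co\,\rho}\cong A$. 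Hence $i_{\vp}^*(\beta_{\vp})^{co\,\rho}$ is a weak equivalence; the fibrancy of $A$ in $\cat M$ ensures that this map computes $\mathbb R(i_{\vp}^*\circ(-)^{co\,\rho})(\beta_{\vp})$, and since this derived functor is an equivalence of homotopy categories (by hypothesis (2)), it reflects isomorphisms, so $\beta_{\vp}$ is a weak equivalence --- condition (1) of Definition \ref{defn:hg-homotopic}, obtained with no reference to $\operatorname{Ind}_{\rho}$ separately. Condition (2) of that definition is then obtained not by two-out-of-three but by factoring the unit as $\eta_{M}=i_{\vp}^*u_{M}\circ i_{M}$, where $u_{M}\colon M\underset B\otimes A^{co\, H}\to (M\underset B\otimes A, M\underset B\otimes\rho)^{co\,\rho}$ is a weak equivalence precisely because $M\underset B\otimes-$ commutes with equalizers up to weak equivalence; this forces $i_{M}$ to be a weak equivalence for all cofibrant $M$, i.e., $i_{\vp}^*$ is a Quillen equivalence. (Note that the equalizer hypothesis is used for this unit factorization, not merely to identify strict with derived coinvariants as in your first paragraph.) Only now, with $\vp$ established as a homotopic Hopf--Galois extension, does Conjecture \ref{conj:descent} yield that $(\beta_{\vp})_{*}$ is a Quillen equivalence, and the commuting triangle then forces $\operatorname{Coinv}$ (equivalently $\operatorname{Can}$) to be one as well: homotopic faithful flatness is the \emph{last} conclusion drawn, never an input to the backward direction.
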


\begin{proof} Our proof is inspired by the proof of Corollary 2.3.6 in \cite {schauenburg}.  We begin by observing that the following diagram of functors clearly commutes.
\begin{equation}\label{eqn:diagram} \xymatrix{\cat D(\vp)\ar [rr]^{(\beta_{\vp})_{*}}\ar[dr]_{\operatorname{Coinv}}&&\cat M^{W_\rho}_A\ar [dl]^{i_{\vp}^*\circ (-)^{co\,\rho}}\\ &\cat {Mod}_{B}}\end{equation}

$(1)\Rightarrow (2):$   Conjecture \ref{conj:descent} implies that $(\beta_{\vp})_{*}$ is a Quillen equivalence.  On the other hand, by definition of homotopic faithful flatness, $\operatorname{Coinv}$ is a Quillen equivalence.  We conclude from  diagram (\ref{eqn:diagram}) that $i_{\vp}^*\circ  (-)^{co\,\rho}$  is a Quillen equivalence, which implies that its left adjoint,   $\operatorname{Ind}_{\rho}\circ (-\underset B\otimes A^{co\, H})$, is also a Quillen equivalence.

$(2)\Rightarrow (1):$ The hypothesis that $-\underset B\otimes A$ is a Quillen equivalence implies that the unit of the adjunction
$$\eta_{M}: M\to i_{\vp}^*(M\underset B\otimes A, M\underset B\otimes \rho)^{co\,\rho}$$
is a weak equivalence in $\cat {Mod}_{B}$ for all cofibrant $B$-modules $M$, since  $M\underset B\otimes A$ is fibrant in $\cat M^{W_\rho}_A$ by hypothesis. In particular, since  $A$ is supposed to be cofibrant as a $B$-module, 
$$\eta_{A}: A\xrightarrow \sim i_{\vp}^*(A\underset B\otimes A, A\underset B\otimes \rho)^{co\,\rho}$$
is a weak equivalence.  

Observe that since $\beta_{\vp}$ is a morphism of co-rings, it can also be viewed as a morphism in $\cat M^{W_{\rho}}_{A}$.  Moreover,  the following diagram commutes, thanks to the universal property of the equalizer.
$$\xymatrix{A\ar[r]^(0.35){\eta_{A}}\ar [dr]_{=}&i_{\vp}^*(A\underset B\otimes A, A\underset B\otimes \rho)^{co\,\rho}\ar [d]^{i_{\vp}^*(\beta_{\vp})^{co\,\rho}}\\
&i_{\vp}^*(A\otimes H, A\otimes \Delta)^{co\,\rho}\cong A}$$
Thus, $i_{\vp}^*(\beta_{\vp})^{co\,\rho}$ is a weak equivalence.  Moreover, the fibrancy hypothesis on $A$ implies that $\mathbb R(i_{\vp}^*\circ (-)^{co\,\rho})(\beta_{\vp})=i_{\vp}^*(\beta_{\vp})^{co\,\rho}$, whence $\beta_{\vp}$ must also be a weak equivalence, since $\mathbb R(i_{\vp}^{*}\circ(-)^{co\,\rho})$ is an equivalence of categories. 

To conclude that $\vp$ is a homotopic Hopf-Galois extension, observe that for any cofibrant $B$-module $M$, the unit map $\eta_{M}$ is equal to the composite
$$M\xrightarrow{i_{M}} i_{\vp}^*(M\underset B\otimes A^{co\, H})\xrightarrow {i_{\vp}^*u_{M}} i_{\vp}^*(M\underset B\otimes A, M\underset B\otimes \rho)^{co\,\rho},$$
where $u_{M}:M\underset B\otimes A^{co\, H} \to (M\underset B\otimes A, M\underset B\otimes \rho)^{co\,\rho}$ is the morphism of $A^{co \, H}$-modules  induced by the natural map $u:A^{co \, H}\to A$.  Since $M\underset B\otimes -$ commutes with equalizers up to weak equivalence, $u_{M}$ and therefore also $i_{\vp }^*u_{M}$ are weak equivalences.  Consequently, $i_{M}$ is a weak equivalence for all cofibrant $M$ and therefore $i_{\vp }^*$ is a Quillen equivalence.

Since $\vp $ is  a homotopic Hopf-Galois extension, it follows from Conjecture \ref{conj:descent} that $(\beta_{\vp})_{*}$ is a Quillen equivalence.  The commuting diagram (\ref{eqn:diagram}) then implies that $\operatorname{Coinv}$ is a Quillen equivalence, i.e., that $\vp$ is homotopically faithfully flat.
\end{proof}

\begin{rmk} Note that it follows from the proof above that, under the various cofibrancy and fibrancy conditions, $\vp $ is a homotopic Hopf-Galois extension whenever $-\underset B\otimes A$ is a  Quillen equivalence, without any need of Conjecture \ref{conj:descent}, which serves only to make the connection with homotopical faithful flatness.
\end{rmk}

\section{Appendix: model categories and derived functors}

\subsection{Definitions and terminology}

We recall here certain elements of the theory of model categories, primarily to fix notation and terminology.

\begin{defn}   A \emph{model category} consists of a 
category $\cat M$, together with classes of morphisms $\mathsf{WE}, \mathsf{Fib}, \mathsf{Cof}\subset\mor 
\cat M$ that are closed under composition and contain all identities,
such that the following axioms are satisfied.  
\begin{enumerate} 
\item [(M1)]
All finite limits and colimits in $\cat M$ exist. 
\smallskip
 
\item [(M2)] Let $\xymatrix@1{f:A\ar [r]&B}$ and
$\xymatrix@1{g:B\ar [r]&C}$ be morphisms in $\cat M$.  If two of $f$, $g$, and $gf$ are
in $\mathsf{WE}$, then so is the third. 
\smallskip
  
\item [(M3)] The classes $\mathsf{WE}$, $\mathsf{Fib}$, and $\mathsf{Cof}$
 are all closed under taking retracts.  
\smallskip
 
\item [(M4)] Given a commuting diagram in $\cat M$
$$\xymatrix{ A\ar[r]^f\ar [d]^{i} & E\ar[d]^p\\ X\ar[r]^g& B,}$$
there is a morphism $h:X\to E$ such that $ph =g$ and $ hi=f$ if
\begin{enumerate}
\item [(a)] $i\in \mathsf{Cof}$ and  $p\in \mathsf{Fib}\cap \mathsf{WE}$, or
\item [(b)] $i\in\mathsf{Cof} \cap \mathsf{WE}$ and $p\in \mathsf{Fib}$.  
\end{enumerate}
\smallskip
 
\item [(M5)] If $f\in \mor \cat M$,
then there exist 
\begin{enumerate}
\item [(a)] $i\in \mathsf{Cof} $ and $p\in \mathsf{Fib}\cap \mathsf{WE}$ such
that $f=pi$; 
\item [(b)]$j\in \mathsf{Cof}\cap \mathsf{WE}$ and $ q\in \mathsf{Fib}$ such that
$f=qj$. 
\end{enumerate} 
\end{enumerate} 

The \emph{homotopy category} of a model category $\cat M$, denoted $\operatorname{Ho} \cat M$, is the localization of $\cat M$ with respect to $\mathsf{WE}$.
\end{defn}

By analogy with the homotopy structure in the category of topological 
spaces, the morphisms belonging to the classes $\mathsf{WE}$, $\mathsf{Fib}$ and $\mathsf{Cof}$ are 
called \emph{weak equivalences}, \emph{fibrations}, and \emph{ 
cofibrations} and are denoted by decorated 
arrows $\xymatrix@1{{}\we&}$, $\xymatrix@1{{}\fib&}$, and
 $\xymatrix@1{{}\cof&}$.  The elements of the classes $\mathsf{Fib}\cap \mathsf{WE}$ and
$\mathsf{Cof}\cap \mathsf{WE}$ are called, respectively, \emph{ acyclic fibrations} and
\emph{ acyclic cofibrations}.  Since $\mathsf{WE}$, $\mathsf{Fib}$ and $\mathsf{Cof}$ are all
closed under composition and contain all isomorphisms, we can and
sometimes do view them as subcategories of $\cat M$, rather than
simply as classes of morphisms.

Axiom (M1) implies that any model category has an initial object $\phi$ and a 
terminal object $e$.  An object $A$ in a model category is \emph{ 
cofibrant} if the unique morphism $\xymatrix@1{\phi \ar[r]&A}$ is a cofibration.  
Similarly, $A$ is \emph{ fibrant} if the unique morphism $\xymatrix@1{A\ar [r]&e}$ is a 
fibration.

When definining homotopy coinvariants, we need the following notion.

\begin {defn}  Let $\cat M$ and $\cat M'$ be model categories.  A pair of adjoint functors $F:\cat M\adjunct {}{} \cat M':G$ is a \emph{Quillen pair} if $F$ preserves cofibrations and $G$ preserves fibrations.  
\end{defn}

\begin{rmk} As is well known \cite {hovey}, $(F,G)$ is a Quillen pair if and only if $F$ preserves both cofibrations and acyclic cofibrations, which is true if and only if $G$ preserves fibrations and acyclic fibrations.
\end{rmk}  

\begin{prop}  A Quillen pair $F:\cat M\adjunct {}{} \cat M':G$ induces a pair of adjoint functors 
$$ \mathbb L F:\operatorname{Ho} \cat M \adjunct {}{} \operatorname{Ho} \cat M':\mathbb RG.$$
\end{prop}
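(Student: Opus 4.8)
The plan is to construct the two derived functors explicitly by means of (co)fibrant replacement and then to obtain the adjunction by transporting the given adjunction $\cat M'(FX,Y)\cong\cat M(X,GY)$ through these replacements, descending the resulting hom-set bijection to the homotopy categories. First I would fix a cofibrant replacement functor $Q$ on $\cat M$, with natural weak equivalence $QX\we X$ and $QX$ cofibrant, together with a fibrant replacement functor $R$ on $\cat M'$, with natural weak equivalence $Y\we RY$ and $RY$ fibrant, and set $\mathbb L F(X)=F(QX)$ and $\mathbb R G(Y)=G(RY)$. That these descend to well-defined functors on the homotopy categories is the content of Ken Brown's lemma: a left Quillen functor carries weak equivalences between cofibrant objects to weak equivalences, and a right Quillen functor carries weak equivalences between fibrant objects to weak equivalences (cf. \cite{hovey}). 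Hence $F\circ Q$ and $G\circ R$ send weak equivalences to weak equivalences and so factor through the localizations, yielding $\mathbb L F:\operatorname{Ho}\cat M\to\operatorname{Ho}\cat M'$ and $\mathbb R G:\operatorname{Ho}\cat M'\to\operatorname{Ho}\cat M$.

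The core of the argument is the chain of natural bijections
$$\operatorname{Ho}\cat M'(\mathbb L F\,X,\,Y)\cong [FQX,RY]\cong[QX,GRY]\cong\operatorname{Ho}\cat M(X,\,\mathbb R G\,Y),$$
where $[-,-]$ denotes homotopy classes of morphisms. For the two outer isomorphisms I would use that $F$, being a left adjoint, preserves the initial object and cofibrations, so $FQX$ is cofibrant, and dually that $GRY$ is fibrant; then morphisms in the homotopy category out of a cofibrant object (resp. into a fibrant object) are computed, after fibrant (resp. cofibrant) replacement of the remaining variable, as genuine homotopy classes of maps between a cofibrant source and a fibrant target. The middle isomorphism is obtained by restricting the adjunction bijection $\cat M'(FQX,RY)\cong\cat M(QX,GRY)$ to homotopy classes.

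The main obstacle is precisely to show that this adjunction bijection respects the homotopy relation, and hence descends to $[-,-]$. Because $FQX$ and $QX$ are cofibrant and $RY$ and $GRY$ are fibrant, left and right homotopy coincide on both sides and each is an equivalence relation, so it suffices to transport homotopies across the adjunction. Choosing a path object for $RY$, I would note that $G$ sends it to a path object for $GRY$, since $G$ preserves fibrations and finite products (being a right adjoint); thus a right homotopy $FQX\to\operatorname{Path}(RY)$ transposes to a right homotopy $QX\to G(\operatorname{Path}(RY))$ between the adjoint maps, so the forward bijection carries homotopic maps to homotopic maps. Dually, using that $F$ preserves cylinder objects (cofibrations and coproducts), the inverse bijection carries left homotopies to left homotopies. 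Consequently the bijection is well-defined on homotopy classes, and naturality in both variables follows from naturality of the replacement functors and of the original adjunction. The standard characterization of adjoint functors via a natural isomorphism of hom-functors then yields the desired adjoint pair $(\mathbb L F,\mathbb R G)$.
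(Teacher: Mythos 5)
Your proof is correct and is the standard argument for Quillen's classical theorem; the paper in fact gives no proof at all (the proposition is recalled as background in the appendix), and its subsequent remark describes exactly your construction, with $\mathbb L F(X)$ represented by $F(QX)$ and $\mathbb R G(X')$ by $G(RX')$, so your write-up simply supplies the details the paper leaves to the literature. Two small points to tighten: the paper's axiom (M5) only asserts that factorizations exist, not that they are functorial, so either invoke the convention of \cite{hovey} (where factorizations are functorial) or make object-wise choices of replacements and check independence up to canonical isomorphism; and when you argue that $G$ carries a path object $RY\to P\to RY\times RY$ to a path object for $GRY$, preservation of fibrations and products only handles the second leg --- the fact that $GRY\to GP$ is a weak equivalence requires Ken Brown's lemma applied to the weak equivalence $RY\to P$ between fibrant objects, which you have available but should cite at that step.
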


\begin{rmk} For any objects $X$ of $\cat M$ and $X'$ of $\cat M'$,  $\mathbb LF(X)$ is represented by $F(QX)$ and $\mathbb RG(X')$ by $G(RX')$, where $\xymatrix@1{\emptyset\cof &QX\wefib &X}$ is a cofibrant replacement of $X$ and $\xymatrix@1{X'\wecof &RX'\fib &e}$ is a fibrant replacement of $X'$.
\end{rmk}

\begin {defn}  Let $\cat M$ and $\cat M'$ be model categories.  A Quillen pair 
$$F:\cat M\adjunct {}{} \cat M':G$$ 
is a \emph{Quillen equivalence} if for every cofibrant object $X$ in $\cat M$ and every fibrant object $X'$ in $\cat M'$, a morphism $f:X\to GX'$ is a weak equivalence in $\cat M$ if and only if its transpose $f^\flat :FX\to X'$ is a weak equivalence in $\cat M'$.  It follows that $(\mathbb LF, \mathbb RG)$ is an equivalence of categories.  
\end{defn}

\subsection{Induced model structures}\label{subsec:induced}

A common way of creating model structures is by transfer across adjunctions.  We need in this article to use both right-to-left and left-to-right transfer, as specified in the following definition.

\begin{defn}\label{defn:induction} Let $G: \cat C \to \cat M$ be a functor, where $\cat M$ is a model category. A model structure on $\cat C$ is \emph{right-induced} from $\cat M$ if $\mathsf {WE}_{\cat C}=G^{-1}(\mathsf {WE}_{\cat M})$ and $\mathsf {Fib}_{\cat C}=G^{-1}(\mathsf {Fib}_{\cat M})$.

Let $F: \cat C \to \cat M$ be a functor, where $\cat M$ is a model category. A model structure on $\cat C$ is \emph{left-induced} from $\cat M$ if $\mathsf {WE}_{\cat C}=F^{-1}(\mathsf {WE}_{\cat M})$ and $\mathsf {Cof}_{\cat C}=F^{-1}(\mathsf {Cof}_{\cat M})$.
\end{defn}

\begin{rmk} In general, functors to model categories do not induce model structures on their sources.   If, however, $\cat M$ is cofibrantly generated, and $G:\cat C\to \cat M$ admits a left adjoint $F$, then there are well-known conditions on $F$ and $G$ and their relation to the generating (acyclic) cofibrations in $\cat M$ that ensure the existence of a right-induced model structure on $\cat C$ (cf., e.g., Theorem 11.3.2 in \cite{hirschhorn}).  Left induction is less well understood, probably because fibrantly generated model categories are rare.
\end{rmk}

The next theorem is key to determining conditions under which left-induced model structures exist.  Before stating the theorem, we introduce a bit of useful notation.

\begin{notn}Let $\mathsf X$ be any subset of morphisms in a category $\cat C$. 
\begin{enumerate}
\item The closure of $\mathsf X$ under formation of retracts is denoted $\widehat{\mathsf X}$, i.e., 
$$f\in \widehat{\mathsf X}\Longleftrightarrow  \exists\; g\in \mathsf X \text{ such that $f$ is a retract of $g$} .$$
\item The set of morphisms with the right lifting property with respect to $\mathsf X$ is denoted $\mathsf{RLP(X)}$.  In other words,  a morphism $p:E\to B$  is in $\mathsf {RLP(X)}$ if for any commuting diagram in $\cat C$
$$\xymatrix{ A\ar[r]^f\ar [d]^{i} & E\ar[d]^p\\ X\ar[r]^g& B,}$$
where $i\in \mathsf{X}$, there is a morphism $h:X\to E$ such that $ph =g$ and $ hi=f$.
\end{enumerate}
\end{notn}

\begin{rmk}\label{rmk:retract}  Note that if $\mathsf Y\subset \mathsf {RLP(X)}$, then $\widehat{\mathsf Y}\subset \mathsf {RLP(X)}$.  Furthermore, $\mathsf {RLP(X)}$ is clearly closed under pullback and inverse limits.  Finally, recall that in any model category $\mathsf{Fib}=\mathsf{RLP(Cof\cap WE)}$ and $\mathsf{Fib\cap WE}=\mathsf{RLP(Cof)}$.
\end{rmk}

\begin{thm}\label{thm:left-ind}  Let $\cat C$ be a finitely bicomplete category,  and let $\mathsf{W}, \mathsf {C}, \mathsf{P}, \mathsf{Q}$ be subsets of morphisms in $\cat C$ that are closed under composition, contain all identities and satisfy the following conditions.
\begin{enumerate}
\item Let $\xymatrix@1{f:A\ar [r]&\,B}$ and
$\xymatrix@1{g:B\ar [r]&\,C}$ be morphisms in $\cat M$.  If two of $f$, $g$, and $gf$ are
in $\mathsf{W}$, then so is the third. 
\smallskip
\item $\widehat {\mathsf W}=\mathsf W$ and $\widehat {\mathsf C}=\mathsf C$.
\smallskip
\item 
\begin{enumerate}
\item [(a)] $\mathsf P\subset \mathsf{RLP(C)}$. 
\item [(b)] $\mathsf Q\subset \mathsf{RLP(C\cap W)}$.
\end{enumerate}
\smallskip
\item  If $f\in \mor \cat C$,
then there exist 
\begin{enumerate}
\item [(a)] $i\in \mathsf{C} $ and $p\in \mathsf{P}$ such
that $f=pi$; 
\item [(b)]$j\in \mathsf{C}\cap \mathsf{W}$ and $q\in \mathsf{Q}$ such that
$f=qj$. 
\end{enumerate} 
\item $\mathsf P\subset \mathsf W$.
\end{enumerate}
Then $\mathsf W$, $\mathsf C$  and $\widehat{\mathsf Q}$ are the weak equivalences, cofibrations and fibrations in a model category structure on $\cat C$.
\end{thm}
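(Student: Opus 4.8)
The plan is to verify the model-category axioms (M1)--(M5) for the triple $(\mathsf W, \mathsf C, \widehat{\mathsf Q})$, reducing everything to two lifting characterizations. Axiom (M1) is the hypothesis that $\cat C$ is finitely bicomplete, (M2) is condition (1), and (M3) holds because $\mathsf W=\widehat{\mathsf W}$ and $\mathsf C=\widehat{\mathsf C}$ by condition (2) while $\widehat{\mathsf Q}$ is retract-closed by construction. The substance of the theorem lies in (M4) and (M5), and my strategy is to first identify the fibrations and the acyclic fibrations as lifting classes, after which (M4) is immediate and (M5) follows from the factorizations supplied by condition (4). The one technical tool I use throughout is the \emph{retract argument}: if $f=pi$ and $f$ lies in $\mathsf{RLP}(\{i\})$, then $f$ is a retract of $p$.

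The key step is to prove the two identities
$$\widehat{\mathsf Q}=\mathsf{RLP}(\mathsf C\cap\mathsf W) \qquad\text{and}\qquad \mathsf{RLP}(\mathsf C)=\widehat{\mathsf Q}\cap\mathsf W.$$
For the first, the inclusion $\widehat{\mathsf Q}\subseteq\mathsf{RLP}(\mathsf C\cap\mathsf W)$ comes from condition (3)(b) together with the retract-stability of right-lifting classes (Remark \ref{rmk:retract}); conversely, given $p\in\mathsf{RLP}(\mathsf C\cap\mathsf W)$, I would factor $p=qj$ as in (4)(b) with $j\in\mathsf C\cap\mathsf W$ and $q\in\mathsf Q$, and then the retract argument (using that $p$ lifts against $j$) exhibits $p$ as a retract of $q\in\mathsf Q$, whence $p\in\widehat{\mathsf Q}$. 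This identity also shows that $\widehat{\mathsf Q}$ is closed under composition and contains the identities, as any right-lifting class is, which is needed for $\widehat{\mathsf Q}$ to qualify as the class of fibrations.

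The delicate half is the inclusion $\widehat{\mathsf Q}\cap\mathsf W\subseteq\mathsf{RLP}(\mathsf C)$, and I expect this to be the main obstacle. Starting from an acyclic fibration $p\in\widehat{\mathsf Q}\cap\mathsf W$, I would factor $p=p'i$ as in (4)(a) with $i\in\mathsf C$ and $p'\in\mathsf P$; condition (5) gives $p'\in\mathsf W$, so two-out-of-three (condition (1)) forces $i\in\mathsf C\cap\mathsf W$. Because $p\in\widehat{\mathsf Q}=\mathsf{RLP}(\mathsf C\cap\mathsf W)$, it lifts against $i$, so the retract argument presents $p$ as a retract of $p'$; finally $p'\in\mathsf P\subseteq\mathsf{RLP}(\mathsf C)$ by (3)(a), and since $\mathsf{RLP}(\mathsf C)$ is retract-closed we conclude $p\in\mathsf{RLP}(\mathsf C)$. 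The reverse inclusion $\mathsf{RLP}(\mathsf C)\subseteq\widehat{\mathsf Q}\cap\mathsf W$ is easier: membership in $\widehat{\mathsf Q}$ follows since $\mathsf{RLP}(\mathsf C)\subseteq\mathsf{RLP}(\mathsf C\cap\mathsf W)=\widehat{\mathsf Q}$, and a single retract argument against the factorization (4)(a) realizes any $p\in\mathsf{RLP}(\mathsf C)$ as a retract of some $p'\in\mathsf P\subseteq\mathsf W$, giving $p\in\mathsf W$.

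With both identities in hand, the remaining axioms fall out. Axiom (M4)(b) holds because fibrations are exactly $\mathsf{RLP}(\mathsf C\cap\mathsf W)$ and hence lift against acyclic cofibrations, and (M4)(a) holds because the acyclic fibrations are exactly $\mathsf{RLP}(\mathsf C)$ and hence lift against cofibrations. For (M5), the factorization (4)(a) writes any $f$ as $p'i$ with $i\in\mathsf C$ and $p'\in\mathsf P\subseteq\mathsf{RLP}(\mathsf C)=\widehat{\mathsf Q}\cap\mathsf W$, an acyclic fibration, while (4)(b) writes $f=qj$ with $j\in\mathsf C\cap\mathsf W$ and $q\in\mathsf Q\subseteq\widehat{\mathsf Q}$ a fibration. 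This completes the verification.
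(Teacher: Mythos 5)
Your proof is correct and follows essentially the same route as the paper: both establish the lifting characterizations of the fibrations and acyclic fibrations via the retract argument applied to the factorizations of hypothesis (4), and then read off (M4) and (M5). The only cosmetic difference is that the paper names the acyclic fibrations as $\widehat{\mathsf P}$ and proves $\widehat{\mathsf Q}\cap\mathsf W=\widehat{\mathsf P}$ (with $\mathsf{RLP}(\mathsf C)=\widehat{\mathsf P}$ as an intermediate step), whereas you state the same fact directly as $\mathsf{RLP}(\mathsf C)=\widehat{\mathsf Q}\cap\mathsf W$; the underlying arguments and the use of each hypothesis are identical.
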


\begin{proof}  Axioms (M1), (M2), (M3) and (M5)(b) are satisfied simply by hypothesis if $\mathsf W$, $\mathsf C$  and $\widehat{\mathsf Q}$ are the weak equivalences, cofibrations and fibrations we consider.  Axiom (M4)(b) follows easily from hypothesis (3)(b), by Remark \ref{rmk:retract}.

To conclude, we show that 
$$\widehat{\mathsf Q}\cap \mathsf W=\widehat {\mathsf P},$$
which, together with (3)(a) above, implies (M4)(a)  and, together with (4)(a) above, implies (M5)(a).  

Let $(g:E\to B)\in \mathsf{RLP(C\cap W)}$.  By hypothesis (4)(b), there exist $(j:E\to E')\in \mathsf {C\cap W}$  and  $(q:E'\to B)\in \mathsf Q$ such that $g=qj$. There is thus a commutative diagram
$$\xymatrix{ E\ar[r]^=\ar [d]^{j} & E\ar[d]^g\\ E'\ar[r]^q& B}$$
where $j\in \mathsf {C\cap W}$ and $g\in \mathsf{RLP(C\cap W)}$.  It follows that there exists $r:E'\to E$ such that $gr=q$ and $rj=Id_{E}$, and therefore the diagram
$$\xymatrix{E\ar [d]_{g}\ar [r]^j&E'\ar [d]^q\ar[r]^r&E\ar [d]^g\\ B\ar [r]^=&B\ar [r]^=&B}$$
commutes, i.e., $g$ is a retract of $q$.  

We have thus established that $ \mathsf{RLP(C\cap W)}\subset \widehat{\mathsf Q}$, which, together with hypothesis (3)(b) and Remark \ref{rmk:retract}, implies that 
$$\mathsf{RLP(C\cap W)}=\widehat{\mathsf Q}.$$
A similar argument, applying hypotheses (4)(a) and (3)(a) and Remark \ref{rmk:retract}, shows that 
$$\mathsf{RLP(C)}= \widehat{\mathsf P}$$
and therefore that 
$$\widehat{\mathsf P}\subset \widehat{\mathsf Q}.$$
Conditions (2) and (5) then imply that
$$\widehat{\mathsf P}\subset \widehat{\mathsf Q} \cap \mathsf W.$$

Let $(q:E\xrightarrow \sim B)\in \widehat{\mathsf Q} \cap \mathsf W$.  By hypothesis (4)(a), there exist $(i:E\to B')\in \mathsf {C}$  and  $(p:B'\to B)\in \mathsf P$ such that $q=pi$.  By hypotheses (1) and (5), $i\in \mathsf W$.
There is thus a commutative diagram
$$\xymatrix{ E\ar[r]^=\ar [d]^{i} & E\ar[d]^q\\ B'\ar[r]^p& B}$$
where $i\in \mathsf {C\cap W}$ and $q\in \widehat{\mathsf Q} \cap \mathsf W \subset \mathsf{RLP(C\cap W)}$.  It follows that there exists $r:B'\to E$ such that $qr=p$ and $ri=Id_{E}$, and therefore the diagram
$$\xymatrix{E\ar [d]_{q}\ar [r]^i&B'\ar [d]^p\ar[r]^r&E\ar [d]^q\\ B\ar [r]^=&B\ar [r]^=&B}$$
commutes, i.e., $q$ is a retract of $p$.   Thus, $ \widehat{\mathsf Q} \cap \mathsf W\subset \widehat{\mathsf P}$, and we can conclude.
\end{proof}

We apply Theorem \ref{thm:left-ind} to proving the existence of left-induced model structures, where the set of fibrations is generated by the particular type of morphism described in the definition below.

\begin{defn} \label{defn:postnikov} Let $\mathsf X$ be a set of morphisms in a category $\cat C$ that is closed under pullbacks.  If $\lambda$ is an ordinal and  $Y:\lambda ^{op}\to \cat C$ is a functor such that for all $\beta <\lambda$, the morphism $Y_{\beta +1}\to Y_{\beta}$ fits into a pullback
$$\xymatrix{Y_{\beta +1}\ar [d]_{}\ar [r]^{}&X_{\beta +1}\ar [d]^{x_{\beta+1}}\\ Y_{\beta}\ar [r]^{k_{\beta}}&X_{\beta}}$$
for some $x_{\beta +1}: X_{\beta +1}\to X_{\beta}$ in $\mathsf X$ and  $k_{\beta}:Y_{\beta}\to X_{\beta}$ in $\cat C$, while
$Y_{\gamma}:=\lim _{\beta<\gamma}Y_{\beta}$ for all limit ordinals $\gamma<\lambda$,
then the composition of the tower $Y$
$$\lim_{\lambda^{op}}Y_{\beta}\to Y_{0},$$
\emph{if it exists}, is an \emph{$\mathsf X$-Postnikov tower}.  The set of all $\mathsf X$-Postnikov towers is denoted $\mathsf {Post}_{\mathsf X}$.

A \emph{Postnikov presentation} of a model category $\cat M$ is  a pair of  sets of morphisms $\mathsf X$ and $\mathsf Z$ satisfying 
$$\mathsf {Fib}_{\cat M}=\widehat{\mathsf {Post}_{\mathsf X}}\quad\text { and }\quad \mathsf {Fib}_{\cat M}\cap \mathsf {WE}_{\cat M}=\widehat{\mathsf {Post}_{\mathsf Z}}$$ 
and such that for all $f\in \mor \cat M$,
there exist 
\begin{enumerate}
\item [(a)] $i\in \mathsf{Cof} $ and $p\in \mathsf {Post}_{\mathsf Z}$ such
that $f=pi$; 
\item [(b)] $j\in \mathsf{Cof}\cap \mathsf{WE}$ and $ q\in \mathsf {Post}_{\mathsf X}$ such that
$f=qj$.
\end{enumerate}
\end{defn}

\begin{rmk} For any $\mathsf X$, the set  $\mathsf{Post}_{\mathsf X}$ is closed under pullbacks, since inverse limits commute with pullbacks.  Furthermore, $\mathsf{Post}_{\mathsf X}$ is clearly closed under composition as well.
\end{rmk}

\begin{rmk}\label{rmk:postnikov-lift} Let $\mathsf X$ and $\mathsf Y$ be two classes of morphisms in  a category $\cat C$ admitting pullbacks and inverse limits. It is a straightforward exercise to show that if $\mathsf X\subset \mathsf{RLP(Y)}$, then $\mathsf {Post}_{\mathsf X}\subset \mathsf{RLP(Y)}$ as well, and therefore $\widehat{\mathsf {Post}_{\mathsf X}}\subset \mathsf{RLP(Y)}$. In particular, $(\mathsf {Fib}_{\cat M}, \mathsf {Fib}_{\cat M}\cap \mathsf {WE}_{\cat M})$ is always a Postnikov presentation of a model category $\cat M$.
\end{rmk}

\begin{cor} \label{cor:postnikov} Let $\cat M$ be a model category with Postnikov presentation $(\mathsf X, \mathsf Z)$.
Let $\cat C$ be a finitely bicomplete category, and let $F:\cat C\adjunct {}{}\cat M:G$ be an adjoint pair of functors.  Let 
$$\mathsf W=F^{-1}(\mathsf{WE}_{\cat M}) \text { and } \mathsf C=F^{-1}(\mathsf{Cof}_{\cat M}).$$   
If $\mathsf{Post}_{G(\mathsf Z)}\subset \mathsf W$ and for all $f\in \mor \cat C$
there exist 
\begin{enumerate}
\item [(a)] $i\in \mathsf{C} $ and $p\in \mathsf{Post}_{G( \mathsf Z)}$ such
that $f=pi$; 
\smallskip
\item [(b)]$j\in \mathsf{C}\cap \mathsf{W}$ and $q\in \mathsf{Post}_{G(\mathsf X)}$ such that
$f=qj$,
\end{enumerate}
then    $\mathsf W$, $\mathsf C$  and $\widehat{\mathsf{Post}_{G(\mathsf X)}}$ are the weak equivalences, cofibrations and fibrations in a model category structure on $\cat C$, with respect to which $F:\cat C \adjunct {}{}Ê\cat M: G$ is a Quillen pair.
\end{cor}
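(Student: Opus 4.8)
The plan is to apply Theorem \ref{thm:left-ind} directly, taking $\mathsf W=F^{-1}(\mathsf{WE}_{\cat M})$ and $\mathsf C=F^{-1}(\mathsf{Cof}_{\cat M})$ as given, and setting $\mathsf P=\mathsf{Post}_{G(\mathsf Z)}$ and $\mathsf Q=\mathsf{Post}_{G(\mathsf X)}$. Under this dictionary the factorization hypotheses (4)(a) and (4)(b) of Theorem \ref{thm:left-ind} are precisely the factorization hypotheses (a) and (b) assumed in the corollary, while hypothesis (5), $\mathsf P\subset\mathsf W$, is exactly the standing assumption $\mathsf{Post}_{G(\mathsf Z)}\subset\mathsf W$. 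It therefore remains only to check the structural hypotheses (1), (2) and (3), after which Theorem \ref{thm:left-ind} delivers the model structure with fibrations $\widehat{\mathsf Q}=\widehat{\mathsf{Post}_{G(\mathsf X)}}$, as claimed.

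First I would dispatch the formal conditions. All four classes are closed under composition and contain the identities: for $\mathsf W$ and $\mathsf C$ this is immediate because $F$ is a functor and the corresponding classes in $\cat M$ have these properties, and for $\mathsf P$ and $\mathsf Q$ it follows from the remark after Definition \ref{defn:postnikov} that $\mathsf{Post}_{(-)}$ is closed under composition and manifestly contains identities. Hypothesis (1), two-out-of-three for $\mathsf W$, holds because $F(gf)=F(g)F(f)$ and $\mathsf{WE}_{\cat M}$ satisfies two-out-of-three. Hypothesis (2), namely $\widehat{\mathsf W}=\mathsf W$ and $\widehat{\mathsf C}=\mathsf C$, holds because $\mathsf{WE}_{\cat M}$ and $\mathsf{Cof}_{\cat M}$ are retract-closed by axiom (M3) and $F$, being a functor, carries retract diagrams to retract diagrams.

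The heart of the argument, and the only step requiring genuine work, is hypothesis (3): that $\mathsf P\subset\mathsf{RLP}(\mathsf C)$ and $\mathsf Q\subset\mathsf{RLP}(\mathsf C\cap\mathsf W)$. By Remark \ref{rmk:postnikov-lift} it suffices to establish the corresponding containments for the generating maps, that is $G(\mathsf Z)\subset\mathsf{RLP}(\mathsf C)$ and $G(\mathsf X)\subset\mathsf{RLP}(\mathsf C\cap\mathsf W)$, since passing to Postnikov towers then preserves the right lifting property. These I would verify by transposing lifting problems across the adjunction $F\dashv G$: a map $G(z)$ admits lifts against $i\in\mathsf C$ if and only if $z$ admits lifts against $F(i)$, and $F(i)\in\mathsf{Cof}_{\cat M}$ by definition of $\mathsf C$. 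Because $(\mathsf X,\mathsf Z)$ is a Postnikov presentation we have $\mathsf Z\subset\widehat{\mathsf{Post}_{\mathsf Z}}=\mathsf{Fib}_{\cat M}\cap\mathsf{WE}_{\cat M}=\mathsf{RLP}(\mathsf{Cof}_{\cat M})$, so every $z\in\mathsf Z$ lifts against $F(i)$, giving $G(\mathsf Z)\subset\mathsf{RLP}(\mathsf C)$. Symmetrically, for $j\in\mathsf C\cap\mathsf W$ we have $F(j)\in\mathsf{Cof}_{\cat M}\cap\mathsf{WE}_{\cat M}$, while $\mathsf X\subset\widehat{\mathsf{Post}_{\mathsf X}}=\mathsf{Fib}_{\cat M}=\mathsf{RLP}(\mathsf{Cof}_{\cat M}\cap\mathsf{WE}_{\cat M})$, so every $x\in\mathsf X$ lifts against $F(j)$, giving $G(\mathsf X)\subset\mathsf{RLP}(\mathsf C\cap\mathsf W)$.

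With every hypothesis verified, Theorem \ref{thm:left-ind} produces the asserted model structure on $\cat C$. Finally, $(F,G)$ is a Quillen pair: by construction $F$ sends $\mathsf C$ into $\mathsf{Cof}_{\cat M}$ and $\mathsf C\cap\mathsf W$ into $\mathsf{Cof}_{\cat M}\cap\mathsf{WE}_{\cat M}$, so $F$ preserves both cofibrations and acyclic cofibrations, which by the standard characterization of Quillen pairs recorded above is equivalent to $G$ preserving fibrations and acyclic fibrations. I expect the lifting verification of hypothesis (3) to be the main obstacle, as it is the only place where the adjunction, the Postnikov-presentation identities, and the bootstrapping of Remark \ref{rmk:postnikov-lift} must be combined; all the remaining conditions are either assumed outright or are formal consequences of $F$ being a functor.
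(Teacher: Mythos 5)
Your proposal is correct and follows essentially the same route as the paper: the same dictionary $\mathsf P=\mathsf{Post}_{G(\mathsf Z)}$, $\mathsf Q=\mathsf{Post}_{G(\mathsf X)}$ into Theorem \ref{thm:left-ind}, the same formal dispatch of hypotheses (1), (2), (4), (5), and the same core verification of hypothesis (3) by transposing lifting problems across the adjunction (using $\mathsf Z\subset\widehat{\mathsf{Post}_{\mathsf Z}}=\mathsf{Fib}_{\cat M}\cap\mathsf{WE}_{\cat M}$ and $\mathsf X\subset\widehat{\mathsf{Post}_{\mathsf X}}=\mathsf{Fib}_{\cat M}$, exactly as the paper does implicitly) and then bootstrapping to Postnikov towers via Remark \ref{rmk:postnikov-lift}. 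The one place you diverge is the Quillen-pair claim: you observe that $F$ preserves cofibrations and acyclic cofibrations tautologically, since $\mathsf C=F^{-1}(\mathsf{Cof}_{\cat M})$ and $\mathsf W=F^{-1}(\mathsf{WE}_{\cat M})$, and invoke the standard characterization; the paper instead works on the right-adjoint side, using that $G$ preserves limits and retracts to show $G(\widehat{\mathsf{Post}_{\mathsf Z}})\subset\mathsf{RLP}(\mathsf C)$ and $G(\widehat{\mathsf{Post}_{\mathsf X}})\subset\mathsf{RLP}(\mathsf C\cap\mathsf W)$, i.e., that $G$ preserves fibrations and acyclic fibrations. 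Your version of this last step is shorter and equally valid; the paper's version has the side benefit of exhibiting explicitly how $G$ interacts with the Postnikov presentation, which is the mechanism reused elsewhere in the paper, but nothing in the corollary requires it.
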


Note that $(G(\mathsf X), G(\mathsf Z))$ is a Postnikov presentation of the left-induced model structure on $\cat C$.

\begin{proof} To obtain a left-induced model structure on $\cat C$, we need to show that hypotheses (1)-(5) of Theorem \ref{thm:left-ind} are satisfied, where $\mathsf P=\mathsf{Post}_{G(\mathsf Z)}$ and $\mathsf Q=\mathsf{Post}_{G(\mathsf X)}$.  Note that hypotheses (4)(a) and (b) are exactly the hypotheses (a) and (b) of this corollary, while hypothesis (5) is a hypothesis of this corollary as well.

Since $\mathsf{WE_{\cat M}}$ satisfies axiom (M2) for model categories, it is clear that $\mathsf W$ satisfies hypothesis (1) of Theorem \ref{thm:left-ind}.  Moreover, axiom (M3) for $\mathsf{WE_{\cat M}}$ and $\mathsf{Cof_{\cat M}}$ can easily be seen to imply hypothesis (2) of Theorem \ref{thm:left-ind}, as functors preserve retracts.

To prove (3)(a), consider first  a commuting diagram in $\cat C$ 
$$\xymatrix{ A\ar[r]^f\ar [d]^{i} & GE\ar[d]^{Gp}\\ X\ar[r]^g& GB,}$$
where $i\in \mathsf C$ and $p\in \mathsf Z$, which gives rises, via the adjunction between $F$ and $G$ to a commuting diagram in $\cat M$
$$\xymatrix{ FA\ar[r]^{f^\flat}\ar [d]^{Fi} & E\ar[d]^{p}\\ FX\ar[r]^{g^\flat}& B.}$$
Since $Fi\in \mathsf{Cof}_{\cat M}$ and $p\in \mathsf{Fib}_{\cat M}\cap \mathsf{WE}_{\cat M}$, axiom (M4)(a) implies that there is a morphism $h:FX\to E$ such that $p\circ h=g^\flat$ and $h\circ Fi= f^\flat$.  Applying the adjunction between $F$ and $G$ again, we obtain a commutative diagram
$$\xymatrix{ A\ar[r]^f\ar [d]^{i} & GE\ar[d]^{Gp}\\ X\ar[r]^g\ar [ur]^{h^\sharp}& GB}$$
 and thus
 $$G(\mathsf Z)\subset \mathsf{RLP(C)},$$
which implies by Remark \ref{rmk:postnikov-lift} that
 $$\mathsf{Post}_{G(\mathsf Z)} \subset \mathsf{RLP(C)}.$$
 Similarly,
  $$\mathsf{Post}_{G(\mathsf X)} \subset \mathsf{RLP(C\cap W)},$$
 i.e.,  hypothesis (3)(b) is satisfied as well.
 
 To see that the adjoint pair $(F,G)$ is a Quillen pair with respect to the newly defined model structure on $\cat C$, observe first that since $G$ is a right adjoint, it preserves limits.  Thus, the inclusion $G(\mathsf Z)\subset \mathsf{RLP(C)}$ implies, in conjunction with Remark \ref{rmk:retract}, that $G(\mathsf {Post}_{\mathsf Z})\subset \mathsf{RLP(C)}$.  Since $\mathsf{RLP(C)}$ is closed under taking retracts and $G$ preserves retracts, it follows that
 $$G(\mathsf{Fib}_{\cat M}\cap \mathsf{WE}_{\cat M})=G(\widehat{\mathsf {Post}_{\mathsf Z}})\subset \mathsf{RLP(C)}=\mathsf{Fib}_{\cat C}\cap \mathsf{WE}_{\cat C}.$$
 Similarly,
  $$G(\mathsf{Fib}_{\cat M})\subset \mathsf{RLP(C\cap W)}= \mathsf{Fib}_{\cat M}.$$ 
  \end{proof}
  
  \begin{rmk}  Let $\cat M$ be a model category with Postnikov presentation $(\mathsf X, \mathsf Z)$.
Let $\cat C$ be a bicomplete category, and let $F:\cat C\adjunct {}{}\cat M:G$ be an adjoint pair of functors.  Let 
$$\mathsf W=F^{-1}(\mathsf{WE}_{\cat M}) \text { and } \mathsf C=F^{-1}(\mathsf{Cof}_{\cat M}).$$

One can impose additional, reasonable conditions on the adjunction $(F,G)$ that guarantee that $\mathsf{Post}_{G(\mathsf Z)}\subset \mathsf W$.  For example, if 
\begin{enumerate}
\item for all $p: E\to B$ in $\mathsf Z$ and for all $g:Y\to GB$ in $\cat C$, 
$$F(Y\underset {GB}\times GE \xrightarrow {\bar p}Y)\in \mathsf{WE}_{\cat M},$$
where $\bar p$ is the induced morphism from the pullback of $Gp$ and $g$ to $Y$,
\item $F$ preserves inverse limits, and
\item the composition of a tower of weak equivalences in $\cat M$ is a weak equivalence, 
\end{enumerate}
then it is an easy exercise to show that $\mathsf{Post}_{G(\mathsf Z)}\subset \mathsf W$.
\end{rmk}
 \bibliographystyle{amsplain}
\bibliography{hhg}

\end{document}